\title{On definable groups in real closed fields with a generic derivation, and related structures}
\date{\today}
\author{Ya'acov Peterzil\thanks{Partially supported by  ISF grant 290/19}\\{University of Haifa} \and Anand Pillay \thanks{Partially supported by NSF grants DMS-1665035, DMS-1760212, and DMS-2054271}\\{University of Notre Dame}\and Fran\c{c}oise Point\thanks{Partially supported by Fonds de la Recherche Scientifique-FNRS-FRS}\\{University of Mons}}
\newtheorem{Theorem}{Theorem}[section]
\newtheorem{Proposition}[Theorem]{Proposition}
\newtheorem{Definition}[Theorem]{Definition}
\newtheorem{Remark}[Theorem]{Remark}
\newtheorem{Lemma}[Theorem]{Lemma}
\newtheorem{Corollary}[Theorem]{Corollary}
\newtheorem{Fact}[Theorem]{Fact}
\begin{document}
\maketitle

\begin{abstract}
We study finite-dimensional groups definable in models of the theory $RCF_{\partial}$ of real closed fields with a generic derivation (also known as $CODF$, the theory of closed ordered differential field \cite{Singer}). 
We prove that any such group $\Gamma$ definably embeds in a semialgebraic group $G$. 

We explain how our methods work in the general context of strongly model complete theories $T$ of large ``geometric" fields with a generic derivation, which includes the cases where $T$ is the theory of pseudofinite fields and $T = Th({\mathbb Q}_{p})$.  We also give a general theorem on recovering a definable group from generic data in the context of geometric theories.

Finally we extend the methods to $o$-minimal theories with a generic derivation, due to Fornasiero and Kaplan,  \cite{F-K}, and open theories of topological fields with a generic derivation, due to Cubides-Kovacsics and the third author, \cite{K-P}.

\end{abstract}

\section{Introduction}
The aim is to describe finite-dimensional definable groups in models of the theory $RCF_{\partial}$, also known as closed ordered differential fields.   In particular  any such group $G$ definably embeds in a group $H$ which is semialgebraic, namely definable in the restriction to the field language.

The case of infinite-dimensional groups is of course also interesting and will be considered in future work.

The proof goes through various stages.  First, in Section 3,  from a finite-dimensional group $\Gamma$ we obtain a ``generically defined" group in $RCF$, which we may call a semialgebraic pre-group.  Then, in Section 4,  we construct, from the generically defined group,  a definable group $G$ in $RCF$. Finally we show that $\Gamma$ definably embeds in $G$. 

 

Although the results and proofs are first presented for groups definable in models of $RCF_{\partial}$, they go through in various ways to more general or related contexts, as mentioned in the abstract.  Details are given below and  in subsequent sections. 

One of the reasons that we start with the  theory $RCF_{\partial}$ is that it is  fairly well-known and of interest to several researchers. 

In a paper in preparation we will adapt the theory of algebraic $D$-groups to some of the more general settings considered in this paper, obtaining in particular the notion of a Nash $D$-group (a Nash group over a real closed field with a derivation, equipped with a ``connection" respecting the group structure).  This will yield tighter descriptions of $\Gamma$ and $G$.


The theory $CODF$ of closed ordered differential fields was introduced by  Michael Singer \cite{Singer} as an expansion of the theory of real closed ordered fields obtained by adjoining a derivation $\partial$ and with axioms analogous to L. Blum's one variable axioms for $DCF_{0}$, bearing in mind the ordering. Singer worked in what he called the {\em language of ordered fields}, and his axioms for $CODF$ consisted of the axioms for real closed (ordered) fields, the axioms for ordered differential fields (i.e. fields equipped with an ordering and a derivation), as well as the following:

Let $f,g_{1},..,g_{m}$ be differential polynomials in one variable $y$, such that $f$ has order $n$ which is $\geq$  the  order of each  $g_{i}$. Suppose that, viewing $f$ and the $g_{i}$ as ordinary polynomials of at most $n+1$ variables, there is $(c_{0},..,c_{n})$ such that $f(c_{0},..,c_{n}) = 0$ and $g_{i}(c_{0},..,c_{n}) > 0$, for $i = 1,..,m$ and ${\partial}f /{\partial}y^{(n)}(c_{0},..,c_{n}) \neq 0$. Then,  viewing $f, g_i$ back as differential polynomials in one indeterminate, there is $z$ such that $f(z) = 0$ and $g_{i}(z) > 0$ for $i=1,..,m$. 

Singer proved in \cite{Singer} that $CODF$ is the model completion in the language of ordered fields of the theory $ODF$ of ordered differential fields, namely that $CODF$ is the model companion of $ODF$ and that whenever $K$ is a model of $CODF$ and $K_{0}\subseteq K$ is a substructure (so model of $ODF$) then $CODF$ together with the diagram of $K_{0}$ is complete. This precisely means that $CODF$ has quantifier elimination in the language of ordered differential fields. Let us stress here that by an ordered differential field, we mean simply a field equipped with both an ordering and a derivation where no additional relation between the ordering and derivation is assumed.  

It is more usual in the model theory of fields to work in the language of rings rather than fields (so in the language of ordered rings rather than ordered fields, when studying ordered fields). So we will work in the language of ordered rings,  and remark that it easily follows from Singer's results that $CODF$  also has quantifier elimination in this language.

Hence we can  redefine $CODF$ as follows:

\begin{Fact} (i) The (universal) theory of ordered differential integral domains, in the language of ordered differential rings,  has a model companion, which we call $CODF$.
\newline
(ii) $CODF$ is complete, with quantifier elimination (in the language of ordered differential rings). 
\end{Fact}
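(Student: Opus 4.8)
The plan is to deduce the Fact entirely from Singer's theorem quoted above by a standard change-of-language argument; the only real content is the passage from the language of ordered differential fields to that of ordered differential rings, carried out via fraction fields. The guiding observation is this: in the language of ordered differential rings a substructure of a model of $CODF$ is a subring closed under $\partial$ and carrying the induced ordering, and since it sits inside a field it has no zero-divisors. Thus the substructures are precisely the ordered differential integral domains. Conversely, every ordered differential integral domain $A$ occurs this way: its fraction field $F = \mathrm{Frac}(A)$ carries a unique ordering extending that of $A$ (via $a/b > 0 \iff ab > 0$) and a unique derivation extending $\partial$ (forced by the quotient rule $\partial(a/b) = (b\,\partial a - a\,\partial b)/b^{2}$), making $F$ an ordered differential field, and by Singer's model-companion result $F$ embeds in a model of $CODF$.

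For part (i) I would check the defining conditions of a model companion. The reduct of every model of $CODF$ to the ring language is an ordered differential field, hence an ordered differential integral domain; and by the previous paragraph every ordered differential integral domain embeds in a model of $CODF$. This gives the mutual model-consistency, shows that the universal part of $CODF$ in the ring language axiomatizes exactly the ordered differential integral domains, and identifies that universal theory with $CODF_{\forall}$. Together with model completeness, established via the quantifier elimination below, this yields that $CODF$ is the model companion of the (universal) theory of ordered differential integral domains.

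For the quantifier elimination in part (ii) I would use the substructure-completeness criterion: it suffices to show that for every ordered differential integral domain $A$ the theory $CODF \cup \mathrm{Diag}(A)$, with the diagram taken in the ring language, is complete. Given two models $M_{1}, M_{2} \models CODF$ containing $A$, each contains a copy of the fraction field $F$, and this copy is determined over $A$: each element $a/b$ is the unique solution of $b x = a$, and its order and derivative are fixed as above. Hence $M_{1}$ and $M_{2}$ share $F$ as a common substructure in the field language, compatibly with $A$. By Singer's theorem $CODF \cup \mathrm{Diag}_{L_{of,\partial}}(F)$ is complete, so $M_{1} \equiv_{F} M_{2}$ in the field language; since every ring-language formula over $A$ is in particular a field-language formula over $F \supseteq A$, we obtain $M_{1} \equiv_{A} M_{2}$ in the ring language. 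This proves quantifier elimination, and in particular model completeness. Completeness of $CODF$ itself then follows at once, since $\Q$ (equivalently $\Z$) with the zero derivation and standard ordering embeds into every model, so after quantifier elimination every parameter-free sentence reduces to a quantifier-free sentence in the constants $0,1$, whose truth value is absolute.

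The step requiring the most care is the claim that the copy of $F$ inside each $M_{i}$ is completely determined by $A$: this is precisely where the interaction of the derivation with division enters, and one must verify that both the quotient-rule extension of $\partial$ and the induced ordering are genuinely forced, so that $\mathrm{Diag}(A)$ in the ring language pins down $\mathrm{Diag}(F)$ in the field language. Once this is granted, everything reduces mechanically to the field-language statements Singer established, and no new differential-algebraic input is needed; this is why the Fact follows \emph{easily} from \cite{Singer}.
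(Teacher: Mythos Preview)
Your proposal is correct and is exactly the change-of-language argument the paper gestures at when it says the Fact ``easily follows from Singer's results'': the paper gives no further proof beyond that remark, so you have simply supplied the details of the fraction-field reduction it leaves implicit. Nothing to add.
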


 We could work just in the differential ring language, from which point of view $CODF$ is the model companion of the theory of formally real differential fields.  This is placed in a more general context by Tressl in his work on ``uniform model companions"  \cite{Tressl} and is relevant to some of our subsequent generalizations, so we give a few details.  The reader can see \cite{Tressl} for more references. 

From now on  $L$ will be the language of rings and $L_{\partial}$ the language of differential rings. We work throughout in characteristic $0$. 
A field $K$ is said to be {\em large} \cite{Pop} if any irreducible curve over $K$ with a smooth $K$-point has infinitely many $K$-points. The class of large fields is axiomatizable, and Tressl points out that if $T$ is a model complete theory of large fields in the language $L$, then the theory $T\cup \{{\partial}$ is a derivation$\}$ has a model companion, which we will call $T_{\partial}$. Actually Tressl works with several commuting derivations $\partial_{1},...,\partial_{m}$ but we are here just concerned with the case $m=1$. An important aspect of Tressl's work is that axioms can be given for $T_{\partial}$ which are uniform across $T$.  These can be formulated as so-called ``geometric axioms" (see Fact 2.8 and Remark 2.9 (2) of \cite{Leon-Sanchez-Pillay}) and will be discussed later.   But the connection with  $CODF$, is that real closed fields are large, and that the theory $RCF$ of real closed fields in the ring language is model-complete, whereby Tressl's theory gives us the model companion $RCF_{\partial}$ of $RCF \cup\{{\partial}$ is a derivation$\}$.  (8.2)(ii) of \cite{Tressl}, says:
\begin{Fact} $CODF$, when formulated in the language of differential rings coincides with $RCF_{\partial}$. 
\end{Fact}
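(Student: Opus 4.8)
The plan is to use the uniqueness of the model companion together with the definability of the ordering in real closed fields, so that the only difference between the two languages — the presence or absence of the order symbol — becomes inessential. The first point to record is that for a real closed field the ordering is existentially definable by $x \geq 0 \iff \exists y\,(y^2 = x)$ (and, by model-completeness of $RCF$ in the ring language, definable outright), and that a real closed field carries a unique ordering. Hence passing from the language of ordered differential rings to $L_\partial$ is harmless for the relevant structures: the $L_\partial$-reduct of a model of $CODF$ determines the original structure, and we may regard $CODF$ as an $L_\partial$-theory without loss, which I will call $CODF'$.

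Next I would invoke the standard facts that an inductive theory has at most one model companion, and that the models of the model companion are precisely the existentially closed models of the base theory. By Tressl's construction $RCF_\partial$ is the model companion of $RCF \cup \{\partial\}$, so its models are exactly the existentially closed real closed differential fields. On the other side, since the ordering is $\emptyset$-definable, a structure satisfies $CODF'$ if and only if its unique ordered expansion satisfies $CODF$, i.e.\ if and only if it is the $L_\partial$-reduct of an existentially closed ordered differential field. Thus it suffices to prove that the existentially closed real closed differential fields and the (reducts of the) existentially closed ordered differential fields are the same $L_\partial$-structures; for then the two model companions have the same models and hence coincide.

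The geometric content is a single cofinality lemma: every formally real differential field $(K,\partial)$ embeds in a real closed differential field. Indeed, the real closure $K^{rc}$ is again formally real, and since we are in characteristic $0$ the extension $K \subseteq K^{rc}$ is algebraic, hence separable, so $\partial$ extends uniquely to a derivation on $K^{rc}$; this yields a real closed differential field extending $(K,\partial)$. As every real closed differential field is a fortiori formally real, the class of real closed differential fields is contained in, and cofinal in, the class of formally real differential fields, both classes being inductive (closed under unions of chains). A routine back-and-forth with existential formulas then shows the two classes share the same existentially closed models: a structure existentially closed in the larger class satisfies the $\forall\exists$ axioms for real closedness (positive elements acquire square roots, and odd-degree polynomials acquire roots, already inside suitable formally real differential extensions, hence inside the structure itself), so it is real closed and its order is definable; conversely an existentially closed real closed differential field remains existentially closed among all formally real differential fields by factoring any given extension through its real closure. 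This identifies the existentially closed ordered differential fields with the existentially closed real closed differential fields, as needed.

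The technical heart, and the step I expect to require the most care, is precisely the unique extension of the derivation along the real closure combined with the verification that existential closure forces real-closedness; once these are in place, uniqueness of the model companion does the rest. We conclude that $CODF'$ and $RCF_\partial$ axiomatize the same class of $L_\partial$-structures, that is, $CODF$ formulated in the language of differential rings coincides with $RCF_\partial$.
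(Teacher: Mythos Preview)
Your argument is correct and is essentially the natural proof: use that the ordering is existentially $\emptyset$-definable in real closed fields, reduce to showing that the existentially closed formally real differential fields coincide with the existentially closed real closed differential fields, and establish the latter via the cofinality lemma (real closure plus unique extension of the derivation along an algebraic extension in characteristic $0$) together with the $\forall\exists$-axiomatizability of $RCF$. One small point you glide over is the passage from ``reducts of existentially closed \emph{ordered} differential fields'' to ``existentially closed \emph{formally real} differential fields''; this needs the observation that a quantifier-free formula in the ordered language becomes an existential $L_{\partial}$-formula over a real closed field, together with the fact that any ordering on a formally real extension of a real closed field restricts to the unique ordering below. Both are routine, but worth a sentence.

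As for comparison: the paper does not actually prove this Fact. It records, just before the statement, that in the differential ring language $CODF$ is the model companion of the theory of formally real differential fields, and then attributes the identification with $RCF_{\partial}$ to (8.2)(ii) of Tressl \cite{Tressl}. So you have supplied an argument where the paper simply cites the literature. Your proof is in the same spirit as Tressl's general machinery (cofinality of the model-complete theory inside the larger class, uniqueness of the model companion), specialized to the case at hand; there is no substantive divergence in strategy, only in that you have written out the details for this particular instance.
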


From now on we will use the expression $RCF_{\partial}$, instead of $CODF$, 
as it will be easier notation-wise to adjust to the variants $T_{\partial}$.  Even though one works in the differential ring language, there is of course a unique ordering which is definable, so it makes sense to talk about semialgebraic sets.
We may often say ``by quantifier elimination" when we mean quantifier elimination in the ordered differential ring language or ordered ring language, depending on the context. We hope there is no confusion for the reader.  

Our main result is:
\begin{Theorem} Let $\Gamma$ be a finite-dimensional group definable in a model $(K,\partial)$ of $RCF_{\partial}$. Then there is a group $G$ definable in the real closed field $K$, and a definable (in $(K,\partial)${\color{blue})} embedding of $\Gamma$ in $G$.
\end{Theorem}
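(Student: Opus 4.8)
The plan is to follow the three-stage roadmap the authors themselves lay out in the introduction: pass from the finite-dimensional $(K,\partial)$-definable group $\Gamma$ to a generically defined (semialgebraic pre-)group in the reduct $RCF$, then reconstruct a genuine $RCF$-definable group $G$ from that generic data, and finally produce the definable embedding of $\Gamma$ into $G$. The starting observation is the standard one for structures like $RCF_\partial$: because $\Gamma$ is finite-dimensional, a generic element $a$ of $\Gamma$ (over some small set of parameters) should have a finite ``differential transcendence'' profile, so that $a, \partial a, \partial^2 a, \ldots$ eventually satisfy a semialgebraic dependence. Concretely I would prolong: replace each coordinate variable $x$ by the tuple of its derivatives up to the order dictated by finite-dimensionality, and use quantifier elimination (Fact on QE for $RCF_\partial$ in the ordered differential ring language) to see that the $RCF_\partial$-type of $a$ is controlled by the $RCF$-type of the prolonged tuple $\nabla a = (a,\partial a,\dots,\partial^N a)$. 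This turns $\Gamma$, generically, into a subset of an algebraic variety over the real closed field $K$, with the group multiplication expressed generically by semialgebraic functions on these prolongations.

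The key technical step is then to verify that these semialgebraic functions define a \emph{generic group chunk} in the sense of the group-configuration / Weil--Hrushovski group-chunk philosophy, i.e.\ a semialgebraic pre-group: partial multiplication and inversion that are semialgebraically defined on generic tuples, associative wherever defined on independent generics, and satisfying the genericity relations that let one recover a group. Here I would lean on the paper's own promised general theorem ``on recovering a definable group from generic data in the context of geometric theories'': apply that machine to the pre-group extracted in Section~3 to manufacture an honest group $G$ definable in the pure real closed field $K$, whose generic type matches that of $\nabla\Gamma$. This is the step I expect to be the main obstacle, for two reasons. First, one must check that the multiplication really descends to the reduct — that no genuinely differential (non-semialgebraic) data is needed to express the generic group law, which is exactly where finite-dimensionality and the generic nature of $\partial$ (the $RCF_\partial$ axioms / QE) have to be exploited carefully. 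Second, the group-chunk reconstruction over a merely real closed (not algebraically closed, not even saturated in the naive sense) field requires the right notion of genericity and independence; I would use a suitable dimension/rank notion on $RCF$-definable sets to run the generic-element bookkeeping.

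Finally, with $G$ in hand, I would construct the embedding $\iota\colon\Gamma\to G$ and verify it is definable in the full structure $(K,\partial)$. The natural candidate is the map sending $\gamma\in\Gamma$ to (a generic-type-preserving image of) its prolongation $\nabla\gamma$, viewed as an element of $G$; one extends from generic elements to all of $\Gamma$ by the standard trick of writing an arbitrary element as a product of two independent generics, so that $\iota(\gamma)=\iota(\gamma\cdot g)\cdot\iota(g)^{-1}$ for a generic $g$ independent from $\gamma$, and checking this is well-defined and a homomorphism. Injectivity follows because the prolongation determines $\gamma$ (the coordinates include $\gamma$ itself), and definability-in-$(K,\partial)$ is automatic since $\nabla$ is given by the derivation. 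The homomorphism property and well-definedness reduce to the generic identities already secured in the group-chunk step, together with the genericity of the independent element $g$. The delicate point to check at this last stage is that the reconstructed $G$ and the chunk coming from $\Gamma$ share the \emph{same} generic type, so that $\iota$ lands in $G$ and respects multiplication globally and not merely generically; this is where the completeness of $RCF_\partial$ (Fact~2.2) and the uniqueness properties of the group-chunk construction do the work.
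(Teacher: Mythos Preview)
Your outline is correct and matches the paper's approach essentially step for step: prolong via $\nabla^{(N)}$ to get $\Gamma$ inside a semialgebraic set $Y$ with semialgebraic partial operations $*$, $inv$ (Proposition~3.3), verify $(Y,*,inv)$ is a generically defined group (Proposition~3.5), build $G$ as the group of germs of compositions $F_a\circ F_b$ on a large $Y_0\subseteq Y$ (Lemmas~4.2--4.5), and embed $\Gamma$ by writing each $\gamma$ as a product $ab$ of two $\partial$-generics and sending it to the germ of $F_a\circ F_b$ (Lemma~4.7). The one ingredient you gesture at but understate is the strengthened geometric axiom (Lemma~2.7): for any $L$-generic type $p$ of a rational $D$-variety $(V,s)$ there is a realization $a$ of $p$ with $\nabla(a)=s(a)$; this is what ensures that \emph{every} $L$-generic independent tuple in $Y$ can be realized inside $\Gamma$ (Lemma~3.4(iv)), which in turn is how one checks the pre-group identities for $(Y,*,inv)$ and the well-definedness of the embedding --- not ``completeness'' or ``uniqueness of the group-chunk construction'' as you put it at the end.
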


See Theorem 4.1 and Remark 4.8 in Section 4.

The corresponding result for {\em connected} groups of finite Morley rank definable in differentially closed fields is Lemma 1.1 of \cite{Pillay-differential-algebraic-groups}.  The methods there are stability-theoretic, and a main point of the current paper is to adapt the ideas to unstable situations where there are {\em many} ``types of maximal dimension". 
The corresponding result for arbitrary (possibly infinite Morley rank) groups definable in differentially closed fields, namely that they are embeddable in algebraic groups, appears in \cite{Pillay-foundational}, and the methods there will {\em not} adapt to the unstable contexts

We now want to state formally the generalizations which we will also prove. We will mention the contexts, state the results, then say something about the notation. There are three overlapping contexts.
In context (i), discussed in Section 5.2, $T$ is a theory of large geometric fields in the language $L$ of rings which is strongly model complete. $T_{\partial}$ is the model companion of $T\cup\{\partial$ is a derivation\}, in the language $L_{\partial} = L\cup\{\partial\}$. 

In context (ii), discussed in Section 6.1, $T$ is a complete, model-complete expansion of the theory of real closed fields, in a language $L$ which {\em  expands} the language of ordered rings. $T_{\partial}$ is the model companion of $T\cup\{\partial$ is a compatible derivation\}  in the language $L_{\partial} = L\cup\{\partial\}$.

In context (iii), discussed in Section 6.2, $T$ is an ``open theory of large topological fields" of characteristic $0$ in a language $L$ expanding the language of rings by constant and relation symbols. $T_{\partial}$ is the model companion of $T\cup\{\partial$ is a derivation\} in the language $L_{\partial} = L\cup \{\partial\}$. 

So note that the language $L$ (so also $L_{\partial}$)  may be different in these three contexts, and the existence of the relevant model companion $T_{\partial}$ is part of the literature. 
\begin{Theorem}
In each of the contexts (i), (ii), (iii), let $(K,\partial)$ be a model of $T_{\partial}$ (and $K$ a model of $T$ in the language $L$). Let $\Gamma$ be a group definable in the structure $(K,\partial)$. Then there is a definable (in $(K,\partial)$) embedding of $\Gamma$ in a group $G$ which is definable in the $L$-structure $K$. 
\end{Theorem}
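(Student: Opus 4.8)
The plan is to run, uniformly across the three contexts, the same three-stage strategy used for $RCF_{\partial}$ in Sections 3 and 4, after first isolating the structural features that all three settings share and that the argument actually consumes. These are: (a) a well-behaved dimension theory on $L$-definable sets (geometric dimension via $\mathrm{acl}$ in (i), $o$-minimal dimension in (ii), and the topological/large-field dimension in (iii)), enjoying additivity and the property that a definable set of maximal dimension meets its generic translates; and (b) the fact that $T_{\partial}$ is axiomatized by geometric axioms in the style of Tressl \cite{Tressl} (see also \cite{Leon-Sanchez-Pillay}), guaranteeing that a sufficiently generic $L$-point of a prolongation configuration over a differential substructure lifts to an $L_{\partial}$-point. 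From (b) I would record two consequences at the outset: the $L_{\partial}$-type of a finite tuple $a$ is controlled by the $L$-type of its prolongation $\nabla_n\colon x\mapsto(x,\partial x,\dots,\partial^n x)$ for a suitable order $n$, and every $L_{\partial}$-definable set is, through $\nabla_n$, the trace on the prolongation locus of an $L$-definable set. The essential structural input is then that $\Gamma$ is finite-dimensional, i.e. the higher derivatives of a generic element are generically determined by the lower ones; combined with the fact that the formula defining $\Gamma$ involves only finitely many derivatives, this lets $\nabla_n$ identify the domain of $\Gamma$ with (a subset of) an $L$-definable set $X$. This identification is exactly what the reduction consumes.

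Next I would carry out the Section~3 construction in this general setting. Passing to generic elements of $\Gamma$ in the sense of the dimension theory of $T$, and prolonging the $L_{\partial}$-definable multiplication $m\colon\Gamma\times\Gamma\to\Gamma$, I would produce \emph{generic group data} living purely in the $L$-structure $K$: an $L$-definable set of maximal dimension (the image of a large part of $\Gamma$ under $\nabla_n$) equipped with a partial, $L$-definable, generically associative multiplication. This is the ``semialgebraic pre-group'' of Section~3, now reread geometrically. The delicate point, and where unstability enters, is that in the stable $DCF$ model one exploits a single generic type together with its stationarity to render these partial operations canonical, whereas here there are many types of maximal dimension; I would instead argue directly with independent generics over a parameter set, checking that the prolonged multiplication is defined and associative off a set of strictly smaller dimension.

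I would then invoke the general theorem on recovering a definable group from generic data in geometric theories, promised in the introduction, namely a Weil--Hrushovski group-chunk theorem phrased for the abstract dimension theory above, to upgrade the generic group data to a genuine group $G$ definable in the $L$-structure $K$. Finally, to embed $\Gamma$, I would use the prolongation map $\nabla_n$ itself: it is $L_{\partial}$-definable and injective (the first coordinate recovers $x$), it lands inside the prolongation locus defining $G$ because prolongations of differential points automatically satisfy the differential conditions built into $G$, and by the previous step it transports the generic multiplication of $\Gamma$ to that of $G$. One then propagates from generics to all of $\Gamma$ by the standard device of writing an arbitrary element as a product of two suitably independent generics, yielding a global $L_{\partial}$-definable embedding $\Gamma\hookrightarrow G$.

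The main obstacle, and the heart of the matter, is this reconstruction step under unstability: manufacturing an honest $L$-definable group from merely generic, generically associative data in the absence of a single dominant generic type to anchor the construction. I expect the real work to be a group-chunk argument that relies \emph{only} on the common dimension-theoretic axioms --- additivity, definability of dimension, and transitivity of generic translation --- rather than on any stability-theoretic uniqueness or canonical bases. Correspondingly, the one context-sensitive verification will be that each of the three dimension theories genuinely satisfies the precise hypotheses demanded by this group-chunk theorem; once that is in place, the argument becomes uniform across (i), (ii) and (iii).
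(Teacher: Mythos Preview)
Your plan matches the paper's strategy exactly: replace $\Gamma$ by $\nabla^{(N)}(\Gamma)$ to land inside an $L$-definable set $Y$, extract a generically defined group $(Y,*,inv)$ in the $L$-structure, feed this to the group-chunk theorem for geometric structures (the paper's Theorem 5.4) to obtain the $L$-definable group $G$, and then embed $\Gamma$ via the product-of-generics device (the paper's Lemma 4.7). You have also correctly located the real difficulty in the group-chunk step under many generic types, and the paper handles it just as you suggest, purely via dimension/largeness rather than stationarity.

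One point is off and worth correcting before you write this up. The group $G$ has \emph{no} differential conditions built into it: it is constructed purely in the $L$-structure as a group of germs of the maps $F_a\circ F_b$ on a large subset $Y_0\subseteq Y$. In particular $\nabla_n$ does not land in $G$; it lands in $Y$. The passage from $Y_0$ to $G$ is the map $a\mapsto\text{germ}(F_a)$, and this is only a bijection on \emph{generics}. The actual embedding $h\colon\Gamma\to G$ is the one you describe in your last sentence: for arbitrary $\gamma\in\Gamma$ choose $\partial$-generic $a$ independent from $\gamma$, set $b=a^{-1}\gamma$, and send $\gamma$ to the germ of $F_a\circ F_b$; well-definedness, the homomorphism property, and injectivity are then checked against the partial operation $*$ on $Y$. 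So drop the sentence about $\nabla_n$ ``landing inside the prolongation locus defining $G$'' and make the germ construction explicit. A secondary remark: the paper's treatment is not literally uniform across the three contexts --- in (i) and (iii) the cover of $Y$ by rational $D$-varieties $(V_i,s_i)$ is used, whereas in (ii) $o$-minimal cell decomposition replaces this and the only varieties in sight are the affine spaces --- so expect the first stage to bifurcate in implementation even though the output (a generically defined group in $K$) is the same.
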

For context (i), geometric fields were introduced in \cite{Hrushovski-Pillay-groupslocalfields}, and strongly model complete is also known as ``almost QE" and appears in \cite{BCPP}.  Anyway, see Theorem 5.11. As mentioned in the abstract,  context (ii) appears in \cite{F-K} and details will be given in Section 6.1. See Theorem 6.8. Likewise context (iii) appears in \cite{K-P} and more details appear in Section 6.2.  See Theorem 6.12.  Each of these contexts subsumes the special case of $RCF_{\partial}$ and more examples will also be given below. 

Finally in this introduction, we mention another generalization of one aspect of the proof of Theorem 1.3 and which plays a role in the proofs of Theorem 1.4. This is the construction of a definable group from a generically definable group, in the context of an arbitrary geometric structure, and extends the ``group chunk" methods from \cite{Weil}. 
Geometric structures were again introduced in \cite{Hrushovski-Pillay-groupslocalfields}. 
\begin{Theorem} Let $M$ be a geometric structure and let $(X, F, g)$ be a generically defined group on the  definable set $X$ (all in the sense of $M$). Then there is a definable map $h$ of $X$ into a definable group $G$, with various properties, including that the images under $h$ of the generically defined multiplication $F$ and inversion $g$ on $X$ are multilplication and inversion on $G$. Also  $h(X)$ is ``large" in $G$. 
\end{Theorem}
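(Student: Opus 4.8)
The plan is to adapt Weil's group chunk construction to the dimension theory of a geometric structure, replacing the stable notion of a unique generic type by the coarser, dimension-theoretic notion of a \emph{large} (co-small-dimensional) definable set. Throughout, write $n = \dim X$, call a tuple \emph{generic} (over the relevant parameters) if it has maximal dimension, and call a definable subset of $X^k$ \emph{large} if its complement has dimension $< kn$. The hypothesis that $(X,F,g)$ is a generically defined group means exactly that $F$ and $g$ are defined on large sets and that the group identities $F(F(a,b),c)=F(a,F(b,c))$, $F(g(a),F(a,b))=b$, and so on hold for generic independent arguments. The exchange property and the definability of dimension in a geometric structure are what make ``generic'' and ``large'' well-behaved: genericity is preserved under the partial operations, a large subset meets a generic fibre in a large set, and a finite intersection of large sets is large.

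First I would set up the coset representation of the group to be built. Each intended element of $G$ should be thought of as a formal quotient $a^{-1}b$, so I take $G$ to be the definable quotient of a large subset $W \subseteq X \times X$ (the pairs $(a,b)$ with $a,b$ generic and independent) by the equivalence relation $(a,b) \sim (a',b')$ defined, using $F$ and $g$, to express that $a^{-1}b$ and $a'^{-1}b'$ agree generically. Showing that $\sim$ is a definable equivalence relation with large domain, and that $G$ can be taken to be an honest definable set (working in $M^{\mathrm{eq}}$, or via a definable choice of representatives on large pieces), is the first place where the geometric-structure hypotheses — elimination of $\exists^\infty$ for uniform finiteness, and exchange for the dimension calculus — are essential.

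Next I would define the multiplication on $G$. Given classes $[(a,b)]$ and $[(c,d)]$, genericity lets me re-represent the second class with first coordinate equal to $b$ (solving the relevant $F$-equations generically), after which the product is declared to be $[(a,d')]$ with $d'$ the new second coordinate, mirroring $a^{-1}b\cdot b^{-1}d = a^{-1}d$. Well-definedness, associativity, and the existence of an identity and of inverses then all reduce, by the dimension calculus, to the generic group axioms satisfied by $(X,F,g)$: each required identity holds on a large set, finitely many such large sets intersect in a large set, so the identity holds generically and hence, being an equation between definable functions of the representatives, holds wherever it must on $G$. Finally I would define $h\colon X\to G$ by $h(x)=[(a,F(a,x))]$ for $a$ generic and independent from $x$, verify that the class does not depend on the generic choice of $a$ (two choices give $\sim$-equivalent pairs by a generic associativity computation), that $h$ carries $F$ and $g$ to the multiplication and inversion of $G$, that $h$ is generically injective, and that $h(X)$ is large in $G$.

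The main obstacle I anticipate is precisely the absence of a unique generic type. In the stable setting one works with germs of the translations $x\mapsto F(a,x)$ at \emph{the} generic type, and two parameters yield the same germ iff the functions agree on that type, a type-definable (and in good cases definable) condition. Here there are many types of maximal dimension, so ``agreeing generically'' must be phrased purely dimension-theoretically — agreeing off a set of dimension $< n$ — and one must show that this relation is genuinely definable and that all the generic identities can be intersected and transported without ever invoking a distinguished type. Keeping every auxiliary set large under the repeated substitutions and fibrewise arguments, so that no required identity is lost to a lower-dimensional exceptional locus, is the technical heart of the argument.
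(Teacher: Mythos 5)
Your construction is, in substance, the paper's own: your formal quotients $a^{-1}b$ modulo ``generic agreement'' are exactly the paper's germs of composites of translations (the paper uses the germs of $F_a\circ F_b$ rather than $F_{g(a)}\circ F_b$, an immaterial change of convention), definability of the germ equivalence comes from definability of dimension, multiplication is handled by generic re-representation (the paper's collapsing lemma: for $a$ generic over $x$, both $F_x\circ F_a$ and $F_a\circ F_x$ have the germ of a single $F_b$ with $b\in \mathrm{dcl}(x,a)$), and your $h(x)=[(a,F(a,x))]$ is the paper's map sending $x$ to the germ of $F_x$. One repairable imprecision: the set of pairs $(a,b)$ with $a,b$ generic and independent over the base is \emph{not} definable (genericity over a set is a property of types, not a first-order condition on tuples). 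The paper first carves out a definable large ``group chunk'' $Y_0\subseteq X$ --- the set of $x$ such that the generic identities involving $x$ hold for a \emph{large} set of the remaining variables --- which is definable precisely because dimension is definable in a geometric structure (its Lemma 4.2), and then quotients $Y_0\times Y_0$. You have the ingredients for this; it just needs to be done explicitly, since all later computations must take place on this definable set.

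The genuine gap is in your last step, producing a \emph{definable} group. The theorem (and the paper's intended applications) require $G$ to be definable in $M$ itself, and a general geometric structure need not eliminate imaginaries, nor does it admit definable choice; so ``work in $M^{\mathrm{eq}}$'' proves a weaker statement, and ``a definable choice of representatives on large pieces'' is precisely the thing that has to be constructed --- it is not available off the shelf. The paper's device (its Lemmas 4.3(iv), 4.4, 4.5) is concrete: fix $\alpha>2\dim X$ independent generic elements $t_1,\dots,t_\alpha$ of $Y_0$; given any $a,b\in Y_0$, some $t_i$ is generic over $(a,b)$, and then the germ of $F_a\circ F_b$ equals the germ of $F_{t_i}\circ F_d$ for some $d\in Y_0$ which is \emph{uniquely determined} by $t_i$ and the germ (left cancellation for germs of the form $F_{t}\circ F_d$). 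Hence the finitely many charts $\{t_i\}\times Y_0$ cover $G$, each injectively in the second coordinate, and after definably removing overlaps one realizes $G$ as an honest definable set of tuples. This also forces the parameter bookkeeping in the statement: the $t_i$ are new parameters, so $G$ is defined over some $B\supseteq A$ in general, with $B=A$ achievable when $A$ is an elementary substructure. Without this (or some substitute for it), your argument only yields an interpretable group, which falls short of the statement.
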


See Section 5.1 for the precise definitions and Theorem 5.4 there.  

Our model-theoretic and differential-algebraic notation is standard. But one can see the volume \cite{Bouscaren} for a lot of background on these topics. In particular the article by the second author on the model theory of algebraically closed fields gives an account of algebraic varieties which is relevant to the current paper. 

We will typically be working in one-sorted structures, mainly fields equipped with a derivation.  
In this context ${\bar a}, {\bar b}$,... will denote finite tuples from the ``home sort" and likewise for finite tuples of variables. But nevertheless when there is no ambiguity, 
we may let $a,b,..$ denote finite tuples from the universe, and $x,y,..$ finite tuples of variables. 

Let us say a few words about the possible interest or relevance of work in this paper. There may be a sense that when working with ordered or valued fields one should really posit some additional compatabilities between the derivation and ordering (or valuation), as for example is done in asymptotic differential algebra. Nevertheless there is also interest and work in ``real differential algebra" and even ``real differential Galois theory" where no such additional compatibilities are assumed. Likewise in the $p$-adic case. See for example \cite{CHvdP}. 
So establishing ``universal domains" for differential algebra over real, $p$-adic (and even characteristic $0$ pseudofinite) fields seems a worthwhile enterprise and this is what is accomplished in a fairly general context by Tressl's ``uniform model companion".  In the background may be also new kinds of semialgebraic $ODE$'s such as logarithmic differential equations on Nash groups, with their own Galois theory, but this remains to be seen.

There is also 	quite a bit of work on definable groups in various kinds of dense pairs, such as 
\cite{Baro-Martin-Pizarro}. As for example $RCF_{\partial}$ is an expansion of the theory of dense pairs of real closed fields, we expect both the results and methods of the current paper to impact this kind of work. 

Let us discuss briefly some connections of the work in this paper to other recent work, which were pointed out to us kindly by Pantelis Eleftheriou and Silvain Rideau-Kikuchi  after a first version of this paper was posted.   We focus on the $RCF_{\partial}$ case. Our first step, carried out in  Section 3, is to construct from a finite-dimensional definable group $\Gamma$ in $RCF_{\partial}$ a ``generically defined  group"  (see Definition 5.2) on a definable set in $RCF$.  See Proposition 3.5.  The second step is to produce from the generically defined group in $RCF$ a ``group chunk" on a definable set in $RCF$. See Lemma 4.2. The third step is to produce from the ``group chunk" an actual definable group $G$ in $RCF$. See Lemma 4.5. The fourth step is to definably (in $RCF_{\partial}$) embed $\Gamma$ in $G$.  See Lemma 4.7.  The second and third steps are put in the general context of geometric structures in our Section 5. 

In \cite{Pantelis}, a group chunk theorem appears in a certain axiomatic framework (of a finite-valued definable dimension on definable sets), yielding from a group chunk (in basically the same sense as ours) a definable group. This is Theorem 2.2 of \cite{Pantelis}.  The axiomatic framework includes ``geometric structures", so also $RCF$, $o$-minimal structures, pseudofinite fields, $p$-adically closed fields,...  Hence we could have appealed to this theorem for our third step, after checking and stating  various compatibilities, although the proof of our fourth step depends on the coherence of the constructions and notation from our steps one, two and three. 

Silvain Rideau-Kikuchi pointed out a couple of relevant papers \cite{Metastable} and \cite{Silvain-SCF}.  In Section 3.4 of \cite{Metastable}, a certain result is proved, Proposition 3.15, which after some degree of translation would, we understand,  recover a type-definable group from a ``generically given group" on a certain partial type $\pi$ where $\pi$ has some definability properties. 
This is closely related to the combination of our second and third steps (the content of our Section 5), although we produce a {\em definable} (rather than type-definable) group via the construction of the {\em definable} group chunk (in our sense).  

Although \cite{Silvain-SCF} is also related (see Proposition 4.1 there), it cannot be directly applied to our setting


In any case we  have decided to keep the material in the current paper as is. For example we are not able to directly quote any results from \cite{Metastable}, and a technical discussion about how those results may or may not  be modified suitably would take us rather outside our paper's scope.

\vspace{2mm}
\noindent
{\bf Acknowledgements.} In addition to the grant support mentioned above, all authors would like the thank the Fields Institute, Toronto, for their hospitality in Fall 2021 when this work began. In particular the second author would like to thank the Simons Foundation and the Fields Institute for a Simons Distinguished Visitor position during this period. The second author would also like to thank Imperial College for its hospitality in the summer of 2022 with a Nelder Visiting Fellowship.

\section{Preliminaries and geometric axioms}

In this section we work with the complete theory $RCF_{\partial}$

We take $L$ to be the the language of rings $\{+,\times, -, 0, 1\}$ and $L_{\partial}$ the language of differential rings $\{+,\times, -, 0, 1, \partial\}$. 

We will write a model $M$ of $RCF_{\partial}$ as $(K,\partial)$ where $K$ is a field, necessarily real closed, and $\partial$ is a derivation.
A saturated model of $RCF_{\partial}$ (or universal domain) will be written as $({\cal U}, \partial)$.  So note that ${\cal U}$ is a saturated real closed  field. 


For $A$ a subset of $\mathcal U$, by an $(L_{\partial})_{A}$-definable set we mean a set definable over $A$ in the structure $({\mathcal U}, \partial)$, so defined by an $L_{\partial}$-formula with parameters from $A$.  
By an $L_{A}$-definable set  over $A$ we mean the same thing but in the reduct of $\mathcal U$ to $L$, namely defined in the structure $\cal U$ by an $L$-formula over $A$.  So the latter is the same as a semialgebraic set over $A$. 
As usual we denote by $k({\bar a})$ the field generated by $k$ and ${\bar a}$  (where $k$ is a subfield of ${\cal U}$ and ${\bar a}$ a tuple from ${\cal U}$), and we let $k\langle{\bar a}\rangle$ denote the differential field generated by $k$ and ${\bar a}$ where ${\bar a}$ is as before and typically $k$ is a differential subfield of $({\cal U}, \partial)$.  Note that $k\langle{\bar a}\rangle
$ is precisely $k({\bar a}, \partial({\bar a}),..., \partial^{(m)}({\bar a}) ,....)$, where $\partial^{(m)} = \partial\circ ...
\circ\partial$ (m times). If ${\bar a} = (a_{1},...,a_{n})$, $\partial ^{(m)}({\bar a})$ denotes 
$(\partial^{(m)}(a_{1}),...,\partial^{(m)}(a_{n}))$. 
We also sometimes let $a^{(m)}$ denote $\partial^{(m)}(a)$, and likewise for  ${\bar a}^{(m)}$. 

\begin{Fact}
(i) Any $(L_{\partial})_{A}$-formula $\phi(\bar x)$ is equivalent, modulo $RCF_{\partial}$, to a formula of the form $\theta({\bar x},\partial({\bar x}),....,\partial^{m}({\bar x}))$ for some $m$ and $L_{A}$-formula  $\theta({\bar x}_{0},..,{\bar x}_{m})$. 
\newline
(ii) For any $A\subset {\mathcal U}$ the model theoretic algebraic closure of $A$ in the structure $({\mathcal U}, \partial)$ is the same as the field-theoretic relative algebraic closure of the differential subfield of $\mathcal U$ generated by $A$. Hence, by virtue of the ordering, the model-theoretic definable closure of $A$ in $({\mathcal U},\partial)$ is the same as the definable closure in the real closed field ${\mathcal U}$ of the differential field generated by $A$. 
\newline
(iii) If $X$, $Y$ are $(L_{\partial})_{A}$- definable sets in $({\mathcal U}, \partial)$ and $f:X\to Y$ is $(L_{\partial})_{A}$ definable then we can partition $X$ into finitely many $(L_{\partial})_{A}$-definable sets $X_{i}$ such that for each $i$, $f|X_{i}$ is of the form $g({\bar x},\partial({\bar x}),....,\partial ^{(m)}({\bar x}))$ for some $m$ and  $L_{A}$-definable function $g$. 
\end{Fact}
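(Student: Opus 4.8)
All three parts rest on quantifier elimination for $RCF_{\partial}$ in the (ordered) differential ring language, recalled just above, and I would prove them in the order (i), (ii), (iii), since each feeds the next. For (i) the plan is purely syntactic. By quantifier elimination, $\phi(\bar x)$ is equivalent modulo $RCF_{\partial}$ to a quantifier-free formula, i.e. a Boolean combination of conditions $t(\bar x)=0$ and $t(\bar x)>0$ where each $t$ is an $L_{\partial}$-term over $A$, hence a differential polynomial over $A$. Every such differential polynomial in $\bar x$ is literally an ordinary polynomial in the prolongations $\bar x,\partial\bar x,\dots,\partial^{m}\bar x$ once $m$ is chosen to bound the orders of all the occurrences of $\partial$; rewriting the ordering by its ($L$-definable) description in real closed fields then exhibits the whole formula as $\theta(\bar x,\partial\bar x,\dots,\partial^{m}\bar x)$ for an $L_{A}$-formula $\theta$. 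I expect this step to be routine.

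For (ii) I would separate the two inclusions for $\mathrm{acl}$ and then read off the $\mathrm{dcl}$ statement. Write $k=\langle A\rangle$ for the differential field generated by $A$, and note $\mathrm{acl}(A)=\mathrm{acl}(k)$ since $k\subseteq\mathrm{dcl}(A)$. The inclusion ``relative field-algebraic closure $\subseteq\mathrm{acl}$'' is immediate: an element field-algebraic over $k$ has finitely many field conjugates, a fortiori finitely many conjugates in $(\mathcal U,\partial)$. For the reverse inclusion I argue contrapositively: if $b$ lies outside the relative algebraic closure $F$ of $k$ (equivalently, $b$ is field-transcendental over $k$, since $\mathcal U$ is real closed and $F$ is the real closure of $k$ in $\mathcal U$), I must produce infinitely many realizations of $\mathrm{tp}(b/k)$. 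By quantifier elimination this type is determined by the quantifier-free $L$-type of the jet $(b,\partial b,\partial^{2}b,\dots)$ over $k$; the substantive input is the genericity axioms, which, given the minimal differential polynomial of $b$ over $F$ (of order $n\ge 1$, with $\partial/\partial y^{(n)}$ nonvanishing on the jet) together with the open sign conditions realized by the jet, supply differential solutions realizing the same quantifier-free $L$-type. Since a field-transcendental element determines a cut over $k$ that is realized by infinitely many elements, this yields infinitely many conjugates and hence $b\notin\mathrm{acl}(A)$. The $\mathrm{dcl}$ statement then follows formally: in any expansion of a linear order a finite definable set is pointwise definable (as the $i$-th smallest element), so $\mathrm{acl}=\mathrm{dcl}$ both in $(\mathcal U,\partial)$ and in the reduct $\mathcal U$; combining this with the $\mathrm{acl}$ computation and with the fact that $\mathrm{dcl}_{\mathcal U}(k)$ is the real closure of $k$ gives the claim.

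For (iii) the plan is to combine (i), (ii) and compactness. Given the definable function $f$, for each $\bar x\in X$ the value $f(\bar x)$ lies in $\mathrm{dcl}_{(\mathcal U,\partial)}(A\bar x)$, so by (ii) it lies in the real-closed-field definable closure of $k$ together with the jet of $\bar x$; thus there are some $m$ and some $L_{A}$-definable $g$ with $f(\bar x)=g(\bar x,\partial\bar x,\dots,\partial^{m}\bar x)$. For a fixed pair $(m,g)$ the set of $\bar x$ for which this holds is $(L_{\partial})_{A}$-definable (the prolongations being $L_{\partial}$-definable), and as $\bar x$ varies these sets cover $X$; by saturation finitely many suffice, and disjointifying them produces the required partition $X=\bigcup_{i}X_{i}$ with $f|X_{i}=g_{i}(\bar x,\partial\bar x,\dots,\partial^{m}\bar x)$.

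The genuine content — and the only place I expect real work — is the reverse inclusion in (ii): converting ``field-transcendental over $k$'' into ``infinitely many conjugates'' is exactly where one must invoke the density/genericity axioms of $CODF$ rather than soft model theory, and the bookkeeping needed to check that the perturbed solutions realize the \emph{same} complete differential type as $b$ (and not merely agree on a low-order jet) is the delicate point. This is, however, essentially contained in Singer's analysis, so for the write-up I would keep it brief and cite accordingly.
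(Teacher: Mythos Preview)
Your proposal is correct and follows essentially the same approach as the paper: (i) is unwound quantifier elimination, (ii) reduces to showing that a field-transcendental element over the differential field $k$ generated by $A$ has infinitely many conjugates via the Singer axioms and compactness/saturation, and (iii) is compactness together with (ii). The paper's own ``Explanation'' is in fact far terser than yours --- it simply points to QE for (i), to the Singer axioms plus compactness/saturation for (ii), and to QE, (ii), and compactness for (iii) --- so your write-up is already more detailed. One small remark: in your sketch of (ii) you speak of ``the minimal differential polynomial of $b$ over $F$ (of order $n\geq 1$)'', which tacitly assumes $b$ is differentially algebraic over $k$; you should also cover the (easier) differentially transcendental case, where the axioms let you realize any finite jet freely.
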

\noindent
{\em Explanation.} (i) is precisely quantifier elimination. (ii) follows from the Singer axioms for $CODF$, namely if $k$ is a (small) differential subfield of ${\mathcal U}$ and $a\in {\mathcal U}$ is not algebraic over $k$ in the field-theoretic sense then we conclude from the axioms and compactness/saturation that there are infinitely many elements in ${\mathcal U}$ with the same $L_{\partial}$ type as $a$ over $k$.
(iii) follows from quantifier elimination ,(ii),  and compactness.       \qed

\vspace{2mm}
\noindent
Typically $k, K, ...$ will denote small differential subfields of $({\mathcal U}, \partial)$. The following are exactly as in $DCF_{0}$:
\begin{Definition} 
(i) Let ${\bar a}$ be a finite tuple, and $k$ a differential subfield of $({\mathcal U}, \partial)$. By $order({\bar a}/k)$ we mean the transcendence degree over $k$ of $k\langle{\bar a}\rangle$  if this is finite, and $\infty$ otherwise. We will also write $dim_{\partial}({\bar a}/k)$ for $order({\bar a}/k)$ when the latter is finite. 
\newline
(ii) Let $X$ be definable over $k$. Then $order(X) = max\{order({\bar a}/k):{\bar a}\in X\}$ if this is finite and $\infty$ otherwise. We say that $X$ is ``finite-dimensional"  if $order(X)$ is finite. In this case we also write $dim_{\partial}(X)$ for $order(X)$. 
\newline
(iii) For $X$ definable over $k$ and finite-dimensional, and ${\bar a}\in X$ we will call $\bar a$ ${\partial}$-generic  in $X$ over $k$ if $dim_{\partial}({\bar a}/k) = dim_{\partial}(X)$. 
\newline
(iii) Let ${\bar a}$ and ${\bar b}$ be tuples (even infinite)  from ${\mathcal U}$. We will say that ${\bar a}$ is $\partial$-independent from ${\bar b}$ over $k$, if $k\langle{\bar a}\rangle$ is independent from  $k\langle{\bar b}\rangle$ over $k$ in the sense of fields.
\end{Definition}

We can and will distinguish $dim_{\partial}$, $\partial$-genericity, and $\partial$-independence from dimension, genericity, independence in $RCF$ by referring to the latter as $dim_{L}$, $L$-genericity, $L$-independence, when we want to avoid ambiguity. More will be said later, especially about the connection to the corresponding notions in algebraic geometry. When we want to distinguish types in the sense of $RCF$ and types in the sense of $RCF_{\partial}$, we will write $tp_{L}(...)$ and $tp_{L_{\partial}}(...)$ (or $tp_{\partial}(...)$ for brevity).


We will be talking about algebraic varieties $V$ over $\cal U$ or over (possibly differential) subfields $k$ of $\cal U$. 
First by a $k$-variety $V$ (or variety over $k$), we will usually mean an  {\bf affine} variety defined by polynomial equations over $k$ and which is $k$-irreducible. We will explain below why we will take in addition $V$ to be absolutely irreducible (also called geometrically irreducible).    But a warning is that we will attach to such $V$ another object $T_{\partial}(V)$ which is a possibly reducible affine variety over $k$. 
Secondly our notational conventions are that, in contrast to \cite{BCR} (where varieties are subsets of $K^{n}$, $K$ a real closed field), we identify a variety $V$ over $k$ with the ``functor" taking fields $F$ containing $k$ to the set $V(F)$ of solutions of the set of equations over $k$ defining $V$.
So from this point of view $V$ can be identified with its set of $F$ points for $F$ an algebraically closed field containing $k$.  On the other hand we will be concerned with sets of points $V(K)$ of $V$ in real closed fields $K< {\cal U}$ containing $k$ or in ${\cal U}$ itself. $V({\cal U})$ will of course be a set definable in ${\cal U}$ over $k$. 


There is no harm in assuming (as we will) that the base field $k<{\cal U}$ over which $V$ is defined is real closed.  The varieties we consider will typically arise by choosing a tuple ${\bar a}$ from ${\cal U}$ and defining $V$ to be the affine variety over $k$ defined by all the polynomial equations over $k$ vanishing at $\bar a$.  But then $V(k)$ will be Zariski dense in $V$ (as $k\prec {\cal U}$) and so in fact $V$ will be absolutely irreducible, not only $k$-irreducible. 
Let us mention in passing that from \cite{Pop}, the largeness of a field $k$ (such as a real closed field) is equivalent to any $k$-variety $V$ with a smooth $k$ point having a Zariski-dense set of $k$-points.



Given a $k$-variety $V$, 
the {\em algebro-geometric dimension} of $V$ is precisely the maximum of $trdeg(k(a)/k)$ where $a$ is a point of $V$ in some {\em algebraically closed} field $F$ containing $k$. We call such $a$ (witnessing the algebro-geometric dimension of $V$) a Zariski-generic  point of $V$ over $k$. 
To say that $V({\cal U})$ is Zariski-dense in $V$ means precisely that we can find a Zariski-generic point of $V$ over $k$ in $V({\cal U})$. So to summarize:

\begin{Remark} Assume $V$ is a $k$-variety such that $V({\cal U})$ is Zariski-dense. Then the algebro-geometric dimension of $V$ coincides with the $o$-minimal dimension of $V({\mathcal U})$. Hence for points $a\in V({\cal U})$ being Zariski-generic over $k$ and $o$-minimal generic  over $k$ coincide.
\end{Remark}


Now suppose $V$ to be a $k$-variety
From differential algebra we have the notion of the prolongation of $V$, also called the ``shifted tangent bundle of $V$", 
which will be a possibly reducible variety defined by polynomial equations over $k$. We denoted this by  $T_{\partial}(V)$, which comes equipped with a projection  $\pi:T_{\partial}(V) \to V$. $T_{\partial}(V)$ is also called $\tau(V)$ in the iterature (such as  \cite{Marker-Manin}). (So as mentioned earlier  there is a slight conflict with our assumption that all varieties we consider are irreducible.)

Here are the details. 
Assume that $V$ is a $k$-variety in affine $n$-space, and let $I_{k}(V)$ be the ideal in $k[x_{1},..,x_{n}]$ consisting of polynomials vanishing on $V$.  For each polynomial $P(x_{1},..,x_{n})\in I_{k}(V)$, consider the
equation over $k$ in variables $x_{1},..,x_{n}, u_{1},..,u_{n}$, 
$$\sum_{i=1,..,n}(\partial P/\partial x_{i})({\bar x})u_{i} + P^{\partial}({\bar x}) = 0$$  

Here $P^{\partial}$ is the polynomial over $k$ obtained from $P$ by applying the derivation $\partial$ to the coefficients of $P$.

Then $T_{\partial}(V)$ is the possibly reducible subvariety of affine $2n$-space defined by all these displayed equations, as well as $P({\bar x}) = 0$, as $P$ ranges over $I_{k}(V)$.  And $\pi$ is the projection to the first $n$-coordinates.

\begin{Definition} By a rational $D$-variety $(V,s)$ defined over a field $k$, we mean a $k$-variety $V$ equipped with a rational section, defined over $k$ from $V$ to $T_{\partial}(V)$. This means that $s$ is given by a system of 
quotients $P/Q$  of polynomial functions over $k$ in $n$ indeterminates, such that working in an ambient algebraically closed field, $s$ is defined on a nonempty Zariski open (over $k$) subset $U$of $V$ and for $a\in U$, $s(a)\in T_{\partial}(V)$ and $\pi(s(a)) = a$. 
\end{Definition}


When $s$ is a regular map, namely everywhere defined on $V$, we have the notion of an algebraic $D$-variety, as  defined explicitly by Buium \cite{Buium} in the general differential algebraic context, but with geometric origins in the notion of an Ehresmann connection. In the literature $V$ is often assumed to be smooth. So we state a fact, just for the record, although it will not be needed for the rest of the paper: 
\begin{Remark}  (i) When $V$ is smooth, then $T_{\partial}$ is irreducible (so also a $k$-variety) as well as smooth, with $dim(T_{\partial}(V)) = 2dim(V)$. 
\newline
(ii) In general, $T_{\partial}(V)$ may be reducible, with dimension $> 2dim(V)$. But nevertheless it is smooth and irreducible over the smooth locus $V_{sm}$ of $V$. Namely (again working in an ambient algebraically closed field), $\pi^{-1}(V_{sm})$ is a smooth irreducible quasi-affine variety over $k$. 
\newline
\end{Remark}
\begin{proof} 
(i) is (1.5) of \cite{Marker-Manin}.
\newline
(ii) The proof of (1.4) in \cite{Marker-Manin} gives smoothness of the quasi-affine variety  $\pi^{-1}(V_{sm})$, and the proof of (1.5) of \cite{Marker-Manin} gives irreducibility. 
\end{proof}

Now ${\mathcal U}$ embeds in a differentially closed field $({\bar{\cal U}},\partial)$
and the  geometric axioms for $DCF_{0}$ imply that one can find a generic point $a$ of $V({\bar{\cal U}})$ over $k$, such that $s(a) = (a,\partial(a))$. 

Lemma 1.6 of \cite{BCPP} says that the analogous statement holds for $RCF_{\partial}$:

\begin{Fact} Under the current assumptions  ($k$ a differential subfield of $\mathcal U \models RCF_{\partial}$, $(V,s)$ a rational  $D$-variety over $k$ and with $V({\mathcal U})$ Zariski-dense), there is 
$a\in V({\cal U})$, a generic point of $V$ over $k$ in the algebro-geometric sense, such that $s(a) = (a,\partial(a))$. 
\end{Fact}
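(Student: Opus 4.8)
The plan is to reduce the statement to the geometric axioms for $RCF_{\partial}$ applied to a suitable irreducible subvariety of $T_{\partial}(V)$. The first observation is that for \emph{any} differential point $a\in V({\cal U})$ one automatically has $(a,\partial(a))\in T_{\partial}(V)$: if $P\in I_{k}(V)$ then $P(a)=0$, so applying $\partial$ and using the Leibniz/chain rule gives $\sum_{i}(\partial P/\partial x_{i})(a)\,\partial(a_{i}) + P^{\partial}(a) = \partial(P(a)) = 0$, which is exactly the defining equation of $T_{\partial}(V)$ at $(a,\partial(a))$. Thus the content of the Fact is not membership in $T_{\partial}(V)$, but rather that the second coordinate can be forced to equal the prescribed section value $s(a)$, at a point $a$ that is algebro-geometrically generic and lies in the domain of $s$.

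Let $U\subseteq V$ be the nonempty Zariski-open subset over $k$ on which $s$ is defined, and set $W := \overline{s(U)}\subseteq T_{\partial}(V)$, the Zariski closure of the image. Since $U$ is irreducible and $s$ is a $k$-morphism on $U$, $W$ is an irreducible $k$-subvariety of $T_{\partial}(V)$; and because $s$ is a section ($\pi\circ s = id$), the map $s\colon U\to s(U)$ is an isomorphism with inverse $\pi|_{s(U)}$, so $\pi|_{W}\colon W\to V$ is dominant and birational, with $dim(W)=dim(V)$. I would next check that $W({\cal U})$ is Zariski-dense in $W$: by hypothesis $V({\cal U})$ is Zariski-dense in $V$ and $k\prec{\cal U}$ is real closed, so $V({\cal U})\cap U$ is Zariski-dense in $U$; since $s$ restricts to a homeomorphism from $U$ onto the dense subset $s(U)\subseteq W$, the images of these points stay Zariski-dense in $W$ and lie in $W({\cal U})$. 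In particular $W({\cal U})$ meets the smooth locus $W_{sm}$, so $W$ has a smooth ${\cal U}$-point, which supplies the nondegeneracy (smoothness) condition underlying Singer's axioms.

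With $W$ in hand I would invoke the geometric axioms for $RCF_{\partial}$ (the ordered analogue of the Pierce--Pillay axioms for $DCF_{0}$): since $W\subseteq T_{\partial}(V)$ is irreducible, projects dominantly onto $V$, and $W({\cal U})$ is Zariski-dense, there is $a\in V({\cal U})$ with $(a,\partial(a))\in W$ and $(a,\partial(a))$ generic in $W$ over $k$. As $\pi|_{W}$ is birational with $dim(W)=dim(V)$, such $a$ is generic in $V$ over $k$; being generic it avoids the proper $k$-closed set $V\setminus U$, hence $a\in U$, and over $U$ the birationality of $\pi|_{W}$ forces the unique point of $W$ above $a$ to be $s(a)$. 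Therefore $(a,\partial(a))=s(a)$, as required. The main obstacle is precisely the invocation of the geometric axioms in the \emph{ordered} setting: the genuine work, carried out in Lemma 1.6 of \cite{BCPP}, is to show that Singer's one-variable axioms are equivalent to this many-variable prolongation statement \emph{with the Zariski-density/real-point hypothesis playing the role that largeness plays in the $DCF_{0}$ case}, so that the produced point lies in ${\cal U}$ itself rather than merely in the differential closure $\bar{\cal U}$. Granting that reformulation, the only remaining real-algebraic point is the Zariski-density of $W({\cal U})$ verified above.
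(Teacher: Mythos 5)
Your reduction to a Pierce--Pillay-style statement is sound as far as it goes: the graph closure $W=\overline{s(U)}\subseteq T_{\partial}(V)$ is indeed an irreducible $k$-variety with $\pi|_{W}$ birational onto $V$, the set $W(\mathcal{U})$ is Zariski-dense in $W$ (so $W$ has smooth $\mathcal{U}$-points), and any $a$ with $\nabla(a)\in W$ and $\nabla(a)$ generic in $W$ over $k$ would satisfy $\nabla(a)=s(a)$ with $a$ generic in $V$. The problem is the step you explicitly ``grant'': the existence of such a point is exactly the content of the Fact, and the source you cite for it --- Lemma 1.6 of \cite{BCPP} --- \emph{is} the Fact; that is precisely how the paper introduces it (Fact 2.6 is stated as a quotation of that lemma, with no proof given). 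So as written your argument is circular: you translate the $(V,s)$-formulation into an equivalent formulation about subvarieties $W$ of $T_{\partial}(V)$, and then discharge that formulation by citing the statement being proved. What a self-contained proof must supply --- and what your proposal never supplies --- is the mechanism that actually produces a point of $\mathcal{U}$ satisfying the differential condition.

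That mechanism is exhibited in the paper's proof of the strengthening, Lemma 2.7 (which immediately yields Fact 2.6, since an $L$-generic type of $V(\mathcal{U})$ over $k$ exists by Zariski-density, and Remark 2.3 identifies $L$-generic with Zariski-generic): realize a generic point $a$ of $V$ in a $|k|^{+}$-saturated real closed field $K$ extending $k$; writing $s(a)=(a,b)$, use the extension theorem for derivations (Theorem 1.1 of \cite{Pierce-Pillay}) --- this is where the defining equations of $T_{\partial}(V)$ enter --- to extend $\partial|_{k}$ to a derivation of $k(a)$ sending $a$ to $b$, then extend further to all of $K$; finally, $(K,\partial^{**})$ is a real closed field with a derivation, hence embeds over $(k,\partial)$ into $(\mathcal{U},\partial)$ by model-companionship and quantifier elimination of $RCF_{\partial}$, and the image of $a$ is the required point. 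Your route could be made non-circular only by citing the geometric axiomatization of $RCF_{\partial}$ from a source independent of the statement (e.g.\ Fact 2.8 and Remark 2.9(2) of \cite{Leon-Sanchez-Pillay}), together with a compactness/saturation step to upgrade ``a point in every $k$-definable Zariski-open subset of $W$'' (which is what such axiom schemes assert) to ``a point generic in $W$ over $k$''; but note that proving those axioms is again the derivation-extension argument above, so nothing is saved by the detour through $W$.
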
 

In fact, the conclusion characterizes (saturated) models of $RCF_{\partial}$.  Let us note that if ${\cal U}$ was replaced by a not necessarily saturated model $(K,\partial)$ of $RCF_{\partial}$ containing $(k,\partial)$, then the conclusion of Fact 2.6 would be that for every Zariski open subset $U$ of $V$, defined over $k$  there is $a\in U(K)$ with $s(a)$ defined and equal to $(a,\partial(a))$. 

The main thing we want to do now is to strengthen  Fact 2.6. We will need dimension in the $o$-minimal or semialgebraic sense as well as various compatibilities with dimension in the algebro-geometric sense (as in Remark 2.3).





We now return to our (saturated) model $({\mathcal U},\partial)$ of  $RCF_{\partial}$. We again let $k$ be (small) differential subfield of ${\cal U}$. Given a semialgebraic set $X$ defined over $k<\mathcal U$, let $a$ be an $L$-generic point of $X$ over $k$, and let $p(x) = tp_{L}(a/k)$. We call $p$ an $L$-generic type of $X$ over $k$.  Here is the strengthening of Fact 2.6, which can again be considered as giving {\em stronger} ``geometric axioms" for $RCF_{\partial}$.

\begin{Lemma} Let $k$ be a differential subfield of ${\mathcal U}$, Let $(V,s)$ be a rational $D$-variety over $k$ ($V$ affine, irreducible, $V({\mathcal U})$ Zariski-dense). Let $p(x)$ be an $L$-generic type of $V({\mathcal U})$ over $k$.  Then there is a realization $a$ of $p$ such that $s(a) = (a,\partial(a))$.
\end{Lemma}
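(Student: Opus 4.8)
The plan is to build the desired realization by hand as an ordered differential field extension of $(k,\partial)$ and then transport it into ${\mathcal U}$ via quantifier elimination. The point that is genuinely new compared with Fact 2.6 is that we must realize the \emph{specific} $L$-type $p$, i.e. land on the prescribed ``side'' of $V$, rather than merely some algebro-geometric generic point; the construction below does this essentially for free, because imposing $\partial a = s(a)$ introduces no new transcendentals and hence does not disturb the $L$-type.

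First I would fix a realization $a_0$ of $p$ inside $V({\mathcal U})$, working in the reduct ${\mathcal U}$ to $L$; such $a_0$ exists by saturation. Since $p$ is $L$-generic and $V({\mathcal U})$ is Zariski-dense, $a_0$ is a Zariski-generic point of $V$ over $k$ by Remark 2.3, hence lies in every nonempty Zariski-open subset of $V$ defined over $k$; in particular $a_0$ lies in the domain of the rational section $s$, so $s(a_0)=(a_0,v_0)$ is defined with $\pi(s(a_0))=a_0$. I would then define a derivation $\partial'$ on $k(a_0)$ extending $\partial|_k$ by declaring $\partial' a_0 = v_0$. The crucial verification is that $\partial'$ is well defined: for each $P\in I_{k}(V)$ one needs $\sum_i (\partial P/\partial x_i)(a_0)\,(v_0)_i + P^{\partial}(a_0)=0$, and this is exactly the assertion that $(a_0,v_0)\in T_{\partial}(V)$, which holds because $s$ maps into $T_{\partial}(V)$. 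This is the standard fact relating prolongations and derivations, and it is the technical heart of the argument.

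Next I would observe that, once $\partial' a_0 = v_0\in k(a_0)$ is imposed, the chain rule forces $(\partial')^{(m)}(a_0)\in k(a_0)$ for every $m$; hence $k\langle a_0\rangle = k(a_0)$ and the quantifier-free $L_{\partial}$-type of $a_0$ over $k$ is completely determined by its quantifier-free $L$-type together with the equations $\partial x = s(x)$. Now $(k(a_0),\partial')$, with the ordering inherited from ${\mathcal U}$, is an ordered differential field, hence a model of the universal theory whose model companion is $RCF_{\partial}$; so it embeds over $(k,\partial)$ into a model of $RCF_{\partial}$. Since $RCF_{\partial}$ has quantifier elimination and $RCF_{\partial}$ together with the diagram of $(k,\partial)$ is complete, the type $q=tp_{L_{\partial}}(a_0/k)$ is consistent, and being a type over the small set $k$ it is realized in the saturated model by some $a$. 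By construction $q$ restricts to $p$ (same quantifier-free $L$-type, and $RCF$ has QE) and contains $\partial x = s(x)$, so $a\models p$ and $s(a)=(a,\partial(a))$, as required.

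I expect the main obstacle to be precisely the coordination of these two features: checking that the prolongation equations make $\partial'$ well defined while simultaneously recognizing that this same construction leaves the $L$-type untouched, which is what upgrades Fact 2.6 to the statement for the prescribed type $p$. The remaining passage through the model-companion property, quantifier elimination, and saturation is then routine.
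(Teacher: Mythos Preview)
Your proposal is correct and follows essentially the same route as the paper: realize $p$ in some model of $RCF$, use the $T_{\partial}(V)$-equations to extend $\partial$ from $k$ to $k(a_0)$ by setting $\partial' a_0 = v_0$, and then invoke model-companion plus quantifier elimination to transport the resulting ordered differential field into $({\mathcal U},\partial)$ over $k$. The only cosmetic difference is that the paper first passes to an auxiliary $|k|^{+}$-saturated model $K\models RCF$ (and extends the derivation to all of $K$) before embedding, whereas you realize $a_0$ directly in the $L$-reduct of ${\mathcal U}$ and work just with $k(a_0)$; this avoids any possible confusion between your new $\partial'$ and the ambient $\partial$ on ${\mathcal U}$, but is otherwise the same argument.
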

\begin{proof}  First we may assume that $k$ is real closed. Let us fix a $|k|^{+}$-saturated model $K$ of $RCF$ extending $(k,+,\times,<)$.  So $p(x)$ is a complete type over $k$ in the sense of  $K$, so is realized by  some $a\in V(K)$.  By Remark 2.3, $a$ is Zariski-generic in $V$ over $k$.  Let $a = (a_{1},..,a_{n})$.  Let $s(a) = (a_{1},..,a_{n},b_{1},..,b_{n})$ so by definition we have that 
$\sum_{i=1,..,n}((\partial P/\partial x_{i})(a_{1},..,a_{n}))b_{i} + P^{\partial}(a_{1},..,a_{n}) = 0$ for all $P\in I_{k}(V)$.  By the extension theorem for derivations (Theorem 1.1 of \cite{Pierce-Pillay}) we can extend the derivation $\partial$ on $k$ to a derivation $\partial^{*}$ on the  field $k(a_{1},..,a_{n})$ by defining $\partial^{*}(a_{i}) = b_{i}$ for $i=1,..,n$.  Now extend $\partial^{*}$ to a derivation $\partial^{**}$ of $K$. So $(K, \partial^{**})$ is a model of ``$RCF$ + $\partial$ is a derivation", hence embeds in a model of $RCF_{\partial}$, so without loss of generality into $({\cal U}, \partial)$.  As $RCF_{\partial}$ has quantifier elimination in the ordered {\color{blue} differential} ring language (and $(k,\partial)$ is a differential field with $k$ real closed), $(K,\partial^{**})$ embeds in the 
ordered differential field ${\mathcal U}$ over $k$. The image of $a$ under this embedding works.
\end{proof}

\section{Definable groups of finite dimension}


We continue to use notation as at the beginning of Section 2. $({\cal U}, {\partial})$ still denotes a saturated model of $RCF_{\partial}$ in the language $L_{\partial}$. 


\begin{Definition} (i) If $a$ is a finite tuple from ${\mathcal U}$, and $N\geq 1$ then by $\nabla^{(N)}(a)$ we mean the
tuple  $(a,\partial(a),..,\partial^{(N)}(a))$. We write $\nabla(a)$ for  $\nabla^{(1)}(a) = (a,\partial(a))$.
\newline
(ii) For $X\subseteq {\cal U}^{n}$ a $L_{\partial}$-definable set, by $\nabla^{(N)}(X)$ we mean $\{\nabla^{(N)}(a):a\in X\}$. It is also a definable set, defined over the same parameters as $X$.
\end{Definition}

For $X$ as in Definition 3.1 (ii), the map $\nabla^{(N)}$ clearly gives a bijection between $X$ and $\nabla^{(N)}(X)$. When $X = \Gamma$ is a definable group (over a differential subfield $k$), then $\nabla^{(N)}$ induces a group structure on $\nabla^{(N)}(\Gamma)$ also defined over $k$. 

\begin{Fact}
Suppose $k$ is a (small) differential subfield of $({\cal U}, \partial)$, let ${\bar a}\in {\cal U}$ and suppose that 
$\partial^{(n)}({\bar a})$ is in the field-theoretic algebraic closure of 
\newline
$k({\bar a}, \partial({\bar a}),..,\partial^{(n-1)}
({\bar a}))$. Then $\partial^{(n+1)}({\bar a})\in k({\bar a},\partial({\bar a}),..,\partial^{(n-1)}({\bar a}), \partial^{(n)}({\bar a}))$.  
\newline
So also  $\partial^{(m)}({\bar a})\in k({\bar a},\partial({\bar a}),..,\partial^{(n-1)}({\bar a}), \partial^{(n)}({\bar a}))$  for all $m>  n$ (and trivially, for all $m\leq n$).  
\end{Fact}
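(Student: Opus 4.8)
The plan is to reduce everything to a single clean statement: that $L := k({\bar a}, \partial({\bar a}), \ldots, \partial^{(n)}({\bar a}))$ is a \emph{differential} subfield of $({\mathcal U}, \partial)$. Granting this, both assertions are immediate, since ${\bar a} \in L$ together with closure of $L$ under $\partial$ gives $\partial^{(m)}({\bar a}) \in L$ for every $m \geq 0$; this contains the displayed membership $\partial^{(n+1)}({\bar a}) \in L$ as well as the ``so also'' clause for all $m > n$.

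To set up, I would write $K := k({\bar a}, \ldots, \partial^{(n-1)}({\bar a}))$, so that $L = K(\partial^{(n)}({\bar a}))$, and first record the inclusion $\partial(K) \subseteq L$. This holds because $K$ is generated over $k$ by the tuples ${\bar a}, \ldots, \partial^{(n-1)}({\bar a})$, whose images under $\partial$ are $\partial({\bar a}), \ldots, \partial^{(n)}({\bar a})$, all of which lie in $L$, while $\partial(k) \subseteq k \subseteq L$; the Leibniz and quotient rules then propagate $\partial(K) \subseteq L$ from the generators to every rational expression in them. It is worth stressing that $K$ itself is typically \emph{not} a differential field, since $\partial^{(n)}({\bar a})$ need not lie in $K$; this is exactly why one must pass to $L$.

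The crucial step is to show $\partial(\partial^{(n)}({\bar a})) = \partial^{(n+1)}({\bar a}) \in L$, and here I would invoke the hypothesis that $\partial^{(n)}({\bar a})$, hence each of its coordinates $\alpha$, is algebraic over $K$. Taking the minimal polynomial $f(X) \in K[X]$ of such an $\alpha$ and applying $\partial$ to $f(\alpha) = 0$ yields $f'(\alpha)\,\partial(\alpha) + f^{\partial}(\alpha) = 0$, where $f'$ is the formal $X$-derivative and $f^{\partial}$ applies $\partial$ to the coefficients. Since those coefficients lie in $K$, their $\partial$-images lie in $L$ by the previous step, so both $f'(\alpha)$ and $f^{\partial}(\alpha)$ lie in $L$; and because we work in characteristic $0$ the irreducible $f$ is separable, whence $f'(\alpha) \neq 0$. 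Solving gives $\partial(\alpha) = -f^{\partial}(\alpha)/f'(\alpha) \in L$, and running this over all coordinates gives $\partial^{(n+1)}({\bar a}) \in L$. At this point every generator of $L$ over $k$ has its $\partial$-image in $L$, so $L$ is closed under $\partial$, which is what was wanted.

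I do not anticipate a serious obstacle: the heart of the matter is the standard differential-algebra fact that the derivative of a separable algebraic element is determined inside the field it generates. The one point demanding care is that $K$ is not assumed to be a differential field, so the separability computation cannot be concluded by directly quoting ``algebraic extensions of differential fields are differential'', but must instead be carried out relative to the larger field $L$, using the preliminary inclusion $\partial(K) \subseteq L$.
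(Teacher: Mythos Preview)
Your argument is correct and is essentially the same as the paper's: both take the minimal polynomial of a coordinate of $\partial^{(n)}({\bar a})$ over $K$, differentiate the vanishing relation, and use that the formal derivative does not kill the root (by minimality/separability in characteristic $0$) to solve for $\partial^{(n+1)}$ inside $L$. Your framing of the conclusion as ``$L$ is a differential subfield'' is a slightly cleaner way to obtain the ``so also'' clause in one stroke, and your explicit verification that $\partial(K)\subseteq L$ makes precise the step the paper compresses into ``we then compute that $a^{(n+1)}(\partial P/\partial x)(a^{(n)})\in k({\bar a},\ldots,{\bar a}^{(n)})$''.
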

\begin{proof} Let $a$ be a coordinate of the tuple ${\bar a}$ and we show that  $a^{(n+1)}\in k({\bar a},\partial({\bar a}),..,\partial^{(n-1)}({\bar a}), \partial^{(n)}({\bar a}))$. Let  $P(x)$ be the minimal (monic) polynomial of $a^{(n)}$  over $k({\bar a},..,\partial^{(n-1)}({\bar a}))$. 
Suppose $P$ has degree $m$.  We have that $P(a^{(n)}) = 0$.
So $\partial(P(a^{(n)})) = 0$.  We then compute that  $a^{(n+1)}(\partial P/\partial x)(a^{(n)}) \in k({\bar a},...,{\bar a}^{(n-1)}, {\bar a}^{(n)})$. But $\partial P/\partial x$ is a polynomial over $k({\bar a},..,{\bar a}^{(n-1)})$ of degree $< m$, whereby   
$(\partial P/\partial x)(a^{(n)}) \neq 0$ so we can divide by it to get $a^{(n+1)}\in k({\bar a},...,{\bar a}^{(n)})$ as required.

\end{proof}

The first aim is to find a reasonably canonical description of finite dimensional sets $X$ and groups $\Gamma$ definable in $({\cal U},\partial)$ over $k$, up to definable bijection (isomorphism) over $k$.  The definable bijection will simply replace $\Gamma$ by $\nabla^{(N)}(\Gamma)$ for suitable $N$.  This is an  adaptation of well-known  constructions in $DCF_{0}$, but with a few subtleties.


\begin{Proposition}\label{prop3.3}  (I) Let $X\subseteq {\cal U}^{n}$ be a finite-dimensional definable set in $({\cal U},\partial)$, defined over the small differential subfield $k$ of ${\cal U}$  (which may be assumed to be real closed). Then, after replacing $X$ by $\nabla^{(N)}(X)$ for suitable $N$  (and replacing $n$ by  $n(N+1)$),  there is a semialgebraic subset $Y$ of ${\cal U}^{n}$ defined over $k$ and irreducible affine subvarieties $V_{1},..,V_{r}$ of affine $n$-space, defined over $k$,  with Zariski-dense sets of $k$-points (or ${\cal U}$-points), and for each $i=1,..,r$, a rational section $s_{i}:V_{i}\to T_{\partial}(V_{i})$ defined over $k$, and Zariski open $U_{i}$ of $V_{i}$ defined over $k$ on which $s_{i}$ is defined, such that
\newline
(i) $Y$ is covered by the $U_{i}({\cal U})$,
\newline
(ii) For each $i$, $dim(V_{i}) = dim(Y \cap U_{i}({\cal U}))$, and 
\newline
(iii) $X = \cup_{i=1,..,r}\{a\in Y\cap U_{i}({\cal U}): \nabla(a) = s_{i}(a)\}$.

\vspace{2mm}
\noindent
(II) Suppose moreover that $X = \Gamma$ is a finite-dimensional definable group (defined again over $k$). Then, again replacing $\Gamma$ by the definably over $k$ isomorphic group  $\nabla^{(N)}(\Gamma)$ for suitable $N$, we have,  (with $\Gamma$ in place of $X$)
\newline
(iv) There is a partial semialgebraic (over $k$) function $*:Y\times Y \to Y$, whose restriction to $\Gamma\times\Gamma$ is precisely the group operation on $\Gamma$.
\newline
(v) There is a partial semialgebraic (over $k$) function $inv: Y\to Y$  whose restriction to $\Gamma$ is precisely group inversion. 

\end{Proposition}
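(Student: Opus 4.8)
The plan is to first reduce, via the replacement $X \mapsto \nabla^{(N)}(X)$, to the situation where the derivative of every point is a \emph{rational} function of that point over $k$, and then to read off the data $(V_i,s_i,Y)$ from the finitely many irreducible components of a Zariski closure, using an induction on dimension to obtain the exact equality in (iii). For the reduction, fix $a\in X$ and set $t_j=\operatorname{trdeg}(k(\nabla^{(j)}(a))/k)$. The sequence $(t_j)$ is nondecreasing and bounded by $d:=\dim_{\partial}(X)$; at the first level $n(a)$ with $t_{n(a)}=t_{n(a)-1}$ the block $\partial^{(n(a))}(a)$ is algebraic over $k(\nabla^{(n(a)-1)}(a))$, so by Fact 3.2 the sequence is constant from $n(a)$ on and every higher derivative of $a$ lies rationally in $k(\nabla^{(n(a))}(a))$. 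Since the $t_j$ increase strictly by at least $1$ before stabilizing, $n(a)\le d+1$ uniformly in $a$. Hence for $N:=d+1$ and $b:=\nabla^{(N)}(a)$ every coordinate of $\partial(b)=(\partial a,\dots,\partial^{(N+1)}a)$ lies in $k(b)$: the first $N$ blocks are coordinates of $b$, and the last, $\partial^{(N+1)}(a)$, is in $k(b)$ by Fact 3.2. Replacing $X$ by $\nabla^{(N)}(X)$ and relabelling, I may assume that for every $b\in X$ the tuple $\partial(b)$ is a $k$-rational function of $b$.

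Next I produce the $V_i$ and $s_i$. By Fact 2.1(iii) I partition $X$ into finitely many $L_{\partial}$-definable pieces on each of which $b\mapsto\partial(b)$ is semialgebraic over $k$. Working at the top $o$-minimal dimension $d'=\dim_L(X)$: for a piece $P$ whose Zariski closure $V$ over $k$ has dimension $d'$, pick a Zariski-generic point $b_0$ of $V$ lying in $P$. As $\partial(b_0)\in k(b_0)$, there is a $k$-rational section $s$ of $T_{\partial}(V)$ with $\nabla(b_0)=s(b_0)$ — that $s(b_0)\in T_{\partial}(V)$ is exactly the defining identity of $T_{\partial}(V)$, since $\partial$ annihilates $I_k(V)$ at $b_0$. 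The semialgebraic locus $\{b:\nabla(b)=s(b)\}$ is defined over $k$ and contains the Zariski-generic $b_0$, hence is Zariski-dense in $V$; so the semialgebraic defining function of $\partial$ on $P$ agrees with $s$ off a set of dimension $<d'$. I take these $V$, each with its section $s$ (the same $V$ may recur with different sections, coming from different pieces), to be among the $V_i,s_i$, and let $U_i$ be a $k$-open on which $s_i$ is defined. By Remark 2.3 each $V_i(\mathcal U)$, hence $V_i(k)$ (as $k\prec\mathcal U$), is Zariski-dense, so the $V_i$ are absolutely irreducible.

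To assemble $Y$ and obtain (i)--(iii), write a top-dimensional piece as $\{b:\chi(\nabla^{(M)}(b))\}$ for an $L$-formula $\chi$; substituting the $k$-rational expressions for the $\partial^{(j)}(b)$ forced by $\nabla(b)=s_i(b)$, the piece equals $Y_i\cap\{b:\nabla(b)=s_i(b)\}$, where $Y_i:=\{b\in U_i:\chi(b,s_i(b),\dots)\}$ is $k$-semialgebraic, and $\dim(Y_i\cap U_i)=\dim V_i$ because $Y_i\supseteq P$ is Zariski-dense in $V_i$. The points of $X$ not captured at dimension $d'$ form an $L_{\partial}$-definable set of dimension $<d'$ still satisfying $\partial(b)\in k(b)$, so applying the construction by induction on dimension yields finitely many further $V_j,s_j,Y_j$ and terminates. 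Setting $Y=\bigcup_j Y_j$ gives a $k$-semialgebraic set with $X\subseteq Y\subseteq\bigcup_i U_i$, and collecting the identities gives $X=\bigcup_i\{a\in Y\cap U_i:\nabla(a)=s_i(a)\}$; the only care needed is to trim the $Y_j$ by removing overlaps so no spurious point of $Y\cap U_i$ satisfies $\nabla(a)=s_i(a)$. I expect this exact recovery to be the main obstacle: in the presence of many top-dimensional types a single $V_i$ may carry several distinct rational sections, and the semialgebraic description of $\partial$ matches a rational section only off a lower-dimensional set; it is precisely the genuine $k$-\emph{rationality} (not mere algebraicity) of $\partial(b)$ after the $\nabla^{(N)}$-replacement, together with the dimension induction, that closes this up.

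For part (II), after the same replacement the operation $\mu$ and inversion on $\Gamma$ are $L_{\partial}$-definable maps whose arguments have $\partial$-rational coordinates. By Fact 2.1(iii) each is piecewise semialgebraic over $k$, and since $\partial(a),\partial(b)\in k(a,b)$ the value $a\cdot b=\mu(a,b)$ lies in $\mathrm{dcl}_L(a,b)$ on each piece, so $\mu$ is given there by a single $k$-semialgebraic function; likewise for inversion. Patching over the finitely many pieces and restricting the domains of the resulting semialgebraic maps so that they land in $Y$ produces the required partial semialgebraic $*:Y\times Y\to Y$ and $inv:Y\to Y$ whose restrictions to $\Gamma\times\Gamma$ and $\Gamma$ are the group multiplication and inversion, giving (iv) and (v).
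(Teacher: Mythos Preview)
Your Part (II) is essentially the paper's argument, and your explicit $N=d+1$ via Fact~3.2 is fine; the gap is in Part (I), in securing the exact equality (iii). The paper does \emph{not} assemble $Y$ piecewise. It takes $Y$ to be the single semialgebraic set defined by the $L$-formula $\theta({\bar x}_{0},\dots,{\bar x}_{N})$ coming from quantifier elimination, so that after the $\nabla^{(N)}$-replacement the new $X$ is exactly $\{{\bar b}=({\bar b}_{0},\dots,{\bar b}_{N}):\theta({\bar b})\ \text{and}\ \partial{\bar b}_{i}={\bar b}_{i+1}\ (i<N)\}$. Each rational section $s_{p}$ is built from the generic point of $V_{p}$, a point of $X$ which already satisfies the shift $\partial{\bar b}_{i}={\bar b}_{i+1}$; since the coordinate functions ${\bar b}_{1},\dots,{\bar b}_{N}$ are rational and agree with the first $N$ output blocks of $s_{p}$ at that generic point, they agree on all of $U_{p}$. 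Hence for \emph{any} ${\bar b}\in Y\cap U_{p}$ with $\nabla({\bar b})=s_{p}({\bar b})$ the shift is forced, and together with $\theta({\bar b})$ this gives ${\bar b}\in X$. That one-line observation (the paper's (**)) makes the $\supseteq$ direction of (iii) immediate; $\subseteq$ is then a compactness argument over the types $p$, with no dimension induction needed.

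In your construction $Y=\bigcup_{j}Y_{j}$, with $Y_{j}$ obtained by substituting iterated $s_{j}$-derivatives into a formula $\chi_{j}$, the inclusion $Y_{i}\cap U_{i}\cap\{\nabla=s_{i}\}\subseteq X$ does hold for matching indices. But (iii) demands $Y\cap U_{i}\cap\{\nabla=s_{i}\}\subseteq X$ for the global $Y$, and a point $a\in Y_{j}\cap U_{i}$ with $\nabla(a)=s_{i}(a)$ and $i\neq j$ need not lie in $X$: membership in $Y_{j}$ only says $\chi_{j}$ holds of the $s_{j}$-\emph{formal} derivatives of $a$, whereas the actual derivatives are governed by $s_{i}$. ``Trimming the $Y_{j}$ by removing overlaps'' does not address this cross-term, and there is no evident semialgebraic trimming that does (the bad locus involves $\partial$). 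Your closing remark about rationality-plus-induction is precisely where the paper's (**) does the real work; as written, no substitute argument is supplied.
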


\begin{proof}  
(I). By quantifier elimination, $X$ can be defined by  a formula 
\newline
$\theta({\bar x}, {\partial}({\bar x}), ..., {\partial^{(m)}}({\bar x}))$ where $\theta$ is an $L$-formula over $k$.  
Note that at this point we can see that  $\nabla^{(m)}(X)$ is defined as $$\{({\bar x}_{0}, {\bar x}_{1},...,{\bar x}_{m}): \theta({\bar x}_{0},....,{\bar x}_{m}), \partial({\bar x}_{0},...,{\bar x}_{m-1}) = ({\bar x}_{1},...,{\bar x}_{m})\},$$ which does not make use of the finite-dimensionality of $X$. We will, using finite-dimensionality, refine this description, so as to give the proposition. 

By finite-dimensionality and compactness we may find $N \geq m$ such that $\partial^{(N)}({\bar a})$ is in the field-theoretic algebraic 
closure of $k({\bar a},..., \partial^{(N-1)}({\bar a}))$ for all ${\bar a}\in X$.  By Fact 3.2, $\partial^{(r)}({\bar a})\in 
k({\bar a},..., \partial^{(N-1)}({\bar a}), \partial^{(N)}({\bar a}))$ for all ${\bar a}\in X$ and $r$ (in particular for $r = N+1$). 

Let us first fix  ${\bar a}\in X$, and let $V_{\bar a}$ be the affine variety over $k$ with generic (over $k$) point $\nabla^{(N)}({\bar a})$ ($ =  ({\bar a}, \partial ({\bar a}),...,\partial^{(N)}({\bar a}))$ remember).  We have just seen 
that  $\partial^{(N+1)}({\bar a})\in k(\nabla^{(N)}({\bar a}))$, so equals $f(\nabla^{(N)}({\bar a}))$ for some $k$-rational function $f$, defined at $\nabla^{(N)}({\bar a})$ so defined on some Zariski open set $U$ of $V_{\bar a}$ over $k$.  
We know that  $(\nabla^{(N)}({\bar a}), \partial(\nabla^{(N)}({\bar a})))$ is a point  on $T_{\partial}(V_{\bar a})$, so it follows that  $(\nabla^{(N)}({\bar a}), \partial(\nabla^{(N)}({\bar a}))) = s_{\bar a}(\nabla^{(N)}({\bar a}))$ for some rational over $k$ section $s_{\bar a}$ of $T_{\partial}(V_{\bar a}) \to V_{\bar a}$.

Note that this has been accomplished for every ${\bar a}$ in $X$, but the data in conclusions $V_{\bar a}$, $s_{\bar a}$  of course depend only on $tp_{\partial}({\bar a}/k)$. 

At this point it is convenient to {\em replace}  $X$ by $\nabla^{(N)}(X)$ via the $k$-definable bijection $\nabla^{(N)}$. 
So  the ``new" $X$ is  $$\{({\bar x}_{0},{\bar x}_{1},...,{\bar x}_{N}): \models \theta({\bar x}_{0},...,{\bar x}
_{m}), {\bar x}_{i+1} = \partial({\bar x}_{i}), i=0,..,N-1\}.$$ 

We will add dummy variables so as to consider the formula $\theta$ as having free variables ${\bar x}_{0},....,{\bar x}_{N}$.  We let $Y$ be the set of realizations of $\theta$, an $L_{k}$-definable set.

 We write a point of the ``new" $X$ as ${\bar b} = ({\bar b}_{0},.....,{\bar b}_{N})$.  So to summarize the situation with this new notation:
\newline
(*)  For each ${\bar b}\in X$ we have an irreducible affine variety $V_{\bar b}$ (in the appropriate affine space) over $k$  and rational (over $k$) section $s_{\bar b}: V_{\bar b} \to T_{\partial}(V_{\bar b})$ such that ${\bar b}$ is a ($o$-minimal or Zariski) generic point of $V_{\bar b}$ over $k$ and  $\nabla({\bar b}) = s_{\bar b}({\bar b})$.  Let $U_{\bar b}$ be a Zariski-open over $k$ subset of $V_{\bar b}$ on which $s$ is defined.

As $V_{\bar b}$, $U_{\bar b}$ and $s_{\bar b}$ depend only on $tp_{\partial}({\bar b}/k)$, we will write them as $V_{p}$, $U_{p}$ and $s_{p}$, where $p = tp_{\partial}({\bar b}/k)$.  Let $P$ be the collection of all such types. 

Note that:

\vspace{2mm}
\noindent
(**)  for any $p\in P$, if ${\bar b}\in Y \cap U_{p}$ and $\nabla({\bar b}) = s_{p}({\bar b})$ then ${\bar b}\in X$.

\vspace{2mm}
\noindent
This is because $\nabla({\bar b}) = s_{p}({\bar b})$ implies that $\partial({\bar b}_{i}) = {\bar b}_{i+1}$ for $i = 0,..,N-1$ which together with  ${\bar b}\in Y$ is equivalent to ${\bar b}\in X$. 

\vspace{2mm}
\noindent
On the other hand if ${\bar x}\in X$ then $ \bigvee_{p\in P}({\bar x}\in U_{p} \wedge 
\nabla({\bar x}) = s_{p}({\bar x}))$. 
So by compactness there are (distinct) $p_{1},..,p_{r}\in P$ such that $X$ is covered 
by $\cup_{i=1,..,r}\{{\bar b}\in U_{p_{i}}: \nabla({\bar b}) = s_{p_{i}}({\bar b})\}$. 

So together with (**) we obtain (i) and (iii) of (I) of the Proposition with $V_{i}, U_{i}$ for $V_{p_{i}}, U_{p_{i}}$ and $s_{i}$ for $s_{p_{i}}$. 
For (ii) notice that for each $i$, $Y\cap U_{i}$ is an $L_{k}$-definable subset of $V_{i}$ defined over $k$ which contains an $L$-generic over $k$  point of $V_{i}$ so has to have the same $L$-dimension as $V_{i}$.  So we have obtained (I).

\vspace{5mm}
\noindent
(II). Part (I) applies to $X = \Gamma$ as a definable set. The $k$-definable bijection $\nabla^{(N)}$ between $\Gamma$ and $\nabla^{(N)}(\Gamma)$ gives a $k$-definable group structure on $\nabla^{(N)}(\Gamma)$.  So we immediately assume that our definable group is $\nabla^{(N)}(\Gamma)$, which we still call $\Gamma$ and we have (I) (i), (ii), (iii). 
Applying Fact 2.1 (iii) to multiplication $m: \Gamma \times \Gamma \to \Gamma$ and using the fact that for $a\in \Gamma$, $k(a,\partial (a), ..., \partial ^{(m)}(a),.....) = k(a)$, there  is a partition of $\Gamma\times\Gamma$ into finitely many $L_{\partial}$-definable (over $k$) sets $Z_{i}$ such that on $Z_{i}$, $m$ is given by  an $L_{k}$-definable  function $h_{i}$ say.  Again $Z_{i}$ is the restriction to $\Gamma\times\Gamma$ of some $L_{k}$-definable  set $W_{i}\subseteq Y\times Y$, on which we may assume $h_{i}$ to be defined.  Let $W$ be the union of the $W_{i}$ (an $L_{k}$-definable set). Then by cut and paste we can  piece together coherently the $h_{i}$'s to find a $L_{k}$-definable function $h: W \to Y$ such that for each $i$ the restriction of $h$ to $Z_{i}$ is precisely $h_{i}$.  Hence the 
restriction of $h$ to $\Gamma\times \Gamma$ is multiplication.  Extend $h$ to a partial $L_{k}$-definable function 
$*: Y\times Y \to Y$, to give (iv).  Obtaining (v) is similar. 
\end{proof}  

From our definable (finite-dimensional) group $\Gamma$ we  will construct a semialgebraic analogue of Weil's pre-group \cite{Weil} (Section 1, including Proposition 1), which we could call a semialgebraic pre-group. 

We let $\Gamma$, $Y$, $V_{i}$, $U_{i}$, $s_{i}$, $*$, $inv$ be as in the conclusion of the proposition above.  
After reordering we may assume that $V_{1},..,V_{s}$ are of maximal dimension over $k$  (in the algebraic or semialgebraic sense) among the $V_{i}$.  So $dim_{L}(Y)$ coincides with the algebro-geometric dimension of $V_{i}$ ($= dim_{L}(V_{i}({\cal U}))$) for each $i=1,..,s$.  Note that any $L$-generic point of $Y$ over $k$ is in $U_{i}$. 

In the next lemma we give the compatibility of the various notions of dimension.  See Definition 2.2 and the comments following it. 

\begin{Lemma} 
Let $\Gamma$ be our finite-dimensional $L_{\partial}$-definable group (defined again over $k$). Then with the above notation (including having replaced $\Gamma$ by suitable $\nabla^{(N)}(\Gamma)$), we have:
\newline
(i)  $dim_{\partial}(\Gamma)$ is equal to $dim_{L}(Y)$.
\newline
(ii) Let $g\in \Gamma$. Then $g$ is ${\partial}$-generic in $\Gamma$ over $k$   iff $g$ is $L$-generic in $Y$ over $k$  iff for some $i=1,..,s$, $g\in V_{i}$ and is generic in $V_{i}$ over $k$ in the algebro-geometric sense. 
\newline
(iii) Let $g_{1},..,g_{k}\in \Gamma$. Then the $g_{i}$ are $\partial$-independent over $k$ iff they are 
$L$-independent over $k$. 
\newline
(iv)   For any $d$, let $x_{1},...,x_{d}$ be $L$-generic $L$-independent (over $k$) elements of $Y$. Then there are $g_{1},..,g_{d}\in \Gamma$ which are $\partial$-generic, $\partial$-independent over $k$, such that $tp_{L}(x_{1},..,x_{d}/k) = tp_{L}(g_{1},.,g_{d}/k)$.
\newline
(v) If $g_{1}$, $g_{2}$ are $\partial$-generic in $\Gamma$ over $k$ and $\partial$-independent over $k$, then $g_{1}g_{2}$ (product in $\Gamma$) is also $\partial$-generic in $\Gamma$ over $k$ and $\{g_{1}, g_{2}, g_{1}g_{2}\}$ is pairwise $\partial$-independent over $k$.
\end{Lemma}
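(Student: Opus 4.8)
The whole lemma rests on one observation about the reduced form of $\Gamma$: after the passage to $\nabla^{(N)}(\Gamma)$ carried out in Proposition~\ref{prop3.3}, every $g\in\Gamma$ satisfies $k\langle g\rangle = k(g)$, since (by Fact~3.2 and the construction in Proposition~\ref{prop3.3}) all the derivatives $\partial^{(m)}(g)$ already lie in $k(g)$. Consequently $dim_\partial(g/k) = trdeg(k\langle g\rangle/k) = trdeg(k(g)/k) = dim_L(g/k)$, and more generally, for $g_1,\dots,g_m\in\Gamma$, the differential field $k\langle g_1,\dots,g_m\rangle$ equals the field $k(g_1,\dots,g_m)$. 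I would record this at the outset, because it turns $\partial$-dimension and $\partial$-independence on $\Gamma$ into field-theoretic transcendence degree and field independence, which over the real closed $k$ are exactly $dim_L$ and $L$-independence (Remark~2.3). With this in hand part (iii) is immediate: $\partial$-independence of $g_1,\dots,g_m$ says the fields $k\langle g_1\rangle,\dots,k\langle g_m\rangle$ are field-independent over $k$, and this is literally the statement of $L$-independence.

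For (i) and (ii) the only genuine input is the existence of a point of $\Gamma$ that is $L$-generic in $Y$; everything else is the bookkeeping above. For the upper bound, any $g\in\Gamma\subseteq Y$ has $dim_\partial(g/k)=dim_L(g/k)\le dim_L(Y)$, so $dim_\partial(\Gamma)\le dim_L(Y)$. For the matching lower bound I would fix a top-dimensional component, say $V_1$, and use $dim_L(Y\cap U_1)=dim(V_1)=dim_L(Y)$ to pick an $L$-generic type $p$ of $V_1$ over $k$ containing the formula ``$x\in Y\cap U_1$''. Applying Lemma~2.7 to the rational $D$-variety $(V_1,s_1)$ and this $p$ produces a realization $a\models p$ with $\nabla(a)=s_1(a)$; by property (iii) of Proposition~\ref{prop3.3} this $a$ lies in $\Gamma$, and it is $L$-generic in $V_1$, hence $dim_\partial(a/k)=dim_L(a/k)=dim_L(Y)$. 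This gives (i). Part (ii) then falls out: $g$ is $\partial$-generic in $\Gamma$ iff $dim_\partial(g/k)=dim_\partial(\Gamma)=dim_L(Y)$ iff $g$ is $L$-generic in $Y$; and since $g$ lies in some $V_i\cap U_i$, being $L$-generic in $Y$ forces $dim_L(g/k)=dim_L(Y)$, which is possible only for a top-dimensional $V_i$ (so $i\le s$), in which $g$ is then algebro-geometrically generic, and conversely.

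Part (iv) is where I expect the main work, and it is again an application of Lemma~2.7, now to a product. Given $L$-generic, $L$-independent $x_1,\dots,x_d\in Y$, each $x_i$ lies in some $U_{j_i}$ and, being $L$-generic in $Y$, must be algebro-geometrically generic in a top-dimensional $V_{j_i}$ (with $j_i\le s$). Since the $x_i$ are $L$-independent, the tuple $(x_1,\dots,x_d)$ is Zariski-generic in the product $V_{j_1}\times\cdots\times V_{j_d}$. The point is that the prolongation functor commutes with products, so $(V_{j_1}\times\cdots\times V_{j_d},\ s_{j_1}\times\cdots\times s_{j_d})$ is again a rational $D$-variety over $k$ satisfying the hypotheses of Lemma~2.7, whose section corresponds under $T_\partial(V_{j_1}\times\cdots\times V_{j_d})\cong T_\partial(V_{j_1})\times\cdots\times T_\partial(V_{j_d})$ to imposing $\nabla(y_i)=s_{j_i}(y_i)$ coordinatewise. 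Applying Lemma~2.7 to this $D$-variety and to the $L$-generic type $tp_L(x_1,\dots,x_d/k)$ yields a realization $(g_1,\dots,g_d)$ of that type with $\nabla(g_i)=s_{j_i}(g_i)$ for each $i$; since $tp_L$ is preserved we get $g_i\in Y\cap U_{j_i}$, whence $g_i\in\Gamma$ by property (iii) of Proposition~\ref{prop3.3}. Finally $tp_L(g_1,\dots,g_d/k)=tp_L(x_1,\dots,x_d/k)$ makes the $g_i$ $L$-generic and $L$-independent, so $\partial$-generic by (ii) and $\partial$-independent by (iii). The care needed here, namely checking that each $x_i$ comes from a maximal-dimensional component and that the product section genuinely encodes coordinatewise $\nabla$, is the only subtle point.

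Part (v) is the standard ``generics of a definable group'' computation, carried out with transcendence degrees via the dictionary of the first paragraph. Write $n=dim_\partial(\Gamma)$; by (i),(ii) we have $trdeg(k(g_1)/k)=trdeg(k(g_2)/k)=n$ and, by (iii), $trdeg(k(g_1,g_2)/k)=2n$. Since $g_1g_2=*(g_1,g_2)\in\Gamma$ lies in $dcl_L(k,g_1,g_2)$, and left translation by $g_1$ is a $k(g_1)$-definable bijection giving $g_2=*(inv(g_1),g_1g_2)\in dcl_L(k,g_1,g_1g_2)$, each of $k(g_1,g_2)$ and $k(g_1,g_1g_2)$ is contained in the algebraic closure of the other over $k$, so $trdeg(k(g_1,g_1g_2)/k)=2n$. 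The subadditivity $2n\le trdeg(k(g_1)/k)+trdeg(k(g_1g_2)/k)$ then forces $trdeg(k(g_1g_2)/k)\ge n$, while $g_1g_2\in\Gamma$ gives $\le n$ by (i); hence it equals $n$, i.e.\ $g_1g_2$ is $\partial$-generic, and the same computation shows $g_1,g_1g_2$ are $L$-independent. The symmetric argument with right translation by $g_2$ (using $g_1=*(g_1g_2,inv(g_2))$) handles the pair $g_2,g_1g_2$, while $g_1,g_2$ are independent by hypothesis; translating back through (iii) gives the pairwise $\partial$-independence of $\{g_1,g_2,g_1g_2\}$.
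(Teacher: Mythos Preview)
Your proof is correct and follows essentially the same approach as the paper: the key observation $k\langle g\rangle=k(g)$ for $g\in\Gamma$, then Fact~2.6/Lemma~2.7 for the existence of generics in~(i), the product $D$-variety $(V_{j_1}\times\cdots\times V_{j_d},\,s_{j_1}\times\cdots\times s_{j_d})$ with Lemma~2.7 for~(iv), and a dimension count for~(v). You are in fact slightly more careful than the paper in two places: in~(i) you invoke Lemma~2.7 with a type containing ``$x\in Y\cap U_1$'' so that the resulting point visibly lies in $\Gamma$ (the paper cites only Fact~2.6, which does not by itself guarantee membership in the semialgebraic set $Y$), and in~(v) you write out the transcendence-degree computation that the paper compresses to ``follows by properties of $L$-dimension''.
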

\begin{proof}  (i) We first note that for all $g\in \Gamma$, $k\langle g \rangle$ = $k(g)$, whereby $dim_{\partial}(g/k) = order(g/k) = trdeg(k(g)/k)$ which $= dim_{L}(g/k)$.  Hence $dim_{\partial}(\Gamma) \leq dim(Y)$.
\newline
On the other hand, fix $i=1,..,s$.  By 
Fact 2.6 there is $a\in V_{i}$ which is $L$-generic over $k$  and with $\nabla(a) = s_{i}(a)$. But then $a\in \Gamma$. So $dim_{\partial}(\Gamma)\geq dim(V_{i}) = dim_{L}(Y)$.
\newline
(ii) has the same proof.
\newline
(iii)  is because $k\langle g_{i} \rangle = k(g_{i})$. 
\newline
(iv) For each $i=1,..,d$ let $j_{i}\in \{1,..,s\}$ be such that  $x_{i}\in V_{j_{i}}$.  Then $(x_{1},..,x_{d})$ is $L$-generic in $V_{j_{1}}\times ..\times V_{j_{d}}$ over $k$. But $(V_{j_{1}}\times ..\times V_{j_{d}}, s_{j_{1}} \times ...\times s_{j_{d}})$ is  a rational $D$-variety over $k$ (satisfying the additional conditions in the hypotheses of Lemma 2.7, namely irreducibility with a Zariski-dense set of ${\cal U}$-points). 
By Lemma 2.7, there is $(g_{1},..,g_{d})$ satisfying $tp_{L}(x_{1},..,x_{d}/k)$ such that $\nabla (g_{i}) = s_{j_{i}}(g_{i})$ for each $i$. But then, because of the construction
each $g_{i}\in \Gamma$, and clearly the $g_{i}$ are $\partial$-generic and $\partial$-independent, over $k$. 
\newline
(v)  follows by properties of $L$-dimension 
\end{proof}


Using the notation of Proposition \ref{prop3.3}, we have
\begin{Proposition} (i) For $L$-generic $L$ independent (over $k$) $x,y$ in $Y$, $x*y$ is defined and is also 
$L$-generic in $Y$ over $k$.
\newline
(ii) For $x,y,z$ $L$-generic $L$-independent over $k$, $(x*y)$ is $L$-independent of $z$ over $k$ and $x$ is $L$-independent of $(y*z)$ over $k$ and $((x*y)*z)= (x*(y*z))$. 
\newline
(iii) For $L$-generic (over $k$) $x\in Y$, $inv(x)$ is defined and $L$-generic in $Y$ over $k$, and $inv(inv(x)) = x$. Moreover if $y$ is also $L$-generic and $L$-independent of $x$ over $k$, we have that  $y = inv(x)*(x*y)$, $x = (x*y)*inv(y)$. 
\end{Proposition}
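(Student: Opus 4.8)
The plan is to derive all three parts from a single transfer principle. Every property to be checked about the partial semialgebraic operations $*$ and $inv$ at $L$-generic $L$-independent points of $Y$ --- being defined, being $L$-generic in $Y$, being $L$-independent, or satisfying an equational identity --- is expressible by an $L_k$-formula, hence depends only on the $L$-type over $k$ of the points involved. By Lemma 3.4(iv) every such $L$-type is realized by $\partial$-generic, $\partial$-independent elements of $\Gamma$, where by Proposition~\ref{prop3.3} the maps $*$ and $inv$ restrict to honest group multiplication and inversion. The engine is that, since $*$ and $inv$ are $L_k$-definable, equal $L$-types over $k$ are preserved under applying them: if $tp_L(\bar u/k)=tp_L(\bar v/k)$ and both tuples lie in the domain, then $tp_L(\bar u, *(\bar u)/k)=tp_L(\bar v, *(\bar v)/k)$, and similarly for $inv$.

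For (i), given $L$-generic $L$-independent $x,y$ in $Y$, choose by Lemma 3.4(iv) elements $g_1,g_2\in\Gamma$, $\partial$-generic and $\partial$-independent over $k$, with $tp_L(x,y/k)=tp_L(g_1,g_2/k)$. Since $g_1,g_2\in\Gamma$ we have $g_1*g_2=g_1g_2\in\Gamma$, so $*$ is defined there, and by Lemma 3.4(v) the product $g_1g_2$ is $\partial$-generic, hence $L$-generic in $Y$ by Lemma 3.4(ii). As ``lying in the domain of $*$'' is $L_k$-definable and $tp_L(x,y/k)=tp_L(g_1,g_2/k)$, it follows that $x*y$ is defined and $tp_L(x*y/k)=tp_L(g_1g_2/k)$; being $L$-generic in $Y$ over $k$ depends only on this type, giving (i).

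For (ii), take $x,y,z$ $L$-generic $L$-independent over $k$ and pull them back to $\partial$-generic $\partial$-independent $g_1,g_2,g_3\in\Gamma$ of the same $L$-type. Joint $\partial$-independence gives that $k\langle g_1g_2\rangle$ (contained in the algebraic closure of $k\langle g_1,g_2\rangle$) is field-independent from $k\langle g_3\rangle$ over $k$, so $g_1g_2$ is $\partial$-independent from $g_3$ and, by Lemma 3.4(v), $\partial$-generic; hence $g_1g_2$ and $g_3$ are $\partial$-generic $\partial$-independent, and symmetrically so are $g_1$ and $g_2g_3$. Therefore on the $\Gamma$-side $(g_1*g_2)*g_3$ and $g_1*(g_2*g_3)$ are both defined and both equal the group product $g_1g_2g_3$, while $g_1g_2$ is $L$-independent from $g_3$ (Lemma 3.4(iii)). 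Each assertion of (ii) is an $L_k$-formula in $(x,y,z)$ holding at $(g_1,g_2,g_3)$, so it transfers to $(x,y,z)$ since these tuples have the same $L$-type over $k$. Part (iii) runs the same way for $inv$: for $L$-generic $x$ pick $\partial$-generic $g\in\Gamma$ of the same type, note $inv(g)=g^{-1}\in\Gamma$ is $\partial$-generic and $inv(inv(g))=g$, and transfer; for the ``moreover'' realize $(x,y)$ by $\partial$-generic $\partial$-independent $g_1,g_2$, use that $\{g_1,g_2,g_1g_2\}$ is pairwise $\partial$-independent (Lemma 3.4(v)) together with $inv(g_1)=g_1^{-1}$ being $L$-interalgebraic with $g_1$ over $k$ to see that $inv(g_1)*(g_1g_2)=g_1^{-1}(g_1g_2)=g_2$ is a defined group product, and transfer $y=inv(x)*(x*y)$ and $x=(x*y)*inv(y)$.

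The main obstacle I expect is the bookkeeping in (ii) and in the ``moreover'' of (iii): at each application of $*$ one must verify that the two arguments are $L$-generic and $L$-independent, so that after transfer $*$ genuinely computes the group operation and is defined. This is exactly what the joint/pairwise $\partial$-independence of products of independent generics, supplied by Lemma 3.4(iv),(v), provides. Once those independence relations are secured, everything reduces to true identities in the group $\Gamma$ transported across equal $L$-types via the $L_k$-definability of $*$ and $inv$.
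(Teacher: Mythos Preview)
Your proof is correct and follows essentially the same approach as the paper: realize the given $L$-generic $L$-independent tuple by a $\partial$-generic $\partial$-independent tuple in $\Gamma$ via Lemma~3.4(iv), use that $*$ and $inv$ coincide with the group operations on $\Gamma$ (Proposition~\ref{prop3.3}(II)), verify the required genericity/independence/identities there using Lemma~3.4(ii),(iii),(v), and transfer back along equal $L$-types by $L_k$-definability of $*$ and $inv$. The paper's proof is terser but identical in substance; your version spells out the bookkeeping (definedness at each step, the independence checks in (ii) and the ``moreover'' of (iii)) that the paper leaves implicit.
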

\begin{proof} 
(i)  By Lemma 3.4 above, let $g,h\in \Gamma$ such that  $tp_{L}(g,h/k) = tp_{L}(x,y/k)$. 
Then $tp_{L}(g,h,gh/k) = tp_{L}(x,y,x*y/k)$.  By Lemma 3.4 ((ii) and (v)), $gh$ is $L$-generic in $Y$ over $k$, so $x*y$ is too. 
\newline
(ii) Again by Lemma 3.4 (iv) we may assume that $x,y,z$ belong to $\Gamma$, and by Lemma 3.4 the statement reduces to its truth for $\partial$-independence,
which is given by Lemma 3.4(v). 
\newline
(iii)  Same proof as above. 

\end{proof}

\section{Producing an $RCF$-definable group. }

The main in this section  is to use Proposition 3.5 and an adaptation of various group construction results to prove:

\begin{Theorem} Let $\Gamma$ be a finite-dimensional group definable in $({\mathcal U}, \partial)$. Then there is a group $G$ definable in the real closed field ${\mathcal U}$ and an $L_{\partial}$-definable embedding of $\Gamma$ into $G$.  
\end{Theorem}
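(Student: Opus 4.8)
The plan is to assemble the ingredients already developed in Section 3 into a ``semialgebraic pre-group'' and then feed it to a group-chunk/reconstruction machine over the real closed field $\mathcal U$. By Proposition 3.5 the semialgebraic partial operations $*$ and $inv$ on the $L_k$-definable set $Y$ satisfy, on $L$-generic $L$-independent tuples, exactly the associativity, inversion and genericity-preservation identities of Weil's classical pre-group axioms, but now in the $o$-minimal/algebro-geometric dimension theory of $RCF$. So the first step is to record that $(Y,*,inv)$ restricted to its $L$-generic data is a pre-group in the sense needed for a reconstruction theorem.

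The second step is to run the group-chunk construction. Since $\mathcal U$ is a real closed field, it is a geometric structure with a well-behaved (acl-based, finite-valued) dimension function, so Theorem 5.4 — the general statement that a generically defined group on a definable set in a geometric structure produces an actual definable group $G$ together with a definable map $h$ of $X$ into $G$ whose image is ``large'' and which carries the generic $F,g$ to genuine multiplication and inversion — applies directly in the reduct $\mathcal U$. This yields a group $G$ definable in the real closed field $\mathcal U$ (equivalently a semialgebraic group) and a semialgebraic map $h\colon Y \dashrightarrow G$ that is a generic homomorphism for $*$: for $L$-generic $L$-independent $x,y$ we get $h(x*y)=h(x)\cdot h(y)$ and $h(inv(x)) = h(x)^{-1}$, with $h$ generically injective and $h(Y)$ large in $G$.

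The third and final step — which I expect to be the genuine obstacle rather than a formality — is the fourth-step embedding: promoting the generic homomorphism $h$ to an honest $L_\partial$-definable \emph{injective homomorphism} of the whole group $\Gamma$ into $G$. The difficulty is that $h$ is only a partial semialgebraic map defined on $L$-generic points of $Y$, whereas $\Gamma$ is the full $L_\partial$-definable subset $\{a\in Y\cap U_i(\mathcal U): \nabla(a)=s_i(a)\}$, and an arbitrary $g\in\Gamma$ need not be generic. The standard fix is to define $\Phi(g) = h(g_1)\cdot h(g_1^{-1}g)$ where $g_1$ is a fixed $\partial$-generic element of $\Gamma$ chosen $\partial$-independent from $g$ (using saturation and Lemma 3.4 to guarantee $g_1$ and $g_1^{-1}g$ land in the generic locus where $h$ and $*$ behave as a homomorphism); one checks this is well-defined (independent of the choice of $g_1$), an $L_\partial$-definable map, a homomorphism, and injective. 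The verification that $\Phi$ is independent of $g_1$ and multiplicative rests on the associativity and generic-independence clauses of Proposition 3.5(ii) together with Lemma 3.4(v), and injectivity follows because $h$ is generically injective and $\Phi$ agrees with $h$ (up to the generic translation) on generic points.

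I would then note that the map $\Phi$, built from the semialgebraic $h$ and the $L_\partial$-definable group operation of $\Gamma$, is $L_\partial$-definable by construction, so $\Phi\colon\Gamma\hookrightarrow G$ is the desired embedding; the coherence of the notation from the pre-group, chunk and reconstruction steps — rather than appealing to an external black box — is exactly what makes this last computation go through cleanly, which is why the paper keeps its own Section 5 constructions rather than quoting \cite{Pantelis} or \cite{Metastable}.
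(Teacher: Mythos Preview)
Your proposal is correct and follows essentially the same route as the paper: feed the semialgebraic pre-group $(Y,*,inv)$ from Proposition~3.5 into the group-chunk/germ construction (which the paper carries out explicitly as Lemmas~4.2--4.5 and then repackages as Theorem~5.4), and then define the embedding on an arbitrary $\gamma\in\Gamma$ by writing $\gamma=g_1\cdot(g_1^{-1}\gamma)$ with $g_1$ $\partial$-generic over $\gamma$ and sending $\gamma$ to $h(g_1)\cdot h(g_1^{-1}\gamma)$, exactly as in Lemma~4.7. The only cosmetic difference is that you cite the general geometric-structure statement (Theorem~5.4) rather than the $RCF$-specific Lemmas~4.2--4.5, and your definability and injectivity sketches are terser than the paper's (which argues definability via type-definability of the graph plus compactness, and injectivity via Lemma~4.3(iv)); but the content is the same.
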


See also Remark 4.8 at the end of this section for more on the relation between $\Gamma$ and $G$.

Starting from the data  $Y$, $*$, $inv$ in Proposition 3.5 the construction of $G$ is related to Weil's theorem 
about recovering a (connected)  algebraic group from birational  data, namely from a generically given group operation on an irreducible variety $V$ \cite{Weil}.  In the Weil context, the generically defined operation is on a single stationary  type in $ACF_{0}$ (the unique generic type of $V$). In our context there is an unbounded set of $L$-generic types of our semialgebraic set $Y$, which introduces some complications. Nevertheless we produce a suitable ``group-chunk" $Y_{0}\subseteq Y$ in Lemma 4.2 below, from which we will construct our semialgebraic group from a  collection of germs of definable, invertible functions on $Y_{0}$.  Similar things appear in a stability-theoretic context in Hrushovski's work on invertible germs of definable functions on a stationary type \cite{Hrushovski-locallymodular}. The point of view taken here is also influenced by van den Dries' paper \cite{vdDries}.  We put this aspect of our work into the general context of geometric structures, in Section 5.1. 

There is another aspect to Weil's theorem which concerns constructing not just a definable (in $ACF$) group, but an {\em algebraic} group.  The $RCF$-analogue of this aspect is about constructing a {\em Nash group} which is done in  \cite{Pillay-groupsandfields}. This will be relevant to the follow-up paper on Nash $D$-groups.

We will be working for now in the structure ${\mathcal U}$  (i.e. a saturated real closed field). In any case for now definable (and generic, independent etc.)  means in ${\mathcal U}$, namely semialgebraic or $L$-definable. At the end of the section (Lemma 4.7) we will come back to definability in $({\mathcal U},\partial)$. 

We will go straight in to the proof of Theorem 4.1.   We use notation as in Proposition 3.5, namely ($Y$, $*$,  $inv$).  By a {\em large} subset $Y_{0}$ of $Y$ defined over $k$ we mean a semialgebraic subset $Y_{0}$ of $Y$ with $dim(Y\setminus Y_{0}) <  dim(Y)$.  In particular $dim(Y_{0}) = dim(Y)$.  This is equivalent to saying that all generic over $k$ points of $Y$ are in $Y_{0}$.  Note that the intersection of finitely many large subsets of $Y$ is large. 

Note that if a formula $\phi(x)$ over $k$ holds  for all $k$-generic $a\in Y$, then $\phi(x)$ defines a large subset of $Y$. Likewise for subsets of $Y\times Y$, $Y\times Y \times Y$.
We will also use freely definability of dimension in $RCF$.

We now let $F$ denote the partial map $*: Y\times Y \to Y$, and $g$ the partial map $inv$.


\begin{Lemma} Let  $Y_{0}$ be the set of $x\in Y$ such that 
\newline
(i) for a large set of  $(y,z)$ in $Y^{2}$, $F(x,y)$, $F(y,z)$, $F(x,F(y,z))$, $F(F(x,y),z)$ are defined and $F(x,F(y,z))=F(F(x,y),z)$,
\newline 
(ii) $g(x)$ and $g(g(x))$ are defined and $g(g(x)) = x$, 
\newline
(iii)  for a large set of $y\in Y$,  $g(y)$, $F(x,y)$, and $F(g(x), F(x,y))$ are defined, $F(g(x), F(x,y)) = y$,
$F(F(x,y), g(y))$ is defined and equals $x$,  and  $g(F(x,y)) = F(g(y), g(x))$.
\newline
Then $Y_{0}$ is definable over $k$ and is a large subset of $Y$. 
\end{Lemma}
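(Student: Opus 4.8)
The plan is to prove the two assertions separately: first that $Y_0$ is $L$-definable over $k$, and then that it is large. For definability, note that each of the clauses (i)--(iii) is built from conditions of two kinds. Conditions of the first kind assert that a semialgebraic partial map such as $F$ or $g$ is defined at a given point and that two semialgebraic terms agree there; these are patently $L_{k}$-definable, since $F$ and $g$ have $L_{k}$-definable domains and graphs. Conditions of the second kind have the form ``for a large set of $(y,z)$ in $Y^{2}$, $\psi(x,y,z)$ holds''. Such a clause says precisely that $dim\{(y,z)\in Y^{2}:\neg\psi(x,y,z)\}<dim(Y^{2})$, and by definability of dimension in $RCF$ this is itself an $L_{k}$-definable condition on $x$ (likewise for the clauses quantifying over a large set of single $y$). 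Hence $Y_0$, being a finite Boolean combination of such conditions, is $L$-definable over $k$.

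For largeness I will use the characterization recorded just before the statement: $Y_0$ is large in $Y$ iff every $x\in Y$ that is $L$-generic over $k$ lies in $Y_0$. So fix such an $x$ and verify the three clauses using Proposition 3.5, recalling $F=\,*$ and $g=inv$. Clause (ii) is immediate from Proposition 3.5(iii): $g(x)=inv(x)$ is defined and $g(g(x))=inv(inv(x))=x$. For clause (i), let $(y,z)$ range over the points generic in $Y^{2}$ over $k(x)$, equivalently over triples $x,y,z$ that are pairwise $L$-independent and each $L$-generic over $k$; then Proposition 3.5(i),(ii) guarantees that $F(x,y)$, $F(y,z)$, $F(x,F(y,z))$ and $F(F(x,y),z)$ are all defined and that $F(x,F(y,z))=F(F(x,y),z)$. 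Since the points generic over $k(x)$ form a large subset of $Y^{2}$, clause (i) holds for this $x$. Clause (iii) is handled the same way: for $y$ generic in $Y$ over $k(x)$, Proposition 3.5(iii) directly yields $g(y)$ and $F(x,y)$ defined, $F(g(x),F(x,y))=inv(x)*(x*y)=y$, and $F(F(x,y),g(y))=(x*y)*inv(y)=x$.

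The one item not literally listed in Proposition 3.5 is the reversal law $g(F(x,y))=F(g(y),g(x))$, i.e. $inv(x*y)=inv(y)*inv(x)$. Here I would pass back to the group: by Lemma 3.4(iv) choose $g_1,g_2\in\Gamma$ that are $\partial$-generic and $\partial$-independent over $k$ with $tp_{L}(g_1,g_2/k)=tp_{L}(x,y/k)$. On elements of $\Gamma$ the maps $F$ and $g$ restrict to multiplication and inversion in $\Gamma$ (Proposition 3.5(II), clauses (iv),(v)), and $g_1g_2\in\Gamma$, so the genuine group identity $(g_1g_2)^{-1}=g_2^{-1}g_1^{-1}$ shows the reversal law holds at $(g_1,g_2)$. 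As this law is an $L_{k}$-definable condition on the pair, equality of $L$-types transfers it to $(x,y)$. Thus the set of admissible $y$ is again large, clause (iii) holds in full, and $x\in Y_0$, which establishes largeness.

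I expect the genuine work to be the bookkeeping around the ``large set'' quantifiers. In the definability half one must invoke definability of dimension to convert ``for a large set of $(y,z)$'' into a first-order condition on $x$. In the largeness half the main care is to choose the auxiliary parameters generic over $k(x)$ rather than merely over $k$, so that the nested compositions $F(x,F(y,z))$ and the cancellation terms land on $L$-generic points where Proposition 3.5 is applicable; tracking these independences is where the argument can slip. The reversal law is the only place where Proposition 3.5 must be supplemented, and pulling $(x,y)$ back to a $\partial$-generic $\partial$-independent pair in $\Gamma$ via Lemma 3.4(iv) is the clean way to supply it.
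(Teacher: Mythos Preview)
Your argument is correct and follows the same strategy as the paper's (one-line) proof: establish that $Y_{0}$ is $L_{k}$-definable via definability of dimension, then show largeness by checking that every $k$-generic $x\in Y$ satisfies (i)--(iii) using Proposition~3.5. Your explicit treatment of the reversal law $g(F(x,y))=F(g(y),g(x))$ by pulling back to $\Gamma$ via Lemma~3.4(iv) fills in a detail the paper leaves implicit; note only the small slip that the reference for $*$ and $inv$ restricting to group multiplication and inversion on $\Gamma$ is Proposition~3.3(II)(iv),(v), not 3.5.
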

\begin{proof} This is immediate, using Proposition 3.5,  once one sees that $Y_{0}$ is definable over $k$ and contains all generic over $k$ points of $Y$. 
\end{proof}

We now consider germs of partial definable functions from $Y_{0}$ to $Y_{0}$.  Note first that if $f(-)$ is a partial  function definable with parameters (such as $F(x,c)$ for some $c$),  then the following are equivalent  (see \cite{Pillay-groupsandfields})
\par (i) for some 
tuple of parameters $d$ such that $f$ is definable over $k,d$, whenever $y\in Y_{0}$ is generic over $k,d$ then $f(y)$ is defined and in $Y_{0}$, 
\par (ii) there is a large definable (with parameters) subset $U$ of $Y_{0}$ such that $f$ is defined on $U$, with values in $Y_{0}$. 

If $f_{1}$, $f_{2}$ satisfy (i) (and (ii)) above, we will say that $f_{1}$ and $f_{2}$ have the same {\em germ} on $Y_{0}$, if whenever $y\in Y_{0}$ is generic over the relevant parameters, then $f_{1}(y) = f_{2}(y)$, equivalently, by the above, $f_{1}=  f_{2}$ on some large definable subset of $Y_{0}$. 

We let $\sim$ be the equivalence relation of having the same germ. Then $\sim$ is definable over $k$ for uniformly definable over $k$ families of functions. Namely if $f(x,z)$ is a partial function definable over $k$ such that for all $c$, $f(-,c)$ satisfies (i) (and (ii)) above then the equivalence relation $E(z_{1}, z_{2})$: $f(-,z_{1})\sim f(-,z_{2})$ is $k$-definable, so the collection of germs of the $f(-,c)$ as $c$ varies is a $k$-definable set in ${\cal U}^{eq}$  (which is the same as ${\cal U}$, as $RCF$ eliminates imaginaries).


We will consider (partial) definable functions of the form $F_{a}(-)  = F(a,-)$ for $a\in Y_{0}$.  We show that they are ``generically invertible", the set of germs of definable functions $Y_{0} \to Y_{0}$ generated by the set of $F_{a}$ is a group which is generated in two steps  (and even better).  This will be the sought after definable group.


We will work {\em over $k$}. 

\begin{Lemma} (i) $g$ is a bijection from $Y_{0}$ to itself, and $g\circ g$ is the identity on $Y_{0}$.{\color{blue}$\cdot$}
\newline
(ii) For every $a\in Y_{0}$, there is a large subset $U_{a}$ of $Y_{0}$ such that $F_{a}$ is a bijection between $U_{a}$ and its image $F(U_{a})$ in $Y_{0}$ which is also large in $V_{0}$. So we can compose any $F_{a}$ and $F_{b}$ to get another function with the same properties.
\newline
(iii) For every $a\in Y_{0}$, $a$ is determined by the germ of $F_{a}$.
\newline
(iv)  For every generic $a\in Y_{0}$ and any $x,y\in Y_{0}$, if $F_{a}\circ F_{x}$ and $F_{a}\circ F_{y}$ have the same germ then $x= y$. 
\newline
(v)  If  $x\in Y_{0}$ and $a$ is generic over $x$, then $F_{x}\circ  F_{a}$ has the same germ as $F_{b}$ for some (generic) $b\in dcl(x,a)\cap Y_{0}$.  Likewise  $F_{a}\circ F_{x}$ has the same germ as $F_{b}$ for some (generic) $b\in dcl(x,a)\cap Y_{0}$.
\newline
(vi)  The set of germs of $F_{a}\circ F_{b}$, for $a,b\in Y_{0}$, is a group, under composition of germs, where moreover the inverse of the germ of $F_{a}\circ F_{b}$ is the germ of $F_{g(b)}\circ F_{g(a)}$. 
\end{Lemma}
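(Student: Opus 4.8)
The plan is to work throughout with the convention that the product of two germs is composition, $[F_a]\cdot[F_b] := [F_a\circ F_b]$ where $(F_a\circ F_b)(t) = F(a, F(b,t))$, and to extract from Lemma 4.2 a small toolkit of generic identities that drive all six parts. The two basic facts I would record first are the \emph{collapse law} $[F_a]\cdot[F_b] = [F_{a*b}]$, valid (by the associativity clause Lemma 4.2(i)) whenever one of $a,b$ is $L$-generic over the other; and the \emph{cancellation/involution laws} $[F_{g(a)}]\cdot[F_a] = [F_a]\cdot[F_{g(a)}] = \mathrm{id}$, which come from the identities $F(g(a),F(a,t))=t$ and $F(F(a,t),g(t))=a$ of Lemma 4.2(iii) together with $g\circ g = \mathrm{id}$ on $Y_0$ (Lemma 4.2(ii)). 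A point worth isolating early, because it is used repeatedly, is that left and right translation preserve genericity: if $a$ is $L$-generic over $x$ then $x*a$ and $a*x$ are $L$-generic over $x$. This follows from the cancellation laws, which exhibit $a = F(g(x), x*a)\in \mathrm{dcl}(x, x*a)$ and hence $\dim(x*a/x)\ge \dim(a/x) = \dim_L(Y)$.

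Parts (i)--(iii) are then immediate. For (i), $g\circ g = \mathrm{id}$ on $Y_0$ makes $g$ an involution, hence a bijection of $Y_0$ onto itself, after, if necessary, replacing $Y_0$ by the still-large set $Y_0\cap g(Y_0)$. For (ii), the left identity shows $F_a$ is injective on a large set with left-inverse germ $F_{g(a)}$, while the involution form $F(a,F(g(a),z)) = z$ (obtained by substituting $g(a)$ for $x$ in $F(g(x),F(x,z))=z$ and using $g\circ g=\mathrm{id}$) shows the image contains all generic $z$, so is large; composing two such maps preserves the property. For (iii), if $y$ is generic over $a$ then $a = F(F(a,y),g(y))$, so $a$ is recovered from the single value $F_a(y)$, hence from the germ of $F_a$.

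Parts (iv) and (v) are the cancellation and one-step closure statements. For (iv), with $a$ generic over $x,y$, compose $[F_a]\cdot[F_x] = [F_a]\cdot[F_y]$ on the left with $[F_{g(a)}]$; associativity of composition of germs and the cancellation law reduce this to $[F_x] = [F_y]$, whence $x=y$ by (iii). For (v), the collapse law gives directly $F_x\circ F_a \sim F_{x*a}$ and $F_a\circ F_x\sim F_{a*x}$ when $a$ is generic over $x$, and the translation-genericity remark shows the witness $b$ (namely $x*a$, respectively $a*x$) lies in $\mathrm{dcl}(x,a)\cap Y_0$ and is generic.

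The real content is (vi), and I expect closure of $S^2 = \{[F_a\circ F_b] : a,b\in Y_0\}$ under composition to be the main obstacle: this is exactly where an \emph{unbounded} family of $L$-generic types, rather than Weil's single generic type, forces extra work, since for arbitrary $a,b,c,d$ the adjacent inner factors of $[F_a][F_b][F_c][F_d]$ need not be generic over one another and so cannot be collapsed directly by (v). The device is to introduce a fresh $v\in Y_0$ generic over $a,b,c,d$ and insert the identity $[F_{g(v)}]\cdot[F_v]$ inside $\sigma=[F_a][F_b]$, rewriting it as $[F_{a*g(v)}]\cdot[F_{v*b}]$, both collapses licensed by (v) since $g(v)$ and $v$ are generic over $a$ and $b$. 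Because translation preserves genericity, the new right factor $v*b$ is generic over $c$ and $d$, so in $[F_{a*g(v)}][F_{v*b}][F_c][F_d]$ we may collapse the adjacent pair $[F_{v*b}][F_c]$ and then collapse the outcome against $[F_d]$, each step keeping the middle element generic over the next parameter; the four-fold product thus reduces to a two-fold product $[F_e]\cdot[F_f]\in S^2$. Granting closure, the remaining axioms are formal: associativity is that of composition of germs; the identity element is the common germ $[F_a\circ F_{g(a)}] = \mathrm{id}$; and the stated inverse is verified directly, $(F_a\circ F_b)\circ(F_{g(b)}\circ F_{g(a)}) \sim F_a\circ F_{g(a)} = \mathrm{id}$ using $F_b\circ F_{g(b)}\sim\mathrm{id}$, with the symmetric computation on the other side.
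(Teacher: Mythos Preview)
Your overall strategy matches the paper's, but there is one genuine gap in the justification. You assert the ``collapse law'' $[F_a]\cdot[F_b]=[F_{a*b}]$ follows from Lemma~4.2(i) whenever \emph{one} of $a,b$ is $L$-generic over the other. This is fine when the \emph{second} factor $b$ is generic over $a$: for $t$ generic over $a,b$ the pair $(b,t)$ is generic in $Y^{2}$, so 4.2(i) with $x=a$ gives $F(a,F(b,t))=F(F(a,b),t)$. But when $a$ is generic over $b$ with $b\in Y_0$ arbitrary, the pair $(b,t)$ need \emph{not} be generic in $Y^{2}$, and no instantiation of 4.2(i) yields $F(a,F(b,t))=F(F(a,b),t)$ directly --- indeed $F(a,b)$ is not even obviously defined from 4.2(i). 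This is exactly why the paper's proof of the second half of (v) is indirect: it first collapses $F_{g(x)}\circ F_{g(a)}$ (the ``good'' direction, second factor generic over first) to some $F_c$, and then uses the anti-homomorphism identity $g(F(x,y))=F(g(y),g(x))$ from 4.2(iii) to show $F_a\circ F_x\sim F_{g(c)}$. Your conclusion is in fact correct --- one checks $g(c)=F(a,x)$ --- but your appeal to 4.2(i) alone does not justify it.

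The same asymmetry resurfaces in your (vi) at the steps $[F_v][F_b]$ and $[F_{v*b}][F_c]$, where again the first factor is generic over the second. Once (v) is proved properly these go through, and your insertion of $[F_{g(v)}][F_v]$ between $a$ and $b$, followed by iterated collapsing, is a legitimate variant of the paper's insertion of $[F_e][F_{g(e)}]$ between $b$ and $c$.

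A minor point on (iv): you write ``with $a$ generic over $x,y$'', but the hypothesis is only that $a$ is generic over $k$. Your argument does not actually need more, since $[F_{g(a)}]\cdot[F_a]=\mathrm{id}$ holds for every $a\in Y_0$ by 4.2(iii); so this is harmless. Your route to (iv) via left-multiplication by $[F_{g(a)}]$ is a clean alternative to the paper's argument, which instead picks a generic $z$ and uses injectivity of $F_a$ from (ii).
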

\begin{proof} (i) is immediate from Lemma 4.2 (ii). 
\newline
(ii) Fix $a\in Y_{0}$. Let $U_{a}$ be $\{y\in Y_{0}: F_{a}(y)$ is defined and in $Y_{0}$, and $F_{g(a)}(F_{a}(y))$ is defined and equals $y$\}. Then $U_{a}$ is definable (over $a$). Suppose $y\in Y_{0}$ is generic over $a$, so by 4.2 (ii),  $F_{a}(y)$ is defined and $F_{g(a)}(F_{a}(y)) = y$ and in particular $F_{a}(y)$ is also generic over $a$ so in $Y_{0}$. Hence if $y\in Y_{0}$ is generic over $a$, then $y\in U_{a}$. It follows that $U_{a}$ is large in $Y_{0}$.

Note that it follows that $F_{a}|U_{a}$ is a bijection between $U_{a}$ and its image, which by definition is also a subset of $Y_{0}$.

It remains to show that the image $F_{a}(U_{a})$ is large. But as in the first part of the proof of (ii) (and the fact that $g(a)\in Y_{0}$) $U_{g(a)}$ is large. But $U_{g(a)}$ is contained in $F_{a}(U_{a})$ so the latter is large too.

So as stated in the lemma, given $a, b\in Y_{0}$, the composition  $F_{a}\circ F_{b}$ will give a bijection between two large subsets of $Y_{0}$.
\newline
(iii)  Let $a,b\in Y_{0}$ and suppose that $F_{a} \sim F_{b}$. Choose $c\in Y_{0}$ generic over $a,b$.  Then $F_{a}(c) = F_{b}(c) = d$ say.  
By Lemma 4.2 (iii), $F_{d}(g(c))$  is defined and equals $a$, and also equals $b$, so $a=b$. 
\newline
(iv) Suppose that $F_{a}\circ F_{x} \sim F_{a}\circ F_{y}$. Let $z$ (in $Y_{0}$ of course) be generic over $a,x,y$. So $F_{a}(F(x,z)) = F_{a}(F(y,z))$.   By part (ii) $F(x,z) = F(y,z)$. By Lemma 4.2 (iii), $F(F(x,z),g(z)) = x$ and $F(F(y,z), g(z)) = y$, so $x=y$. 
\newline
(v)  First  consider  $F_{x}\circ F_{a}$.  Let $y$ be generic over $x,a$, then $(F_{x}\circ F_{a})(y) = F(x, F(a,y)) = F(F(x,a),y)$ 
by Lemma 4.2(i) and properties of dimension. But  $F(x,a) = b$ is generic (over $x$ too) and clearly $y$ is generic over $x,a,b$ so $F_{x}\circ F_{a}$ has the same germ as $F_{b}$. 
\newline
Now consider $F_{a}\circ F_{x}$.  By the first part of our proof of (v), $F_{g(x)}\circ F_{g(a)}$ has the same germ as $F_{c}$ for some generic $c$. It suffices to prove:
\newline
{\em Claim.}  $F_{a}\circ F_{x}$ has the same germ as  $F_{g(c)}$. 
Choose $y$ generic over $x,a,c$. Let $F(x,y) = w$ and $F(a,w) = z$. So $(F_{a}\circ F_{x})(y) = z$. Then  $w,z$ are each generic over $x,a,c$, and   $F_{g(a)}(z) = w$ and $F_{g(x)}(w) = y$, whereby  $(F_{g(a)}\circ F_{g(x)})(z) = y$, hence $F_{c}(z) = y$, whereby $F_{g(c)}(y) = z$, proving the Claim.
\newline
(vi)  The fact that the inverse of the germ of $F_{a}\circ F_{b}$ is the germ of $F_{g(b)}\circ F_{g(a)}$ is proved as in 
the last part of the proof of (v), namely that for generic $y$ over $a,b$, and $z = (F_{a}\circ F_{b})(y)$, $z$ is also 
generic over $a,b$ and moreover $(F_{g(b)}\circ F_{g(a)})(z) = y$. 

In order to prove that $\{F_a\circ F_b:a,b
\in Y_0\}$ is a group it suffices to prove that for $a,b,c,d\in Y_{0}$,  $F_{a}\circ F_{b}\circ F_{c}\circ F_{d}$ has the same germ as  $F_{x}\circ F_{y}$ for some $x,y\in Y_{0}$. 

Let $e\in Y_{0}$ be generic over $a,b,c,d$, and consider $F_{a}\circ F_{b}\circ F_{e}\circ F_{g(e)}\circ F_{c}\circ F_{d}$, which has the same germ as $F_{a}\circ F_{b}\circ F_{c}\circ F_{d}$.

Iterating (v) we see that $F_{a}\circ F_{b}\circ F_{d}$ has the same germ as $F_{x}$ (where  $x$ is generic), and likewise  $F_{g(e)}\circ F_{c}\circ F_{d}$ has the same germ as some $F_{y}$.
So putting it together, $F_{a}\circ F_{b}\circ F_{c}\circ F_{d}$ has the same germ as $F_{x}\circ F_{y}$. 

\end{proof}

From (vi) above we have constructed a group  interpretable in $RCF$ (over $k$)  whose domain is the set of germs of $F_{a}\circ F_{b}$, $a,b\in Y_{0}$. The domain is the quotient of $Y_{0}^{2}$ by a $k$-definable equivalence relation $E$.
We call this group $G$. 

As $RCF$ has elimination of imaginaries, this group is definably isomorphic over $k$ to one whose underlying set is a definable subset of ${\mathcal U}^{m}$ some $m$. 
However, it will be convenient, also for other contexts where one does not have elimination of imaginaries,  to have a finer statement which is what we do now.

\begin{Lemma} Let $N =  dim(Y_{0})$, and let $\alpha > 2N$.  Let $t_{1},...,t_{\alpha}$ be independent generic elements of $Y_{0}$. Then for any $a,b\in Y_{0}$, there is $d\in Y_{0}$ and $t\in \{t_{1},..,t_{\alpha}\}$ such that $F_{a}\circ F_{b}$ has the same germ as $F_{t}\circ F_{d}$. Moreover $d$ is determined by $t$ and the germ of $F_{t}\circ F_{d}$. 
\end{Lemma}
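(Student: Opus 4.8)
The plan is to use the fixed generic independent elements $t_{1},\dots,t_{\alpha}$ as a pool from which to extract a single $t$ that is generic over the given pair $(a,b)$, and then to collapse the triple composite $F_{g(t)}\circ F_{a}\circ F_{b}$ to a single germ $F_{d}$ by two applications of Lemma 4.3(v). Left-composing with $F_{t}$ and using that $F_{t}\circ F_{g(t)}$ is the identity germ then yields $F_{a}\circ F_{b}\sim F_{t}\circ F_{d}$.

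To extract $t$, I would argue by a dimension count. Each $t_{i}$ is generic over $k$, so $\dim(t_{i}/k)=N$, and by independence $\dim(t_{1},\dots,t_{\alpha}/k)=\alpha N$; also $\dim(a,b/k)\le 2N$ since $a,b\in Y_{0}$. If no $t_{i}$ were generic over $k,a,b$, then $\dim(t_{i}/k,a,b)\le N-1$ for every $i$, and since enlarging the base only decreases dimension,
\[
\dim(t_{1},\dots,t_{\alpha}/k,a,b)=\sum_{i=1}^{\alpha}\dim(t_{i}/k,a,b,t_{1},\dots,t_{i-1})\le \alpha(N-1).
\]
On the other hand $\dim(t_{1},\dots,t_{\alpha}/k,a,b)\ge \dim(t_{1},\dots,t_{\alpha}/k)-\dim(a,b/k)\ge \alpha N-2N$. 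Combining the two gives $\alpha\le 2N$, contradicting $\alpha>2N$; hence some $t:=t_{i}$ is generic over $k,a,b$. In particular $g(t)$ is generic over $a$ and $t$ is generic over each of $a$ and $b$, by Lemma 4.3(i) and the $k$-definability of $g$.

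Now I would carry out the reduction. Since $g(t)$ is generic over $a\in Y_{0}$, Lemma 4.3(v) gives $F_{g(t)}\circ F_{a}\sim F_{e}$ for some generic $e\in dcl(t,a)\cap Y_{0}$. The crucial point is that $e$ is in fact generic over $b$, and to see this I would recover $t$ from $(a,e)$: left-composing $F_{g(t)}\circ F_{a}\sim F_{e}$ with $F_{t}$ and using $F_{t}\circ F_{g(t)}\sim\mathrm{id}$ (which follows from Lemma 4.2(iii) applied with $x=g(t)$, since $g(g(t))=t$) gives $F_{a}\sim F_{t}\circ F_{e}$; right-composing with $F_{g(e)}$ and using $F_{e}\circ F_{g(e)}\sim\mathrm{id}$ gives $F_{t}\sim F_{a}\circ F_{g(e)}$, so the germ of $F_{t}$ lies in $dcl(a,e)$, whence $t\in dcl(a,e)$ by Lemma 4.3(iii). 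As $t$ is generic over $a,b$ and $t\in dcl(a,e)$, we get $N=\dim(t/k,a,b)\le \dim(e/k,a,b)\le N$, so $e$ is generic over $a,b$, in particular over $b$. Lemma 4.3(v) now applies again to $F_{e}\circ F_{b}$, yielding $F_{e}\circ F_{b}\sim F_{d}$ for a generic $d\in dcl(e,b)\cap Y_{0}$. Putting the pieces together, $F_{a}\circ F_{b}\sim F_{t}\circ F_{g(t)}\circ F_{a}\circ F_{b}\sim F_{t}\circ F_{e}\circ F_{b}\sim F_{t}\circ F_{d}$, as required. The ``moreover'' clause is then immediate: since $t$ is generic, $F_{t}\circ F_{d}\sim F_{t}\circ F_{d'}$ forces $d=d'$ by Lemma 4.3(iv), so $d$ is determined by $t$ and the germ of $F_{t}\circ F_{d}$.

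I expect the main obstacle to be precisely the genericity bookkeeping in this middle step. Because $a$ and $b$ are arbitrary (not generic) elements of $Y_{0}$, one cannot directly invoke Proposition 3.5(i) to conclude that the intermediate element $e=g(t)*a$ stays generic over $b$; instead one must exploit the interalgebraicity $t\in dcl(a,e)$ extracted from Lemma 4.3(iii), together with the joint genericity of $t$ over the pair $(a,b)$. It is exactly this twofold consumption of the independent generics — once to make $F_{g(t)}\circ F_{a}$ reduce and once to keep the result generic over $b$ — that forces the hypothesis $\alpha>2N$ rather than the naive $\alpha>N$.
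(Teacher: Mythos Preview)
Your proof is correct and follows essentially the same approach as the paper: select some $t_i$ generic over $(a,b)$ by a dimension count, insert $F_t\circ F_{g(t)}$, and iterate Lemma~4.3(v) to collapse $F_{g(t)}\circ F_a\circ F_b$ to a single $F_d$, with the ``moreover'' clause coming from Lemma~4.3(iv). The paper's own proof is much terser---it simply says ``as in the proof of (vi) of the previous Lemma''---whereas you have carefully supplied the nontrivial bookkeeping (showing $t\in dcl(a,e)$ via Lemma~4.3(iii), hence $e$ generic over $b$) that the paper leaves implicit; this is exactly the point that needs checking, so your write-up is if anything an improvement.
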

\begin{proof}   Given $a,b$ we can find, by dimension considerations, some $t\in \{t_{1},..,t_{\alpha}\}$ such that $t$ is generic over $a,b$.  Consider  $F_{t}\circ F_{g(t)}\circ F_{a}\circ F_{b}$ which has the same germ as $F_{a}\circ F_{b}$.
But as in the proof of (vi) of the previous Lemma, $F_{g(t)}\circ F_{a}\circ F_{b}$ has the same germ as $F_{d}$ for some (generic) $d\in Y_{0}$. Hence $F_{a}\circ F_{b}$ has the same germ as $F_{t}\circ F_{d}$. 

The last part of the statement is Lemma 4.3 (iv). 
\end{proof}

We can now write  $G$ naturally as a definable (rather than interpretable) group, as follows. 

By Lemma 4.4 we can think of $G$ as covered by the charts $\{t_{i}\}\times Y_{0}$ (with $(t_{i},a)$ sent to the germ of $F_{t_{i}}\circ F_{a}$). In order to write this as a disjoint union, we remove from $\{t_2\}\times Y_0$ all germs which already appear in $\{t_1\}\times Y_0$, and then remove from $\{t_3\}\times Y_0$ all germs which already appear previously and so forth. This gives us our group living naturally on a collection of tuples of a given length.




On the face of it, the {\em definable} group $G$  is defined over $k$ together with parameters $t_{1},..,t_{\alpha}$. However as the real closure of $k$ is an elementary substructure of $\mathcal U$ and is also equal to the definable closure of $k$ in ${\mathcal U}$ we can choose $t_{1},..,t_{\alpha}$ in $dcl_{L}(k)$.\\

 

The end result is:
\begin{Lemma} The group of germs of $F_{a}\circ F_{b}$ for $a,b\in Y_{0}$ is an interpretable group $G$ which is the same as the set of germs of $F_{t_{i}}\circ F_{b}$, $i=1,..,\alpha$, $b\in Y_{0}$ which can be identified naturally as above with a definable (over $k$)  group.
\end{Lemma}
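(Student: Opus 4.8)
The plan is simply to assemble what Lemmas 4.3 and 4.4 have already provided and then do the bookkeeping that converts the interpretable group into a definable one. By Lemma 4.3(vi) the set $G$ of germs of $F_a\circ F_b$ (for $a,b\in Y_0$) is a group under composition of germs, with the inverse of the germ of $F_a\circ F_b$ being the germ of $F_{g(b)}\circ F_{g(a)}$. Since the relation $\sim$ (``same germ on $Y_0$'') is, by the discussion preceding Lemma 4.3, uniformly $k$-definable for $k$-definable families of functions, the equivalence relation $E$ on $Y_0^2$ given by $E((a,b),(a',b'))$ iff $F_a\circ F_b\sim F_{a'}\circ F_{b'}$ is $k$-definable. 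Hence $G=Y_0^2/E$ is interpretable over $k$ and the induced multiplication and inversion are $k$-interpretable. This yields the first assertion, that $G$ is an interpretable group.

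For the identification with germs of the form $F_{t_i}\circ F_b$, note first that trivially each germ of $F_{t_i}\circ F_b$ lies in $G$. Conversely Lemma 4.4 shows every element of $G$ is the germ of some $F_{t_i}\circ F_d$ with $t_i$ in the fixed finite set $\{t_1,\dots,t_\alpha\}$ and $d\in Y_0$; so the two collections coincide, and the natural map $\coprod_{i=1}^{\alpha}(\{t_i\}\times Y_0)\to G$ sending $(t_i,d)$ to the germ $[F_{t_i}\circ F_d]$ is surjective. By the final clause of Lemma 4.4, for each fixed $t_i$ the map $d\mapsto [F_{t_i}\circ F_d]$ is injective, so within each chart distinct parameters give distinct germs.

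To turn this into a bijection, and hence a definable structure, I would disjointify the charts exactly as indicated before the lemma: set $Z_1=\{t_1\}\times Y_0$ and inductively $Z_j=\{t_j\}\times\{d\in Y_0:[F_{t_j}\circ F_d]\neq [F_{t_i}\circ F_{d'}]\text{ for all }i<j,\ d'\in Y_0\}$. Each $Z_j$ is $k$-definable, since $E$ is $k$-definable and $RCF$ eliminates quantifiers, and the $Z_j$ partition a set $Z$ mapping bijectively onto $G$. I would then transport the group operations through this bijection: given $(t_i,d),(t_j,e)\in Z$, the product germ is $[F_{t_i}\circ F_d\circ F_{t_j}\circ F_e]$, which by Lemma 4.4 equals $[F_{t_\ell}\circ F_c]$ for a unique pair $(t_\ell,c)$, and the assignment $((t_i,d),(t_j,e))\mapsto(t_\ell,c)$ is $k$-definable because the relation $F_{t_i}\circ F_d\circ F_{t_j}\circ F_e\sim F_{t_\ell}\circ F_c$ is. Inversion is handled identically using the explicit formula from Lemma 4.3(vi). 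Finally, as the real closure of $k$ is an elementary substructure of $\mathcal U$ equal to $dcl_L(k)$, the $t_i$ may be chosen in $dcl_L(k)$, so the whole definable group is defined over $k$.

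The surjectivity and bijectivity are immediate from Lemmas 4.3 and 4.4, and the descent of parameters is routine. The step I expect to require the most care is verifying that the transported multiplication is a well-defined $k$-definable \emph{function} on $Z$: well-definedness rests precisely on the uniqueness (``determination'') clause of Lemma 4.4, which guarantees that $(t_\ell,c)$ is uniquely determined, while $k$-definability rests on the uniform $k$-definability of $\sim$ together with the definability of the disjointified charts $Z_j$. Everything else is bookkeeping.
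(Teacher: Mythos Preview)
Your proposal is correct and follows essentially the same route as the paper: the paper states Lemma 4.5 as a summary with no separate proof, the content being precisely the assembly of Lemma 4.3(vi), Lemma 4.4, the disjointification of the charts $\{t_i\}\times Y_0$, and the observation that the $t_i$ may be taken in $dcl_L(k)$. Your write-up is in fact more detailed than the paper's; the only minor imprecision is that uniqueness of the representative $(t_\ell,c)$ for a product germ comes from the bijection $Z\to G$ you have already built (via disjointification) rather than directly from Lemma 4.4, which only gives uniqueness of $c$ once $t_\ell$ is fixed---but you effectively note this yourself in your final paragraph.
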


\begin{Remark}
We know (\cite{Pillay-groupsandfields}) that a group definable in $RCF$ has ` definably the structure of a Nash group. In a subsequent paper, we will use this to get a Nash group structure on $G$, compatible in a sense with the way in which  $G$ originates from the $L_{\partial}$-definable group $\Gamma$.
\end{Remark}

We now return to consideration of the group $\Gamma$, $L_{\partial}$-definable in $({\cal U},\partial)$. So we revert to our notation distinguishing the different notions of generic etc. 

The proof of Theorem 4.1 is completed by the following, where we again work over $k$, so suppress $k$ from the 
notation. 
\begin{Lemma} There is an $L_{\partial}$-definable group embedding  $h$ of $\Gamma$ into $G$.
\end{Lemma}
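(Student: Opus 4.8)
I want to construct an $L_{\partial}$-definable map $h:\Gamma\to G$ and check that it is an injective group homomorphism. The natural guess, given the whole construction, is to send $g\in\Gamma$ to the element of $G$ that ``acts as left multiplication by $g$'', i.e.\ to the germ of $F_g = F(g,-)$. The first thing to record is that for $g\in\Gamma$, the partial semialgebraic map $F(g,-)=g*(-)$ agrees on $\Gamma$ with the actual group multiplication by $g$, and that $F_g$ satisfies the genericity condition (i) of the germ construction, so it has a well-defined germ on $Y_0$. By Lemma 4.3(v), for $g$ generic in $Y$ the germ of $F_g$ lies in $G$; but a general $g\in\Gamma$ need not be $L$-generic in $Y$, so I should express $F_g$'s germ as an element of $G$ via a two-step trick: pick $t\in\{t_1,\dots,t_\alpha\}$ generic over $g$, write the germ of $F_g$ as the germ of $F_t\circ F_{g(t)\cdot g}$ (using that $g(t)\cdot g$ is then generic, as in Lemma 4.4 and the proof of 4.3(vi)), which is a legitimate element of $G$ in the chart $\{t\}\times Y_0$. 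This is the definition of $h(g)$.

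\textbf{Definability.} Since the $t_i$ were chosen in $dcl_L(k)$, and the multiplication $*$ is semialgebraic over $k$, the element $g(t)\ast g$ is an $L$-definable (over $k$) function of $g$ once $t$ is fixed; the choice of a suitable $t$ among the finitely many $t_i$ (``the first $t_i$ generic over $g$'') is also $L$-definable. Hence on each piece $\{g:\ t_i \text{ is the chosen parameter}\}$ the map $h$ is given by a semialgebraic function into the corresponding chart of $G$. But $\Gamma$ itself is only $L_\partial$-definable, and $g\mapsto g$ composed with the semialgebraic data gives an $L_\partial$-definable map overall. So $h$ is $L_\partial$-definable over $k$.

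\textbf{Homomorphism and injectivity.} For the homomorphism property I must check $h(g_1 g_2)=h(g_1)h(g_2)$, i.e.\ that the germ of $F_{g_1 g_2}$ equals the composition of the germs of $F_{g_1}$ and $F_{g_2}$ in $G$. This is where the compatibility of $\ast$ with the $\Gamma$-multiplication and the associativity clauses of Lemma 4.2/Proposition 3.5 enter: for $y$ generic, $(F_{g_1}\circ F_{g_2})(y)=g_1\ast(g_2\ast y)=(g_1 g_2)\ast y=F_{g_1 g_2}(y)$, using that $g_1,g_2\in\Gamma$ and $y$ generic forces the relevant products to be genuine $\Gamma$-products reassociated by Lemma 4.2(i). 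For injectivity I use Lemma 4.3(iii): $g$ is determined by the germ of $F_g$, so $h(g_1)=h(g_2)$ forces $F_{g_1}\sim F_{g_2}$ hence $g_1=g_2$. One must take a little care that the ``germ of $F_g$'' for possibly non-generic $g\in\Gamma$ is still determined by $g$; this follows by evaluating at a $c\in\Gamma$ generic over $g$ and applying Lemma 4.2(iii) inside $\Gamma$.

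\textbf{Main obstacle.} I expect the genuine difficulty to be bookkeeping around \emph{non-generic} elements $g$ of $\Gamma$: the germ machinery of Lemmas 4.2--4.5 was developed for $L$-generic arguments in $Y$, whereas a homomorphism must be defined on \emph{all} of $\Gamma$, and typical $g\in\Gamma$ are \emph{not} $L$-generic in $Y$ (only $\partial$-generic elements of $\Gamma$ are $L$-generic in $Y$, by Lemma 3.4(ii)). The clean way around this is precisely the left-multiplication device: even when $g$ is not generic, $F_g$ still makes sense as a germ because it is defined and invertible on a large set of $\Gamma$-generic inputs (which \emph{are} $L$-generic in $Y$), and the identity $g_1\ast(g_2\ast y)=(g_1 g_2)\ast y$ needs only $y$ generic, not $g_1,g_2$. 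Verifying carefully that every germ-equation used (well-definedness, the homomorphism identity, and injectivity) survives with $g_1,g_2$ arbitrary in $\Gamma$ but the auxiliary points generic is the real content of the lemma; everything else is a direct appeal to Proposition 3.5 and Lemma 4.3.
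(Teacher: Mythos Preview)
Your approach is correct and is essentially the paper's, just packaged differently: the paper defines $h(\gamma)$ as the germ of $F_a\circ F_b$ where $\gamma=ab$ with $a,b$ $\partial$-generic in $\Gamma$, which makes membership in $G$ immediate but forces a well-definedness check; you define $h(g)$ as the germ of $F_g$ directly, which makes well-definedness immediate but forces a check that this germ lies in $G$. These are the same map, since for generic $y\in\Gamma$ one has $F_a(F_b(y))=a(by)=(ab)y=F_{ab}(y)$.

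Two small slips to fix. First, in the homomorphism step your appeal to Lemma~4.2(i) does not quite apply: that clause gives associativity for $x\in Y_0$ and a \emph{large set of pairs} $(y,z)$, not for the specific (possibly non-generic) $g_2$. The clean fix, which the paper also uses, is to take the test point $y$ to lie in $\Gamma$: by Lemma~2.7 applied over the differential field $k(g_1,g_2)$, every $L$-generic type of $Y$ over $k,g_1,g_2$ is realised in $\Gamma$, and then $g_1*(g_2*y)=(g_1g_2)*y$ is literal associativity in $\Gamma$. Second, your definability argument via ``the first $t_i$ generic over $g$'' is off if the $t_i$ are taken in $dcl_L(k)$ (they then have $L$-dimension $0$ over everything). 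Either work with genuinely generic $t_i$ and accept extra parameters, or argue as the paper does: the graph of $h$ is $\partial$-type-definable (``there exists $a\in\Gamma$ $\partial$-generic and $\partial$-independent from $g$ with output the germ of $F_a\circ F_{a^{-1}g}$''), hence definable by compactness.
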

\begin{proof} Let $\gamma \in \Gamma$.  Choose $a\in {\Gamma}$ $\partial$-generic over $\gamma$, and let 
$b = a^{-1}\gamma$ in $\Gamma$.  So $\gamma = ab$ in $\Gamma$.  Note that $b$ is also $\partial$-generic in $\Gamma$. Hence by Lemma 3.4 both $a$ and $b$ are $L$-generic in $Y$, so in  $Y_{0}$.  Define $h(\gamma)$ to be the germ of  $F_{a}\circ F_{b}\in G$.
We have to check several  things:
\newline
(i) $h$ is well-defined, i.e. $h(\gamma)$ does not depend on the choice of $a$ above,
\newline
(ii) $h$ is $\partial$-definable. 
\newline
(iii) $h$ is a group homomorphism, and
\newline
(iv) $h$ is injective. 
\newline
{\em Proof of (i).} Suppose $\gamma$ also equals $cd$ where $c,d$ are $\partial$-generic in  $\Gamma$, so $L$-generic in $Y_{0}$. 
We want to show that $F_{a}\circ F_{b}$ and $F_{c}\circ F_{d}$ have the same germ. Choose $x\in Y_{0}$ $L$-generic over $a,b,c,d$. By Lemma 2.7 we may assume that $x\in \Gamma$. Hence working in $\Gamma$, $abx = cdx$. But for generic independent (in either sense, by Lemma 3.4) elements of $\Gamma$, multiplication in $\Gamma$ is given by $*$, by Proposition 3.3 (iv).  As $b,x$ are independent generic elements of $\Gamma$, as well as $a, bx, c, dx$, it follows that $a*(b*x) = c*(d*x)$. But this means precisely that $F_{a}\circ F_{b}$ and $F_{c}\circ F_{d}$ agree at $x$. We have shown that $F_{a}\circ F_{b} \sim F_{c}\circ F_{d}$ as required. \\
(We could alternatively have made use of the fact (Proposition 3.3) that $F(a,b)$ coincides with the group operation in $\Gamma$, together with associativity (Lemma 4.2).)

\vspace{2mm}
\noindent
{\em Proof of (ii).}  It is enough, by compactness,  to show that the graph of $h$ is $\partial$-type-definable.
By the definition of $h$ and part (i), we have, for $\gamma \in \Gamma$ and $g\in G$, $h(\gamma) = g$ iff and only if for some $a\in \Gamma$ which is $\partial$-generic and $\partial$-independent from $\gamma$, $g$ is the germ of $F_{a}\circ F_{a^{-1}\gamma}$.  This suffices. 

\vspace{2mm}
\noindent
{\em Proof of (iii).} Suppose that $\gamma = ab$ and $\delta = cd$ with $a,b,c,d$ $\partial$-generic in $\Gamma$. 
So $h(\gamma)$ is the germ of $F_{a}\circ F_{b}$ and $h(\delta)$ is the germ of $F_{c}\circ F_{d}$.  We have to show that $h(\gamma\delta)$ is the germ of $F_{a}\circ F_{b}\circ F_{c}\circ F_{d}$.
As in the proof of Lemma 4.3 (v),  let $f\in \Gamma$ be $\partial$-generic and $\partial$-independent of $a,b,c,d$.  Then $\gamma\delta = (abf) (f^{-1}cd)$  (multiplication in $\Gamma$).  Let $x = abf$ and $y = f^{-1}cd$. Then $x,y$ are $\partial$-generic in $\Gamma$ and $\gamma\delta = xy$. Finally one has to check that that the germ of $F_{x}$ is  equal to the germ of $F_{a}\circ F_{b}\circ F_{f}$ and the germ of $F_{y}$ equals the germ of $F_{f^{-1}}\circ F_{c}\circ F_{d}$, which is like in the proof of (i).
So $h$ is a homomorphism.

\vspace{2mm}
\noindent
{\em Proof of (iv).} Given (i), (ii), (iii), we have to prove that if $h(\gamma)$ is the identity then $\gamma$ is the identity.  Suppose that $\gamma = ab$ as before and $F_{a}\circ F_{b}$ is the identity.  So $F_{a}\circ F_{b}$ has the same germ as $F_{a}\circ F_{a^{-1}}$. By Lemma 4.3 (iv), $b = a^{-1}$, so $\gamma = aa^{-1}$ is the identity of $\Gamma$.

\end{proof}

Let us record the following additional information which will be useful in future work. 
\begin{Remark} The embedding $h$ of $\Gamma$ in $G$ from Lemma 4.7 also satisfies:
\newline
(i) any $L$-generic type of $G$ over $k$ is realized by some element of (the image under $h$ of) $\Gamma$.
\newline
(ii) There is a covering of $G$ by finitely many $L_{k}$-definable subsets $X_{i}$, and there are suitable $L_{k}$ definable functions $s_{i}$ on $X_{i}$, such that for any $i$, if $a\in X_{i}$ is $L$-generic in $G$ over $k$ then $a$ is in (the image under $h$ of) $\Gamma$ iff $\partial(a) = s_{i}(a)$. 
\end{Remark}
\begin{proof} Everything can be seen by inspecting the proofs above. 
However some precisions are carried out in the more general Theorem 5.4 in the next section.
Anyway, with notation from the current (and previous)  section, from 4.3 (iii)  we have an injective map $h_{1}$ from $Y_{0}$ to $G$ taking $a\in Y_{0}$ to the germ of $F_{a}$.  We leave the reader to check that for $a\in \Gamma\cap Y_{0}$, $h_{1}(a) = h(a)$. 
By Theorem 5.4 (1) (and translating) $h_{1}(Y_{0})$ is large in $G$, so  $a\in Y_{0}$ is $L$-generic over $k$ iff $h_{1}(a)\in G$ is $L$-generic over $k$. 
Also note that $Y_{0}$ is large in $Y$ so contains all generics of $Y$. 
Now Lemma 3.4 gives a covering of $Y_{0}$ by the $V_{i}$, and for $a\in V_{i}\cap Y_{0}$, $a\in \Gamma$ iff $\nabla(a) = s_{i}(a)$. Moreover any $L$-generic over $k$ element of $Y_{0}$ has $L$-type realized by a ($\partial$-generic over $k$) element of $\Gamma$.
Applying the injective map $h_{1}$ to $Y_{0}$ gives (i) and (ii) of the Remark.

\end{proof}

\section{Geometric structures and fields}

In this section we present some straightforward generalizations of the methods and results from the earlier sections.

\subsection{Geometric structures and generically defined groups}
We are here interested in generalizing that aspect of Section 4 which produces a semialgebraic group from the data  (a semialgebraic pre-group) in Proposition 3.5.  There are various kinds of generalizations, including just axiomatizing the proof. But a fairly clearly stated and useful generalization takes place in the context of  geometric structures from \cite{Hrushovski-Pillay-groupslocalfields}. We recall the definition and give a quick summary of properties from Section 2 of \cite{Hrushovski-Pillay-groupslocalfields}. 

\begin{Definition} A geometric structure  is a one sorted structure $M$ in a language $L$  in which algebraic closure yields a pregeometry in any model of $Th(M)$ and which has elimination of the $\exists^{\infty}$  quantifier, namely for each formula $\phi(x,
{\bar y})$ there is $N_{\phi}$ such that for any ${\bar b}$ in $M$, $\phi(x,{\bar b})(M)$ is infinite if it has cardinality $>N_{\phi}$.  
\end{Definition}

Geometric structures include strongly minimal structures, o-minimal; structures, structures of $SU$-rank $1$,  as well as $p$-adically closed fields.  Moreover in the case of real closed fields, or $p$-adically closed fields,  model-theoretic algebraic closure coincides with (relative)  field-theoretic algebraic closure.

\vspace{5mm}
\noindent
Let us now fix a saturated geometric structure $M$, where as usual $A,B$ denote small subsets and $a,b,,..$ will now denote finite tuples of elements of $M$.

We define $dim(a/A)$ to be the (unique) cardinality of a maximal algebraically independent over $A$ subtuple of $a$. 
Then $a$ and $b$ are said to be independent over $A$ if $dim(a,b/A) = dim(a/A) + dim(b/A)$. For $X\subseteq M^{n}$ definable (or even type-definable) over $A$, $dim(X) = max\{dim(a/A): a\in X\}$. $a\in X$ is said to be {\em generic over $A$} in $X$ if $dim(X) = dim(a/A)$.  Note that for $p$ a complete $n$ type over $A$, $dim(p) = dim(a/A)$ for some/any realization $a$ of $p$. For tuples $a,b$, we have
$$\dim(a,b/A)=\dim(a/bA)+\dim(b/A).$$

Elimination of the $\exists^{\infty}$ quantifier implies that for any $L$-formula $\phi(x,y)$ and $m$, $\{b: dim(\phi(x,b)) = m\}$ is definable over $\emptyset$ in $M$.

In the case of  $M$ being a real closed field, $dim$ and independence coincide with what was described in Section 2.

\begin{Definition} By a {\em generically defined group} (or a pre-group in $M$) we mean a definable set $X\subseteq M$  (defined over $A$ say) together with $A$-definable  partial operations $F: X\times X \to X$, and $g:X\to X$ such that:
\newline
(1) For any  $a,b\in X$, generic and independent over $A$, $c = F(a,b)$ is defined, and each of $a,b,c$ is in the definable closure of $A$ and the other two, 
\newline
(2)  For $a,b,c \in X$, generic and independent over $A$, $F(F(a,b),c) = F(a,F(b,c))$. 
\newline
(3) For $a\in X$ generic over $A$, $g(a)$ is defined and generic in $X$ over $A$.  Moreover, if $b$ is also generic in $X$ over $A$ and independent of $a$ over $A$, we have $b = F(g(a),F(a,b))$, and $a = F(F(a,b), g(b))$. 
\end{Definition}


\begin{Remark}
(i) We could replace (3) in the definition above, by
\newline 
(3)': there are $A$-definable partial functions $\mu$, $\lambda$ from $X\times X$ to $X$, which satisfy (1), and if $a,b,c\in X$ are generic and pairwise independent over $A$, then $F(a,b) = c$ iff $\mu(a,c) = b$ iff $\lambda(b,c) = a$. 
\newline
(ii) When $X$ is an an irreducible algebraic variety, and the ambient structure $M$  is an algebraically closed field (of characteristic $0$ say), then parts (1) and (2) of the definition above are precisely the definition of a ``pre-group equipped with a normal law of composition $f$" as in 
 \cite{Weil}. 
\end{Remark}

\noindent
{\em Explanation.} (i) (3)' is easily a consequence of (1), (2), (3). And (3) follows from (1), (2), (3)' by copying the proof of Proposition 1 in Weil's \cite{Weil}.
\newline
(i) In this context of $ACF_{0}$
for each $n$ there is a unique $n$-type of $n$ generic, independent elements of $X$. Then one deduces (3)' as in the proof of Proposition 1 of \cite{Weil}.       \qed

\begin{Theorem}  Let $(X,F,g)$ be a generically defined group in $M$, as in Definition 5.2. Then there is a group $(G,\cdot)$ definable in $M$ over a set $B\supseteq A$, 
with $dim(G) = dim(X)$, and a partial $B$-definable function $h:X\to G$ such that:
\newline
(1) $h$ establishes a bijection between generic over $B$ points of $X$ and generic over $B$ points of $G$, so in particular $h(X)$ is large in $G$, 
\newline
(2) For $a,b\in X$ generic and independent over $B$, $h(F(a,b)) = h(a)\cdot h(b)$.
\newline
(3) For $a\in X$ generic over $B$, $h(g(a)) = h(a)^{-1}$.
\newline
Moreover when $A$ is an elementary substructure of $M$ we can choose $B = A$. 
\end{Theorem}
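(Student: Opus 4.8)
The plan is to reconstruct the group $G$ as the set of germs of composite partial functions $F_a \circ F_b$ ($a,b \in X$), exactly mirroring the $RCF$ construction in Lemmas 4.3--4.5, but now using only the abstract dimension theory of the geometric structure $M$. First I would transcribe the preliminary facts: using Definition 5.2(1)--(3), show that $g$ is a generic bijection of $X$, that each $F_a$ is generically invertible (its germ has inverse-germ $F_{g(a)}$ on a large set), and that the germ of $F_a$ determines $a$ (so $a \mapsto \mathrm{germ}(F_a)$ is injective on generics). The key point for germs is that the equivalence relation ``$f_1,f_2$ agree on a large subset of $X$'' is definable in uniformly definable families, which uses elimination of $\exists^\infty$ to get definability of dimension --- this is available in any geometric structure.

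Next I would prove the two crucial algebraic lemmas. The composition lemma (analogue of Lemma 4.3(v)): for $x \in X$ and $a$ generic over $x$, the germ of $F_x \circ F_a$ equals the germ of $F_b$ for some $b \in \mathrm{dcl}(x,a)$; this follows from associativity 5.2(2) since $F(x,F(a,y)) = F(F(x,a),y)$ for generic independent $y$, and $b := F(x,a)$ lies in $\mathrm{dcl}(x,a)$ by 5.2(1). Then the closure-under-composition argument (analogue of Lemma 4.3(vi)): inserting a generic $F_e \circ F_{g(e)}$ and iterating the composition lemma shows $F_a \circ F_b \circ F_c \circ F_d$ has the germ of some $F_x \circ F_y$, so the set $G$ of germs of two-fold composites is closed under composition and inversion, hence a group. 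The dimension bookkeeping here is identical to Section 4 but phrased via $\dim(a,b/A) = \dim(a/bA) + \dim(b/A)$, which holds in $M$.

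To realize $G$ as a \emph{definable} (not merely interpretable) set without assuming elimination of imaginaries, I would run the chart argument of Lemma 4.4: fix $\alpha > 2\dim(X)$ independent generics $t_1,\dots,t_\alpha$ and show every germ $F_a \circ F_b$ equals some $F_{t_i} \circ F_d$ with $d$ determined by $t_i$ and the germ; gluing the charts $\{t_i\}\times X$ and deleting repetitions exhibits $G$ on a definable set of tuples. This forces the parameter set $B = A \cup \{t_1,\dots,t_\alpha\}$. The map $h$ is then defined by $h(\gamma) = \mathrm{germ}(F_a \circ F_b)$ where $\gamma = F(a,b)$ with $a$ generic over $\gamma$ (so $b$ is too); well-definedness, the homomorphism property 5.2(2), and that $h$ carries $g$ to inversion 5.2(3) follow by the generic computations already done, and largeness of $h(X)$ gives (1). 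For the final sentence, when $A \prec M$ the generics $t_i$ can be chosen in $\mathrm{dcl}(A) = A$ by saturation and the fact that $A$ is an elementary substructure, so $B = A$.

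The main obstacle I expect is the germ formalism in the absence of nice geometric coordinates: in $RCF$ one freely uses definability of dimension and that large definable subsets are closed under finite intersection, and I must check these transfer cleanly from Definition 5.1 (elimination of $\exists^\infty$ plus the pregeometry) --- in particular that ``$f$ is defined on a large set with values in $X$'' is equivalent to ``$f(y)\in X$ for $y$ generic over the parameters of $f$,'' and that this is a definable condition in families. The other delicate point is tracking that the constructed $G$ has the same dimension as $X$, which requires that $h_1 : a \mapsto \mathrm{germ}(F_a)$ be injective on generics and have large image, so $\dim G = \dim(h_1(X)) = \dim X$; this is where the injectivity result (analogue of 4.3(iii)) does real work.
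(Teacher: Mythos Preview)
Your proposal is correct and follows essentially the same route as the paper: transcribe Lemmas 4.2--4.5 to the geometric-structure setting using definability of dimension, build $G$ as the group of germs of $F_a\circ F_b$, and realize it definably via the chart argument with independent generics $t_1,\dots,t_\alpha$. One simplification worth noting: the paper defines $h$ directly as $h(a)=\mathrm{germ}(F_a)$ on the large subset $X_0$, rather than your $h(\gamma)=\mathrm{germ}(F_a\circ F_b)$ with $\gamma=F(a,b)$ (which is the map from Lemma~4.7, a different statement); your definition agrees with the paper's on generics by the very composition lemma you prove, but the simpler choice makes properties (1)--(3) immediate and avoids the well-definedness check.
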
 
\begin{proof}
This is given by the proofs in Section 4 which work under the more general assumptions.  
Here are some details.  After changing some symbols, the definition of $(X,F,g)$ being a generically defined group (over $A$), gives the statements in Proposition 3.5 above.

Our notation is now almost the same as in Section 4, except we have $X$ in place of $Y$ and $A$ in place of the field $k$ (over which everything is defined).  By the definability properties of geometric structures, Lemma 4.2 goes through giving the suitable large subset $X_{0}$ of $X$ defined over $A$  (the ``group chunk").  

Now we consider definable (with additional parameters) functions from $X_{0}$ to $X_{0}$ and their germs, where recall that $f_{1}$ and $f_{2}$ have the same germ if  whenenever $a\in X_{0}$ is generic over $A$ together with the parameters in $f_{1}$ and $f_{2}$, $f_{1}(a) = f_{2}(a)$. Equivalently there is a large definable subset of $X_{0}$ on which $f_{1}$ and $f_{2}$ agree. 

Then Lemma 4.3 goes through, giving the interpretable group $G_{0}$ as the group of germs of $F_{a}\circ F_{b}$ for $a,b\in X_{0}$.  Now Lemmas 4.4 and 4.5 go through to find elements $t_{1},..,t_{\alpha}$ of $X_{0}$ such that taking $B = A\cup\{t_{1},..,t_{\alpha}\}$, $G_{0}$ is definably over $B$ isomorphic with a {\em definable} (over $B$) group $G$.  As in the comments before Lemma 4.5, when $A$ is an elementary substructure of $M$, the $t_{i}$ can be found in $A$ already.  
As $G$ is covered by finitely many copies of $X_{0}$, and $X_{0}$ is large in $X$ we already see that $dim(G) = dim(X)$.  

We now choose $h$. There  is no harm assuming $A = B$. 
Let $h$ be the $A$-definable function on $X_{0}$ taking $a\in X_{0}$ to the germ of $F_{a}$.  So by (the general version of) Lemma 4.3(iii), $h$ is an injective $A$-definable function from $X_{0}$ to $G$. (So also $h$ is a partial $A$-definable function from $X$ to $G$ defined on the generic points of $X$.)  As $dim(X) = dim(G)$, we see immediately that  for $a\in X_{0}$ generic over $A$, $h(a)$ is generic in $G$ over $A$. 


Conversely, to  finish the proof of (1) we want to check that every generic over $A$ element of $G$ is the germ of $F_{a}$ for some generic over $A$ element $a$ of $X_{0}$.  From what we have just seen, $dim(X) = dim(G)$. Let $g$ be generic over $A$ in $G$. So we can write $g$ as the germ of $F_{u}\circ F_{v}$ for $u,v\in X_{0}$.  Let $b\in X$ be generic over $A, u, v$. Then $g$ times the germ of $F_{b}$ is the germ of $F_{a}$ for some generic $a\in X_{0}$ over $A$ by applying Lemma 4.3(v) twice. But then a dimension count shows that $a$ and $b$ are independent over $A$, as are $a$ and $g(b)$.  Then the germ of $F_{a}$ times the germ of $F_{g(b)}$ equals $g$, but also equals the germ of $F_{c}$ for $c$ generic in $X$ over $A$ by Lemma 4.3 (v). We have shown that $g$ is the germ of $F_{c}$ for $c$ generic in $X$ over $A$.  This completes the proof of (1).  

For (2), if $a,b$ are generic and independent over $A$ and are in $X_{0}$ then $F(a,b)$ is generic over $A$  so in $X_{0}$. $h(a)$ is the germ of $F_{a}$, $h(b)$ is the germ of $F_{b}$, and, since $F(a,F(b,c))=F(F(a,b),c)$, then the germ of $F_{a}\circ F_{b}$ is the germ of $F(a,b)$, giving (2). 

 (3) is immediate.
\end{proof}

\subsection{Geometric fields with a generic derivation}
We will now place the main results for finite-dimensional groups definable in $RCF_{\partial}$ in a certain general context which subsumes other cases of interest.

We are concerned first with certain theories of fields in the ring language $L$ possibly with additional constant symbols.  Then we will add a ``generic derivation" and work in $L_{\partial}$.

We will repeat some definitions from \cite{Hrushovski-Pillay-groupslocalfields} and \cite{BCPP}.
The notion of a ``geometric field" was introduced in \cite{Hrushovski-Pillay-groupslocalfields} in terms of ``geometric substructures" of algebraically closed fields. But in Remark 2.10 of \cite{Hrushovski-Pillay-groupslocalfields} the following characterization was given which we may take as a definition.

\begin{Definition} A geometric field is a perfect field $F$, considered as an $L$-structure, where $L$ is the language of rings with possibly additional constant symbols, such that in any model of $Th(F)$, model-theoretic algebraic closure coincides with field-theoretic (relative) algebraic closure, and for any $L$-formula $\phi(x,{\bar y})$ ($x$ a single variable) there is $N_{\phi}$ such that in any model $F'$ of $Th(F)$ and for any ${\bar b}$ in $F'$, $\phi(x,{\bar b})$ has finitely many solutions in $F'$ iff it has at most $N_{\phi}$ solutions.
\end{Definition} 

Note that a geometric field $K$ is also a geometric structure.  Moreover if $\bar a$ is an $n$-tuple from $K$ and $k$ a subfield, relatively algebraically closed in $K$ if one wishes, then $dim({\bar a}/k) = dim(V)$ where $V$ is the variety over $k$ with generic over $k$ point ${\bar a}$.


In a recent preprint, Johnson and Ye showed that if $M$ is a pre-geometric field, namely the model-theoretic algebraic closure has  exchange, then $M$ is geometric, namely it eliminates $\exists^{\infty}$ (and it is perfect) \cite[Theorem 5]{JY}.

We will restrict our attention to characteristic $0$. We recall the strengthening of model-completeness from \cite{BCPP}:
\begin{Definition} (i) Let $T$ be a theory of fields in the language $L$  of rings. 
We say that $T$ has almost quantifier elimination (almost QE) if whenever $F$ is a model of $T$, $A$ is a relatively algebraically closed subset of $F$ (in the field-theoretic sense) and ${\bar a}$ is an enumeration of $A$,  then $T\cup qftp({\bar a})$  implies a complete type $p({\bar x})$ (over $\emptyset$) of $T$.  
\newline
(ii) 
 Let $T'$ be a theory of differential fields in the language $L_{\partial}$. We say that $T'$ has almost QE, if whenever $(K,\partial)$ is a model of $T'$, and $k$ is a differential subfield of $K$ which is relatively algebraically closed in $K$ in the sense of fields, and ${\bar a}$ is an enumeration of $k$ then $T'\cup\{qftp({\bar a})\}$ implies a complete $L_{\partial}$-type (over $\emptyset$). 
\end{Definition}

Note that model completeness of $T$  is the special case of Definition 5.6(i) when $A = F$. 

We have already defined large fields. Here is a summary of relevant results from \cite{BCPP} and \cite{Tressl}.
\begin{Fact} Let $T$ be a model complete theory of large fields in the language $K$. Then:
\newline
(i) $T \cup \{\partial$ is a derivation\} has a model companion $T_{\partial}$ in the language $L_{\partial}$.
\newline
(ii) (The geometric axioms.) $(K,\partial)\models T_{\partial}$ iff $K\models T$, and whenever $(V,s)$ is an irreducible  rational $D$-variety over $K$ with a smooth $K$-point, and $U$ is a Zariski open subset of $V$ defined over $K$ then there is $a\in U(K)$ with $s(a) = \nabla(a)$.
\newline
(iii) If $T$ has quantifier elimination in a definitional expansion $L^{*}$, then $T_{\partial}$ has quantifier elimination in $L^{*}_{\partial}$.
\newline
(iv)  If $T$ has almost QE, then so does $T_{\partial}$. 
\end{Fact}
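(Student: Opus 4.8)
The plan is to reduce all four parts to the single realization principle encoded by the geometric axioms in part (ii): in any model $(K,\partial)$ of $T_{\partial}$, every irreducible rational $D$-variety $(V,s)$ over $K$ with a smooth $K$-point has, on every Zariski-open $U$ over $K$, a point $a\in U(K)$ with $\nabla(a)=s(a)$. The other ingredient I would isolate first is purely syntactic: since a differential polynomial in $\bar x$ of order $\leq N$ is an ordinary polynomial in the jet tuple $\nabla^{(N)}(\bar x)$, any quantifier-free $L_{\partial}$-formula over $K$ is a quantifier-free $L$-formula in the jet variables together with the ``successor'' conditions $\partial(\bar x_{i})=\bar x_{i+1}$. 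This is the analogue of Fact 2.1(i) and lets me trade differential data for a variety-with-section, so that the geometric axioms become a realization tool in back-and-forth arguments.

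For (i) and (ii) I would take the geometric axioms as a candidate axiomatization and show they define the model companion, which needs three things. First, the axioms form a genuine first-order scheme (the delicate point, discussed below). Second, every model $(K,\partial)$ of $T\cup\{\partial$ is a derivation$\}$ embeds in a model of the axioms: given any $(V,s)$ and $U$ over $K$, take a generic point $\bar a$ of $V$ over $K$ in a field extension, note $\bar a\in U$ by genericity, write $s(\bar a)=(\bar a,b)$, and observe that the defining equations of $T_{\partial}(V)$ say exactly that setting $\partial(\bar a)=b$ extends the derivation to $K(\bar a)$ (extension theorem for derivations, Theorem 1.1 of \cite{Pierce-Pillay}); embedding the result into a model of $T$, extending the derivation, and iterating transfinitely over all such data yields an extension satisfying all the axioms. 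Third, any two models of the axioms, one contained in the other, are elementarily equivalent over the smaller one: this is a back-and-forth in which each step realizes, via the axioms, the rational $D$-variety attached to a jet $\nabla^{(N)}$, exactly as in the proof of Lemma 2.7. Mutual model-consistency with $T\cup\{\partial$ is a derivation$\}$ together with this model-completeness identifies the axiom class as the model companion $T_{\partial}$, proving both (i) and (ii).

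For (iii), assuming $T$ has quantifier elimination in the definitional expansion $L^{*}$, I would show $T_{\partial}$ is substructure-complete in $L^{*}_{\partial}$ by back-and-forth between models $(M_{1},\partial)$ and $(M_{2},\partial)$ sharing a common $L^{*}_{\partial}$-substructure $A$. Given $\bar a$ from $M_{1}$, the jet $\nabla^{(N)}(\bar a)$ is the generic point of a variety $V$ over $A$ carrying the section $s$ induced by $\partial$; quantifier elimination for $T$ in $L^{*}$ pins down the $L^{*}$-type of $\nabla^{(N)}(\bar a)$ over $A$ from $V$, and the geometric axioms in $M_{2}$ realize $(V,s)$, producing $\bar b\in M_{2}$ with the same quantifier-free $L^{*}_{\partial}$-type over $A$. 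Part (iv) is the same argument with $A$ now a relatively algebraically closed differential subfield of which only the $L_{\partial}$-quantifier-free type is known: here the almost-QE hypothesis on $T$ provides that the $L$-type of each jet over the relatively algebraically closed $A$ is determined by its quantifier-free data, which again feeds the realization-and-back-and-forth and yields almost QE for $T_{\partial}$.

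The main obstacle I anticipate is the first step of (i): the uniform first-order expressibility of the geometric axioms. One must verify that ``$(V,s)$ is an irreducible rational $D$-variety with a smooth $K$-point'' and ``$U$ is a nonempty Zariski-open set'' are definable uniformly in families of coefficients, so that quantifying over such data produces an honest scheme of $L_{\partial}$-sentences. This rests on the uniform definability of dimension, irreducibility and smoothness in the geometric-field setting, together with largeness to pass from a smooth $K$-point to a Zariski-dense set of $K$-points, and is exactly the content of Tressl's uniform-companion construction. By comparison, the differential-algebraic reductions and the back-and-forth steps underlying (ii)--(iv) are comparatively routine adaptations of the $RCF_{\partial}$ arguments already carried out in Sections 2 and 3.
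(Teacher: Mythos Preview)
The paper does not actually prove this statement: it is labelled a \emph{Fact}, and the accompanying ``Explanation'' simply cites Tressl \cite{Tressl}, 7.1 and 7.2, for all four parts, noting that (ii) and (iv) are also proved in \cite{BCPP}. So there is no paper-proof to compare your proposal against; you have written a genuine sketch where the authors chose to quote the literature.

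Your sketch is broadly correct and indeed follows the standard line of argument one finds in those references: take the geometric axioms as the candidate axiomatization, verify first-order expressibility (which you rightly flag as the nontrivial uniformity issue, and correctly attribute to Tressl's uniform-companion machinery), check model-consistency via the extension theorem for derivations, and establish model-completeness by a back-and-forth in which one realizes the $D$-variety attached to a jet. One small imprecision: in your treatment of (iii) you say ``quantifier elimination for $T$ in $L^{*}$ pins down the $L^{*}$-type of $\nabla^{(N)}(\bar a)$ over $A$ from $V$, and the geometric axioms in $M_{2}$ realize $(V,s)$''. The variety $V$ alone does not determine the $L^{*}$-type, and the geometric axioms as stated in (ii) only guarantee points in Zariski-open sets, not in arbitrary $L^{*}$-definable sets. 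The clean way to run the back-and-forth step is the one you use elsewhere (and the paper uses in Lemmas 2.7 and 5.8): realize the full $L^{*}$-type of the jet in an elementary extension of $M_{2}\restriction L^{*}$, extend the derivation as dictated by $s$, and then invoke model-completeness of $T_{\partial}$ to embed back over $M_{2}$. With that adjustment your outline is sound.
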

\noindent
{\em Explanation.} Everything can be extracted from Tressl's paper \cite{Tressl}, 7.1 and 7.2. But (ii) and (iv) are also proved in \cite{BCPP}. (See also Remark 2.4 in \cite{BCPP}.)

\vspace{5mm}
\noindent
We now generalize Lemma 2.7, even though in the proof of 2.7 we made use of $RCF_{\partial}$ having QE in the ordered differential field language. 

\begin{Lemma} Let $T$ be a model-complete theory of large geometric fields. 
Let $T_{\partial}$ be the model companion of $T\cup\{\partial$ is a derivation\}. 
Let $(K,\partial)\models T_{\partial}$ and $(V,s)$ be a (irreducible) rational $D$ variety over $K$ such that $V$ has a Zariski-dense set of $K$-points. Let $X$ be a definable (in $K$) subset of $V(K)$ with $dim_{L}(X) = dim(V) (=dim_{L}(V(K)))$. Then there is some $a\in X$ with $s(a) = \nabla(a)$.
\end{Lemma}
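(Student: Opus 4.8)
The plan is to reduce the statement to a first-order existence sentence and prove it by building one cleverly chosen model. Let $\bar c$ be a tuple from $K$ over which $X$, $V$ and the section $s$ are all defined, and let $k\subseteq K$ be the differential subfield generated by $\bar c$; thus $V$ and $s$ are defined over $k$, while $X$ is $L$-definable over $k$. Write $\psi$ for the $L_\partial$-sentence (with parameters $\bar c$) asserting that there is $x$ with $x\in X$, $Q(x)\neq 0$ (so that $s$ is defined at $x$) and $s(x)=\nabla(x)$. Since $T_\partial$ is model complete and $(K,\partial)\models T_\partial$, any two models of $T_\partial\cup\mathrm{Diag}(K,\partial)$ contain $(K,\partial)$ elementarily, so this theory is complete; hence it suffices to exhibit \emph{one} model $(N,\partial_N')\models T_\partial$ admitting a differential embedding of $(K,\partial)$ in which $\psi$ holds.

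To build such a model I mimic the proof of Lemma 2.7, replacing the use of quantifier elimination in the ordered differential ring language (unavailable here, and with almost QE not assumed) by the model companion property. First pass to a $|K|^{+}$-saturated elementary extension $K'\succeq K$ of pure fields (so $K'\models T$, carrying no derivation yet). Because $dim_L(X)=dim(V)$ and dimension is definable in geometric structures, saturation yields $a\in X(K')$ with $dim_L(a/K)=dim(V)$; thus $a$ is Zariski-generic in $V$ over $k$ (indeed over $K$) by Remark 2.3, $s$ is defined at $a$, and $s(a)=(a,b)$ with $(a,b)\in T_\partial(V)$. As $a$ is generic over $K$ and $V$ is defined over $k$, the vanishing ideal of $a$ over $K$ is generated by $I_k(V)$, so the displayed $D$-variety equations are exactly the compatibility condition required by the extension theorem for derivations (Theorem 1.1 of \cite{Pierce-Pillay}): extend $\partial$ from $K$ to $K(a)$ by setting $\partial(a)=b$, then to a derivation $\partial_N$ of all of $K'$. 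Now $(K',\partial_N)\models T\cup\{\partial\text{ is a derivation}\}$, it contains $(K,\partial)$ as a differential subfield, and $a\in X(K')$ with $s(a)=\nabla(a)$, so $(K',\partial_N)\models\psi$. Finally, as $T_\partial$ is the model companion of $T\cup\{\partial\text{ is a derivation}\}$, there is a differential embedding $\iota$ of $(K',\partial_N)$ into some $(N,\partial_N')\models T_\partial$; its restriction to $(K,\partial)$ shows $(N,\partial_N')\models\mathrm{Diag}(K,\partial)$. Model completeness of $T$ makes $\iota$ elementary as a map of $L$-structures, so $\iota(a)\in X(N)$, and $\iota$ preserves the quantifier-free $L_\partial$-condition $s(x)=\nabla(x)$; hence $(N,\partial_N')\models\psi$.

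This produces a model of $T_\partial\cup\mathrm{Diag}(K,\partial)\cup\{\psi\}$, so by completeness $T_\partial\cup\mathrm{Diag}(K,\partial)\vdash\psi$ and therefore $(K,\partial)\models\psi$, which is the assertion of the lemma. I expect the main obstacle to be precisely the descent to the given, possibly non-saturated model and to the prescribed set $X$: unlike in Lemma 2.7 one cannot lean on quantifier elimination in an ordered language, so the whole argument must be routed through the completeness of $T_\partial\cup\mathrm{Diag}(K,\partial)$ and through the freedom to manufacture a \emph{new} derivation at a generic point of $X$ rather than using the ambient one. A secondary point needing care is verifying that a point of $X$ generic over the full field $K$ exists and that its algebraic relations over $K$ reduce to $I_k(V)$, so that the extension theorem applies with the $k$-rational $D$-variety data.
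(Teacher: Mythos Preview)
Your proof is correct and follows essentially the same route as the paper's: pass to a saturated $L$-elementary extension of $K$, pick a point of $X$ generic over $K$, use the extension theorem for derivations to make $\nabla(a)=s(a)$, extend the derivation to the whole field, and then embed into a model of $T_\partial$, using model-completeness of $T_\partial$ to pull the existential statement back to $(K,\partial)$. Your framing via completeness of $T_\partial\cup\mathrm{Diag}(K,\partial)$ is exactly model-completeness unpacked, and you are slightly more explicit than the paper in invoking model-completeness of $T$ to ensure the $L$-definable condition ``$x\in X$'' is preserved under the final embedding; the paper's ``This suffices'' leaves that step implicit.
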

\begin{proof} The proof is like that of Lemma 2.7, with the proviso mentioned earlier.   Let $F$ be a saturated elementary extension of the field $K$ in the language $L$. Let $p(x)$ be a complete type over $K$ containing the formula $x\in X$, and with $dim(p) = dim(X)$. 
Let $a$ realize $p(x)$ in $F$. So $a$ is a generic point of $V$ over $K$. Let $s(a) = (a,b)\in F$, and as in the proof of Lemma 2.7 we can extend the derivation $\partial$ to a derivation $\partial^{*}$ of $F$ such that $\partial^{*}(a) = b$. 
So $(F,\partial^{*})$ is a model of $T$ together with $\partial^{*}$ is a derivation, so embeds in a model of $T_{\partial}$ which by model-completeness is an elementary extension of $(K,\partial)$. This suffices.
\end{proof}

\begin{Lemma} \label{acl}Let $T$ be a model complete theory of large geometric fields. Then in any model $(K,\partial)$ of $T_{\partial}$, the model theoretic algebraic closure of a set $A$ equals the field-theoretic algebraic closure of the differential subfield of $K$ generated by $A$.
\end{Lemma}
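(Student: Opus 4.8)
The plan is to prove both inclusions, with the reverse one being the substantive part. For the easy direction, field-theoretic algebraic closure of $k\langle A\rangle$ sits inside $acl_{L_\partial}(A)$: the differential field $k\langle A\rangle$ generated by $A$ lies in $dcl_{L_\partial}(A)$ (field operations and $\partial$ are in $L_\partial$), and any element field-algebraic over $k\langle A\rangle$ is a root of a nonzero polynomial with coefficients in $dcl_{L_\partial}(A)$, hence satisfies an $L_\partial$-formula over $A$ with finitely many solutions. Using this, I would first reduce to the clean case: set $k$ to be the field-theoretic relative algebraic closure in $K$ of $k\langle A\rangle$. This $k$ is a relatively algebraically closed differential subfield (the derivation extends uniquely to the algebraic closure), it contains $A$, and by the easy direction $k\subseteq acl_{L_\partial}(A)$, so $acl_{L_\partial}(A)=acl_{L_\partial}(k)$. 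The lemma thus reduces to showing $acl_{L_\partial}(k)=k$ for such $k$.

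For the reverse inclusion I would argue the contrapositive: if $a\in K$ is transcendental over $k$ (equivalently $a\notin k$, since $k$ is relatively algebraically closed), then $tp_{L_\partial}(a/k)$ is non-algebraic. First suppose $a$ has finite order over $k$. Then, exactly as in the construction underlying Fact 3.2 and Proposition \ref{prop3.3}, for suitable $N$ the tuple $c=\nabla^{(N)}(a)$ is a generic point over $k$ of an irreducible $k$-variety $V$, there is a rational section $s\colon V\to T_\partial(V)$ over $k$ with $s(c)=\nabla(c)$, and $dim(V)=trdeg(k(c)/k)\geq 1$ because $a$ is transcendental. To exhibit many realizations of $tp_{L_\partial}(a/k)$, I would apply the geometric axioms (Fact 5.7(ii)), in the definable-set form of Lemma 5.9, to the product $D$-variety $(V^n, s^{\times n})$ over $k$, for each $n$: fixing the generic $L$-type $q=tp_L(c/k)$ and using Lemma 5.9 together with compactness (each formula of $q^{\otimes n}$ cuts out a full-dimensional definable subset of $V^n$, so the $D$-equations remain solvable inside it), I obtain $b_1,\dots,b_n$, each solving $\nabla(b_i)=s(b_i)$ with $tp_L(b_i/k)=q$, and pairwise field-independent over $k$, hence distinct. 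Writing $b_i=\nabla^{(N)}(a_i)$, the equation $\nabla(b_i)=s(b_i)$ forces every higher derivative of $a_i$ to be the same rational function of $b_i$ as for $a$, so $tp_L(\nabla^{(m)}(a_i)/k)=tp_L(\nabla^{(m)}(a)/k)$ for all $m$. The infinite-order case is handled analogously but using the tower of prolongation varieties $V_N=\mathrm{loc}(\nabla^{(N)}(a)/k)$ together with their prolongations $T_\partial(V_N)$ and a compactness passage to the limit, rather than a single $D$-variety.

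The main obstacle is the final link: knowing that the $a_i$ produced above realize the \emph{same} $L_\partial$-type as $a$ over $k$, i.e. that agreement of the $L$-types of all prolongations $\nabla^{(m)}(\,\cdot\,)$ over $k$ forces agreement of the full $L_\partial$-type over $k$. In $RCF_\partial$ this is immediate from quantifier elimination (Fact 2.1(i)), but here only model-completeness of $T$ is assumed, so I would have to establish it directly. The map $\nabla^{(m)}(a)\mapsto\nabla^{(m)}(a_i)$ assembles into an $L$-elementary isomorphism $k\langle a\rangle\to k\langle a_i\rangle$ over $k$ that commutes with $\partial$, and the task is to upgrade this to an $L_\partial$-elementary map. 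This is precisely an almost-QE phenomenon (Definition 5.6), and I expect it to follow from a back-and-forth that extends such $L$-elementary differential isomorphisms using Lemma 5.9 and the model-completeness of $T_\partial$ — essentially the same argument that makes $T_\partial$ the model companion. Granting this, the $b_i$ give $n$ distinct realizations of $tp_{L_\partial}(a/k)$ for every $n$, so the type is non-algebraic, $a\notin acl_{L_\partial}(k)$, and the reduction above completes the proof.
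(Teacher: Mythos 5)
Your overall strategy coincides with the paper's: reduce to showing that if $a$ is field-transcendental over a differential subfield $k$ then $tp_{L_\partial}(a/k)$ is non-algebraic, and produce many realizations via the generic-point lemma for rational $D$-varieties (the paper's Lemma 5.8, which you cite as ``Lemma 5.9''; the statement under review is itself 5.9). The problem is at what you call the main obstacle. You correctly isolate the crucial fact --- that $tp_{L_\partial}(a/k)$ is determined by the $L$-types $tp_L(\nabla^{(m)}(a)/k)$, $m\in\mathbb{N}$ --- but you leave it unproven (``Granting this\dots''), proposing a back-and-forth that would essentially re-run the model-companion construction. The paper closes this step with a one-line citation of Fact 5.7(iii): since $T$ is model complete, it has quantifier elimination in a definitional expansion $L^{*}$ (e.g.\ its Morleyization), so $T_{\partial}$ has quantifier elimination in $L^{*}_{\partial}$; and a quantifier-free $L^{*}_{\partial}$-formula is exactly an $L$-formula evaluated at $({\bar x},\partial({\bar x}),\dots,\partial^{(m)}({\bar x}))$, because $L^{*}_{\partial}$-terms are differential polynomials and $L^{*}$-relations are $L$-definable. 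So the ``obstacle'' is immediate from a fact you had available. In particular it is not an ``almost QE phenomenon'' as you suggest: almost QE is not among the hypotheses of this lemma (it is invoked only afterwards, for the $dcl$ statement, Proposition 5.10), and the type-determination fact needs only model completeness of $T$.

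There is a second, unacknowledged gap: your infinite-order case is too vague to work as stated. Lemma 5.8 applies only to rational $D$-varieties $(V,s)$, i.e.\ it needs a section, and when $a$ has infinite order over $k$ the tower of loci $V_N$ of $\nabla^{(N)}(a)$ carries no given section of $T_\partial(V_N)\to V_N$ --- that is precisely what infinite order means, so ``using the prolongations $T_\partial(V_N)$ and compactness'' does not by itself produce differential points. The repair requires one observation you don't make: by Fact 3.2, infinite order of a single element $a$ forces $a,\partial(a),\partial^{(2)}(a),\dots$ to be algebraically independent over $k$, so each $V_m$ is all of affine $(m+1)$-space. One can then impose an \emph{arbitrary} rational section on $\mathbb{A}^{m+2}$, say $s(x_0,\dots,x_{m+1})=(x_0,\dots,x_{m+1},x_1,\dots,x_{m+1},0)$, and apply Lemma 5.8 to the full-dimensional definable set $\theta(K)\times K$ for each $\theta\in tp_L(\nabla^{(m)}(a)/k)$: any point $b$ there with $\nabla(b)=s(b)$ has the form $\nabla^{(m+1)}(b_0)$ with $\theta(\nabla^{(m)}(b_0))$ holding. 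Combined with the type-determination fact above and the product/avoidance trick you already describe for the finite-order case, this yields infinitely many realizations in both cases; with these two repairs your argument matches the paper's.
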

\begin{proof} It is enough to prove the following: Suppose $(K,\partial)$ is model of $T_{\partial}$, $k$ is a differential subfield of $K$, $K$ is $|k|^{+}$-saturated and $a\in K$ is not in the field-theoretic algebraic closure of $K$. Then $tp(a/k)$ in $(K,\partial)$ has infinitely many realizations in $K$.  This is a routine consequence of the previous lemma and the fact that  $tp_{L_{\partial}}(a/k)$ is determined by $tp_{L}(a,\partial(a), \partial^{(2)}(a),...)/k)$ (by 5.7 (iii)).
\\


\end{proof}

Now we bring in the almost QE hypothesis.
\begin{Proposition}  Assume that $T$ is a theory of large geometric fields which has almost QE.
Let $(K,\partial)\models T_{\partial}$ and let $k$ be a differential subfield of $K$. Then $dcl_{L_{\partial}}(k)$
is the same as $dcl_{L}(k)$. 
\end{Proposition}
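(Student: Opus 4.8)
The inclusion $dcl_{L}(k)\subseteq dcl_{L_{\partial}}(k)$ is immediate, since every $L$-formula is an $L_{\partial}$-formula; the content is the reverse inclusion, which I would reduce to a conjugacy statement inside the relative algebraic closure, where uniqueness of extensions of derivations does the work. Throughout I would work in a saturated $(\mathcal{U},\partial)\models T_{\partial}$ containing $k$, and write $\tilde{k}$ for the relative field-theoretic algebraic closure of $k$ in $\mathcal{U}$. By Lemma~\ref{acl}, $acl_{L_{\partial}}(k)=\tilde{k}$, and since $T$ is a geometric field, model-theoretic algebraic closure in the $L$-reduct agrees with relative field-theoretic algebraic closure, so $acl_{L}(k)=\tilde{k}$ as well. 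In particular any $a\in dcl_{L_{\partial}}(k)\subseteq acl_{L_{\partial}}(k)=\tilde{k}$ is algebraic over $k$ in the field sense.

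The heart of the argument is the claim that, for $a,b\in\tilde{k}$, being $L$-conjugate over $k$ forces them to be $L_{\partial}$-conjugate over $k$. To see this, suppose $a\notin dcl_{L}(k)$; then by saturation there is $b\neq a$ in $\mathcal{U}$ with $tp_{L}(b/k)=tp_{L}(a/k)$, and $b\in acl_{L}(k)=\tilde{k}$. Since $a,b$ realize the same $L$-type over $k$ and both lie in $acl_{L}(k)$, the map $a\mapsto b$ extends to a field automorphism $\sigma$ of $\tilde{k}=acl_{L}(k)$ fixing $k$ pointwise. Now comes the key point: because $k$ is a differential field and $\tilde{k}/k$ is algebraic in characteristic $0$, the ambient derivation $\partial$ restricted to $\tilde{k}$ is the \emph{unique} derivation extending $\partial|_{k}$. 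As $\sigma$ fixes $k$, the map $\sigma\circ\partial\circ\sigma^{-1}$ is again a derivation of $\tilde{k}$ extending $\partial|_{k}$, so by uniqueness it equals $\partial$; that is, $\sigma$ commutes with $\partial$ and is therefore a \emph{differential} automorphism of $(\tilde{k},\partial)$ over $(k,\partial)$.

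Finally I would invoke almost QE. By Fact~2.8(iv) (i.e.\ almost QE of $T_{\partial}$), and since $(\tilde{k},\partial)$ is a relatively algebraically closed differential subfield of $\mathcal{U}$, the quantifier-free $L_{\partial}$-type of an enumeration of $\tilde{k}$ determines its complete $L_{\partial}$-type (Definition~5.6(ii)). Hence the differential automorphism $\sigma$, which preserves quantifier-free $L_{\partial}$-types, is $L_{\partial}$-elementary. As $\sigma$ fixes $k$ and sends $a$ to $b\neq a$, this yields $tp_{L_{\partial}}(b/k)=tp_{L_{\partial}}(a/k)$ with $b\neq a$, contradicting $a\in dcl_{L_{\partial}}(k)$. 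Therefore $a\in dcl_{L}(k)$, which gives the reverse inclusion and the equality $dcl_{L_{\partial}}(k)=dcl_{L}(k)$.

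I expect the main obstacle to be precisely the clean identification that an $L$-automorphism of $\tilde{k}$ over $k$ is automatically a differential automorphism, which rests on the uniqueness of the extension of a derivation to an algebraic extension, together with the correct use of almost QE to upgrade a quantifier-free $L_{\partial}$-isomorphism of the (not necessarily definably closed) algebraic closure to a full elementary map. One should also check the routine points that $\tilde{k}$ is invariant under $L$-automorphisms over $k$ (it equals $acl_{L}(k)$) and that the partial elementary map $a\mapsto b$ extends to an automorphism of $acl_{L}(k)$; both are standard.
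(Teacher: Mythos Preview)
Your proof is correct and follows essentially the same route as the paper: reduce to the field-theoretic algebraic closure $\tilde{k}$ via Lemma~\ref{acl}, take an $L$-conjugate $b$ of $a$ over $k$, extend to an automorphism $\sigma$ of $\tilde{k}$ over $k$, observe that uniqueness of the extension of $\partial$ from $k$ to $\tilde{k}$ forces $\sigma$ to commute with $\partial$, and then invoke almost QE of $T_{\partial}$ (Fact~5.7(iv) in the paper's numbering, not 2.8) to conclude $tp_{L_{\partial}}(a/k)=tp_{L_{\partial}}(b/k)$. The only point to tighten is that you should take $\sigma$ to be an $L$-automorphism of $\tilde{k}=acl_{L}(k)$ (extending the partial $L$-elementary map $a\mapsto b$), not merely a field automorphism, so that it preserves any additional constant symbols in $L$ and hence quantifier-free $L_{\partial}$-types---you acknowledge this in your closing remarks, and it is indeed routine.
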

\begin{proof} Let $a\in dcl_{L_{\partial}}(k)$. By the previous lemma we know that $a$ is in the field-theoretic algebraic closure of $k$.  Hence it suffices to prove:
\newline
{\em Claim.} Suppose $a$ is in the field-theoretic algebraic closure of $k$, then $tp_{L}(a/k)$ implies $tp_{L_{\partial}}(a/k)$. 
\newline
{\em Proof of Claim.}  Suppose $tp_{L}(b/k) = tp_{L}(a/k)$.  So assuming (as we may) that $K$ is $|k|^{+}$-saturated, there is a field automorphism $\sigma$ of $K$ fixing $k$ and taking $a$ to $b$. Let $k'$ be the algebraic closure of $k$ in $K$ (which by Lemma \ref{acl}, equals the relative field-theoretic algebraic closure of $k$ in $K$).  So $\sigma$ restricts to a permutation of $k'$ over $k$ taking $a$ to $b$. Thus $\sigma|k'$ preserves quantifier-free $L$-types over $k$. But $\partial$ has a unique extension to a derivation of $k'$, whereby $\sigma|k'$ also preserves quantifier-free types over $k$ in $L_{\partial}$. By Fact 5.7(iv), $a$ and $b$ have the same type over $k$ in $(K,\partial)$, proving the Claim and the Proposition. 

\end{proof} 

 Fix a model $(K,\partial)$ of $T_{\partial}$, where $T$ is a theory of large geometric fields with almost QE.  There is no harm assuming $(K,\partial)$ to be saturated. All of Section 2 goes through for $T$ in place of $RCF$, using 5.7, 5.8, 5.9, and 5.10. In particular, using  5.7(iii), any  $L_{\partial}$-formula $\phi({\bar x})$ is equivalent, modulo $T_{\partial}$ to one of the form $\theta({\bar x}, \partial({\bar x}),.....)$ where $\theta$ is an an $L$-formula.
Finite-dimensional sets definable in $(K,\partial)$ are defined as in Definition 2.2.  The various symbols for dimension, generics, independence, in $T$ and $T_{\partial}$ are as in Section 2.


Proposition 5.10 allows the proofs in Section 4 to go through, yielding:
\begin{Theorem} Let $(K,\partial)$ be a model of $T_{\partial}$ where $T$ is a not necessarily complete
theory of large geometric fields with almost $QE$. 
Then any finite-dimensional group $\Gamma$ definable in $(K,\partial)$ definably embeds in a group definable in the field $K$ (i.e. in the ring language $L$). Moreover Remark 4.8 also holds. 
\end{Theorem}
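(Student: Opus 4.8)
The plan is to reduce Theorem 5.11 directly to the already-established machinery, observing that nothing in the $RCF_\partial$-specific Sections 3 and 4 genuinely required the ordered field structure beyond three ingredients: the description of $a^{(n+1)}$ in terms of lower derivatives (Fact 3.2, which is purely field-theoretic and already stated without order), the ``lifting'' lemma producing a realization of a generic $L$-type with $s(a)=\nabla(a)$ (which is now Lemma 5.8, the analogue of Lemma 2.7), and the coincidence $\mathrm{dcl}_{L_\partial}(k)=\mathrm{dcl}_L(k)$ (Proposition 5.10, the analogue of the definable-closure statement in Fact 2.1(ii)). First I would fix a saturated model $(K,\partial)\models T_\partial$ with $T$ a theory of large geometric fields with almost QE, and observe, as stated in the paragraph preceding the theorem, that by Fact 5.7(iii) every $L_\partial$-formula is equivalent modulo $T_\partial$ to one of the shape $\theta(\bar x,\partial(\bar x),\dots,\partial^{(m)}(\bar x))$ with $\theta$ an $L$-formula, and that by Lemma 5.9 model-theoretic algebraic closure coincides with field-theoretic relative algebraic closure of the generated differential field. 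These are exactly the analogues of Fact 2.1(i),(ii) used throughout Section 3.

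The core of the argument is to rerun Proposition 3.5 verbatim. Given a finite-dimensional $\Gamma$ definable over a (small, relatively algebraically closed) differential subfield $k$, I would invoke Fact 3.2 and finite-dimensionality to choose $N\geq m$ with $\partial^{(N)}(\bar a)$ algebraic over $k(\bar a,\dots,\partial^{(N-1)}(\bar a))$ for all $\bar a\in\Gamma$, so that $\partial^{(N+1)}(\bar a)$ is rational over $\nabla^{(N)}(\bar a)$. Replacing $\Gamma$ by $\nabla^{(N)}(\Gamma)$ (a definable isomorphism, hence harmless), each point $\bar b$ becomes a generic point of an irreducible affine $k$-variety $V_{\bar b}$ carrying a rational $D$-variety structure $s_{\bar b}$ with $\nabla(\bar b)=s_{\bar b}(\bar b)$; compactness produces finitely many $(V_i,s_i,U_i)$ covering $\Gamma$ inside the $L_k$-definable set $Y=\{\theta\}$. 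The group operation and inversion descend to partial $L_k$-definable $*,\,inv$ on $Y$ exactly as in Proposition 3.5(II), using Fact 5.7(iii) in place of Fact 2.1(iii). Lemma 3.4 then transfers because $k\langle g\rangle=k(g)$ for $g\in\Gamma$ (genericity, independence, and dimension all collapse to their $L$-counterparts), and the crucial existence statement in Lemma 3.4(iv) is supplied by Lemma 5.8 instead of Lemma 2.7. This yields the full analogue of Proposition 3.5 and hence of Proposition 3.5's consequence, the semialgebraic pre-group $(Y,*,inv)$, now over $K$.

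With the pre-group in hand, I would feed it into Theorem 5.4: since $K$ is a geometric field, hence a geometric structure, the data $(Y,*,inv)$ is a generically defined group in the sense of Definition 5.2, so Theorem 5.4 produces a group $G$ definable in the $L$-structure $K$ together with a partial definable $h\colon Y\to G$ inducing a bijection on generics. This realizes Sections 4's Lemmas 4.2--4.5 in the general geometric setting and is precisely why Theorem 5.4 was isolated. Finally, the embedding Lemma 4.7 goes through word for word: for $\gamma\in\Gamma$ one writes $\gamma=ab$ with $a$ $\partial$-generic over $\gamma$, sets $h(\gamma)$ to be the germ of $F_a\circ F_b$, and checks well-definedness, $\partial$-definability, homomorphism, and injectivity. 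The only place the ordered field structure was previously used is in choosing the chart parameters $t_i$ inside $\mathrm{dcl}$ of $k$ (the passage from interpretable to definable), and here Proposition 5.10, giving $\mathrm{dcl}_{L_\partial}(k)=\mathrm{dcl}_L(k)$, does exactly what the elementary-substructure remark before Lemma 4.5 did for $RCF$. The main obstacle, and the reason the intermediate lemmas of Section 5 are needed, is precisely this definable-closure coincidence: without almost QE one cannot guarantee that the $L$-definable group $G$ and the $L_\partial$-definable embedding are defined over parameters living in the reduct, and this is why the hypothesis ``almost QE'' (not merely model completeness) appears in the statement. Remark 4.8 transfers identically, its content having already been recast in Theorem 5.4.
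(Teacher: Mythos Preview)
Your proposal is correct and follows essentially the same route as the paper: rerun Proposition 3.3, Lemma 3.4, and Proposition 3.5 using Lemmas 5.8 and 5.9 in place of Lemma 2.7 and Fact 2.1(ii), then feed the resulting generically defined group into Theorem 5.4 and copy Lemma 4.7 for the embedding.

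One small misattribution is worth correcting. You say that the group operation and inversion descend to partial $L_k$-definable $*$ and $inv$ on $Y$ ``using Fact 5.7(iii) in place of Fact 2.1(iii)'', and that Proposition 5.10 is only needed later, for choosing the chart parameters $t_i$. In fact it is the other way around. Fact 2.1(iii) is not a consequence of quantifier elimination alone: it requires the definable-closure statement in Fact 2.1(ii), and the paper explicitly flags that Proposition 5.10 is what makes part (II) of Proposition 3.3 go through in the general setting (the output of multiplication lies in $dcl_{L_\partial}(k(\text{inputs}))$, and one needs $dcl_{L_\partial}=dcl_L$ on that field to get an $L$-definable function). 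For the chart parameters $t_i$, on the other hand, the paper does not invoke Proposition 5.10 at all; it simply assumes at the outset that $k$ is an elementary substructure of $(K,\partial)$, so the $t_i$ can be found in $k$ directly (this is also how Theorem 5.4 handles the ``Moreover'' clause). Your identification of almost QE as the hypothesis driving the $dcl$ coincidence is right; just move its point of application back to Proposition 3.3(II).
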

\begin{proof} We will again give just a brief guide to the proof, as the proof of Theorem 4.1 just goes through, given the comments in the previous two paragraphs.    Let $\Gamma$ be a finite-dimensional group definable in $(K,\partial)$ over the small differential subfield $k$, which may be assumed to be an elementary substructure. 


First Proposition 3.3 goes through for $\Gamma$: after replacing $\Gamma$ by $\nabla^{(N)}(\Gamma)$ for suitable $N$, we obtain a set $Y$ $L$-definable over $k$, satisfying (I), and (II) there.  For (II) Proposition 5.10 is needed. 
Then Lemma 3.4 goes through with the same proof, using Lemma 5.8 to see that any $L$-generic type of $Y$ can be realized by a realization of some $\partial$-generic type of $\Gamma$.  Proposition 3.5 follows as before. 
Now repeat Section 4 to get an $L$-definable over $k$ group $G$ into which $\Gamma$-definably embeds. Alternatively, $(Y,*,inv)$ is a generically defined group over $k$ in the $L$-structure $K$ in the sense of Definition 5.2, so Theorem 5.4 applies to produce the $L_{k}$-definable  group $G$.  The proof of 4.7 gives a $L_{\partial}$-definable over $k$ embedding of $\Gamma$ in $G$.

Remark 4.8 goes through too by the same proof as at the end of Section 4. 

\end{proof}


Some examples of theories satisfying the assumptions of the last proposition are (the theories of) bounded pseudoalgebraically closed fields of characteristic $0$ (including pseudofinite fields of characteristic $0$) \cite{Hrushovski-PAC}, and (the theory of) ${\mathbb Q}_{p}$. 

\section{Other variants}

We consider two other general contexts which go outside that of Section 5.2.

\subsection{The Fornasiero-Kaplan setting: $o$-minimal structures with a generic derivation}
Here we discuss $o$-minimal expansions of real closed fields with a generic derivation, as appearing in 
\cite{F-K}.  

Formally $T$ is taken to be a model-complete $o$-minimal expansion of $RCF$. $T\cup\{\partial$ is a ``compatible" derivation\} is shown to have a model companion $T_{\partial}$. Compatibility will be defined below, but when the language is just that of rings, any derivation is compatible. 
So for  $T = RCF$, we obtain $RCF_{\partial}$.
We take now $L$ to be the language of $T$ (which is in general an expansion of the language of rings) and $L_{\partial}$ to be $L\cup\{\partial\}$. 

We will again prove that finite-dimensional groups definable in models $(K,\partial)$ of $T_{\partial}$ are definably embeddable in groups definable in the language $L$. The proof will be like the earlier ones except for the first steps where cell-decomposition in models of $T$ replaces algebraic varieties and their ``prolongations". More accurately the only algebraic varieties that need to be considered are the affine spaces $K^{n}$.  These $o$-minimal methods could also have been used in the $RCF_{\partial}$ case but we chose instead to use techniques which could be generalized as we did in Section 5.2. 

We will give a brief account of the theory from \cite{F-K}, but with  notation consistent with the  current paper. Then we sketch again the proof of the desired result. 

For concrete applications $T$ can be an arbitrary model-complete (and complete) $o$-minimal theory in a language $L$ expanding $RCF$. We now assume that a symbol for the ordering $<$ is in the language $L$. For the kind of general results we have in mind there is no harm in assuming that $T$ has quantifier elimination (and is universally axiomatized). 
For models $M$ of $T$ we will freely use basic notions from $o$-minimality. ``$L$-algebraic" now means in the sense of the $o$-minimal structure $M$. 

The paper \cite{F-K} defines a derivation $\partial$ of the underlying field $M$ to be {\em compatible} with the $o$-minimal structure on $M$ if for every $n$, open definable subset $U$ of $M^{n}$, and $C^{1}$-function $f:U\to M$ which is $\emptyset$-definable in $M$, we have that for all ${\bar a}\in U$, 
$\partial (f({\bar a})) = \sum_{i=1,..,n}(\partial f/\partial x_{i})({\bar a})\partial(a_{i})$.
We assume now that $\partial$ is compatible with $M$ (so with any model of $T$). 

Note that compatibility  follows from the definition of a derivation in the case that $f$ is a polynomial over the constants of $M$.  We will discuss later the case when $f$ is a  $C^{1}$- definable function with parameters. 

 From Section 4  of \cite{F-K} we have:
\begin{Fact}
(1) $T\cup\{\partial$ is a compatible derivation\} has a model companion $T_{\partial}$ which is complete, and moreover has quantifier elimination in $L_{\partial}$ assuming $T$ does.
\newline
(2) $T_{\partial}$ can be axiomatized (modulo $T\cup\{\partial$ is a compatible derivation\}) by the following:
Suppose $X\subseteq M^{2n}$ is $L$-definable (with parameters) and its projection to $M^{n}$ has $dim = n$ (i.e. contains an open set) then there is ${\bar a}\in M^{n}$ such that $({\bar a}, \partial({\bar a})) \in X$. 
\newline
(3) For any $L_{\partial}$-formula $\phi({\bar x})$ there is an $L$-formula $\theta({\bar x}_{0},...,{\bar x}_{N})$ 
(with $length({\bar x}) = length({\bar x}_{0}))$ such that for any $(M,\partial)\models T_{\partial}$, and ${\bar a}$ 
from $M$, we have $(M,\partial)\models \phi({\bar a}) \leftrightarrow  \theta({\bar a},...,\partial^{(N)}({\bar a}))$. 

\end{Fact}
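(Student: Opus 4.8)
The plan is to take $T_\partial$ to be $T\cup\{\partial$ is a compatible derivation$\}$ together with the geometric axiom scheme in (2), and to prove it is the model companion by the usual recipe: first, every model of $T\cup\{\partial$ is a compatible derivation$\}$ embeds in a model of $T_\partial$; second, $T_\partial$ is model complete. Model completeness I would obtain from quantifier elimination in $L_\partial$ via a back-and-forth, and then completeness (when $T$ is complete) and the prolongation statement (3) also fall out of QE, so the substantive work is the embedding step, the geometric axioms, and the back-and-forth. Observe first that ``the projection of $X$ to $M^n$ has dimension $n$'' is an $L$-definable condition on the parameters defining $X$, by o-minimal definability of dimension, so the scheme in (2) is a genuine first-order $\forall\exists$ scheme.

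For the embedding step I would fix a model $(M,\partial)$ of $T\cup\{\partial$ compatible$\}$ and an instance $X\subseteq M^{2n}$ whose projection $U$ to the first $n$ coordinates is $n$-dimensional, hence contains an open box. Passing to an elementary $L$-extension $M'\succeq M$, choose $\bar a\in U(M')$ generic over $M$, so $dim_L(\bar a/M)=n$, and pick any $\bar b$ in the nonempty fibre $X_{\bar a}$. The crux is a derivation-extension lemma: because $\bar a$ is a full-dimensional generic its coordinates are $L$-independent over $M$, so one may prescribe $\partial(a_i)=b_i$ and extend $\partial$ to a compatible derivation, first of $dcl_L(M\cup\{\bar a\})$ and then of $M'$, by the chain rule; well-definedness is precisely the content of compatibility together with o-minimal differential calculus (two $\emptyset$-definable $C^1$ representations of one element have equal formal derivatives). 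Then $(\bar a,\partial\bar a)\in X$ in $(M',\partial)$, and iterating over all instances along a chain and taking unions yields an extension satisfying the geometric axioms.

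For quantifier elimination I would run a back-and-forth between two saturated models $(M_1,\partial)$, $(M_2,\partial)$ of $T_\partial$ sharing a common $L_\partial$-substructure $k$, extending a partial $L_\partial$-isomorphism one element $b\in M_1$ at a time. Everything reduces to the $L$-type of the prolongation $\nabla^{(j)}(b)=(b,\partial b,\dots,\partial^{(j)}(b))$, and the argument splits on whether $b$ is $\partial$-transcendental or $\partial$-algebraic over $k$. In the transcendental case the $\nabla^{(j)}(b)$ are $L$-generic of strictly increasing dimension, and the geometric axioms in $M_2$, applied to the appropriate full-dimensional $X$, produce a matching image; in the $\partial$-algebraic case some $\partial^{(n)}(b)$ already lies in $dcl_L$ of the lower prolongation through a definable $C^1$ function, and compatibility then pins down all higher derivatives, so one need only realize a finite-order $L$-type on the $M_2$ side, again via the geometric axioms. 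I expect the $\partial$-algebraic case to be the main obstacle: one must verify that compatibility really forces the value of $\partial^{(n+1)}(b)$ from the $C^1$ relation (the o-minimal analogue of Fact 3.2), and that the induced $L$-type remains consistent with full-dimensionality so that the geometric axiom is applicable, all while keeping the extended operator a compatible derivation and not merely a derivation. Granting QE, $T_\partial$ is model complete, hence the model companion; completeness over the common substructure follows when $T$ is complete; and (3) is exactly the translation, guaranteed by QE in $L_\partial$, of an arbitrary $L_\partial$-formula into an $L$-formula on the finite prolongation $\nabla^{(N)}(\bar x)$.
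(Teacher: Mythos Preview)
The paper does not prove this statement at all: it is labelled a \emph{Fact} and is simply quoted from Section~4 of Fornasiero--Kaplan \cite{F-K}, with no argument supplied. So there is nothing in the paper's own treatment to compare your proposal against beyond the bare citation.

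That said, your sketch is a faithful outline of how the proof in \cite{F-K} actually goes: define $T_\partial$ by the geometric axiom scheme, extend derivations along $L$-generic elements to get existential closedness, and run a back-and-forth on prolongations to obtain QE (splitting on whether the new element is $\partial$-algebraic or $\partial$-transcendental over the base). Your identification of the delicate point---that compatibility forces the higher derivatives once one derivative lands in $dcl_L$ of the lower jet, and that the extended operator stays compatible rather than merely a derivation---is exactly right, and is where the work in \cite{F-K} is concentrated. For the purposes of the present paper, however, you need only cite the reference; the authors do not reprove any of this.
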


Given $(M,\partial)\models T_{\partial}$, we let $dcl_{L_{\partial}}$ denote definable closure in the  sense of the structure ($M,\partial)$ and $dcl_{L}$ for definable closure in the sense of the $o$-minimal structure $M\models T$.

From Corollary 5.14 of \cite{F-K} we have:
\begin{Fact} Let $A\subset M$ where $(M,\partial)\models T_{\partial}$. Then $dcl_{L_{\partial}}(A) = 
dcl_{L}(\{\partial^{(n)}(a): a\in A: n=0,1,2, ...\})$. In other words $dcl_{L_{\partial}}(A)$ is the definable closure in the $L$-structure $M$ of the differential field generated by $A$.
\end{Fact}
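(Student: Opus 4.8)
The plan is to mirror the route taken for the geometric-field setting in Lemmas 5.8 and 5.9 and Proposition 5.10, exploiting the two structural inputs special to the $o$-minimal case: the geometric axiom of Fact 6.1(2), and the fact that the $L$-reduct $M\models T$ is $o$-minimal, so that $acl_L = dcl_L$ on every parameter set (a classical consequence of $o$-minimality). Write $B = \{\partial^{(n)}(a): a\in A,\ n\geq 0\}$, the underlying set of the differential field generated by $A$. One inclusion is purely formal: each element of $B$ is $\partial^{(n)}(a)$ for some $a\in A$, hence lies in $dcl_{L_\partial}(A)$; since $L\subseteq L_\partial$ gives $dcl_L\subseteq dcl_{L_\partial}$, and $dcl_{L_\partial}$ is idempotent, we get $dcl_L(B)\subseteq dcl_{L_\partial}(A)$.

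For the reverse inclusion I would first reduce to an algebraic-closure statement. Because $M$ is $o$-minimal, $acl_L(B) = dcl_L(B)$, so it suffices to prove $acl_{L_\partial}(A)\subseteq acl_L(B)$: this yields $dcl_{L_\partial}(A)\subseteq acl_{L_\partial}(A)\subseteq acl_L(B) = dcl_L(B)$, closing the loop against the easy inclusion. To prove the $acl$-inclusion I argue contrapositively, mirroring Lemma 5.9: assuming $(M,\partial)$ saturated, I show that if $b\notin acl_L(B)$, i.e. $\dim_L(b/B)\geq 1$, then $tp_{L_\partial}(b/A)$ has infinitely many realizations, so $b\notin acl_{L_\partial}(A)$. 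By Fact 6.1(3) the type $tp_{L_\partial}(b/A)$ is determined by the $L$-type of the jet $\nabla^{(\infty)}(b) = (b,\partial b,\partial^2 b,\ldots)$ over $B$ (the $A$-parameters are absorbed into $B$ via $\nabla^{(N)}$), so it is enough to produce infinitely many $b^{\ast}$ with $\nabla^{(\infty)}(b^{\ast})\equiv_L \nabla^{(\infty)}(b)$ over $B$.

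These $b^{\ast}$ would be produced exactly as in Lemma 5.8, with the Fornasiero--Kaplan geometric axiom Fact 6.1(2) playing the role of the $RCF_\partial$ geometric axioms. Concretely, I would pass to a saturated $L$-elementary extension, realize the $L$-type of a suitable finite jet $\nabla^{(N)}(b)$ generically (using $\dim_L(b/B)\geq 1$ to obtain realizations with first coordinate $\neq b$), and then extend $\partial$ to a derivation of the extension that is compatible with the $o$-minimal structure and sends each jet coordinate to the next. The resulting model of $T\cup\{\partial$ is a compatible derivation$\}$ embeds $L_\partial$-elementarily back over $(M,\partial)$ by model-completeness, and the image of the first coordinate is a new realization of $tp_{L_\partial}(b/A)$; varying the generic choice gives unboundedly many. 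Compatibility is the feature that makes this coherent: it forces the extended derivation on the $L$-algebraic part of the jet, just as the uniqueness of the derivation on $k'$ was used in Proposition 5.10. (In the same vein, compatibility forces $dcl_L(B)$ itself to be closed under $\partial$: if $b = f(\bar c)$ with $\bar c\in B$ and $f$ an $\emptyset$-definable $C^1$ function, then $\partial b = \sum_i (\partial f/\partial x_i)(\bar c)\,\partial c_i\in dcl_L(B)$, since each $\partial c_i\in B$.)

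I expect the main obstacle to be precisely this last construction and the accompanying dimension bookkeeping. Unlike the field case of Lemma 5.8, where any extension of the derivation is admissible, here the extended derivation must remain compatible with the $o$-minimal structure, and Fact 6.1(2) applies only when the relevant projection is full-dimensional. The genuine work is therefore to stabilize the non-decreasing sequence $\dim_L(\nabla^{(n)}(b)/B)$ — an $o$-minimal analogue of Fact 3.2 — so as to present the jet condition at some finite order $N$ as a prolongation equipped with a definable section whose projection is full-dimensional, to which Fact 6.1(2) can legitimately be applied to yield a $\partial$-compatible point with first coordinate distinct from $b$.
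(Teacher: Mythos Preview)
The paper does not prove this statement; it is quoted as Corollary~5.14 of \cite{F-K}, so there is no paper proof to compare against. Your reduction --- easy inclusion, then $acl_L=dcl_L$ by $o$-minimality, then the contrapositive on $acl$ --- is sound. The unresolved step is the one you flag yourself: extending $\partial$ to a \emph{compatible} derivation of an $L$-elementary extension with prescribed values on the jet. Unlike the field case of Lemmas~2.7 and~5.8, where any derivation extends freely, compatibility is a genuine constraint, and the relevant extension lemma is itself one of the substantive ingredients of \cite{F-K}; invoking it here is close to assuming what you set out to prove.

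The fix is to stay inside $(M,\partial)$ and use Fact~6.1(2) directly. For a single element $b$ with $dim_L(b/B)=1$, the sequence $dim_L(\nabla^{(n)}(b)/B)$ starts at $1$ and rises by at most one each step. Either it equals $n+1$ throughout, so every finite jet is already generic in $M^{n+1}$ and Fact~6.1(2) applies to each formula of the type; or it stabilises at some $n_0$, in which case $\nabla^{(n_0-1)}(b)$ is still generic in $M^{n_0}$ --- this is where $b$ being a singleton matters --- and $\partial^{(n_0)}(b)=f(\nabla^{(n_0-1)}(b))$ for some $L_B$-definable $f$. Now the argument of Lemma~6.7 (which depends only on Fact~6.1(2)) with section $s(\bar x)=(x_1,\dots,x_{n_0-1},f(\bar x))$ yields realizations $\bar c$ of $tp_L(\nabla^{(n_0-1)}(b)/B)$ with $\nabla^{(n_0-1)}(c_0)=\bar c$ and $\partial^{(n_0)}(c_0)=f(\bar c)$; Remark~6.5 and Corollary~6.6 (neither of which uses Fact~6.2) then force all higher derivatives of $c_0$ to be the same $L_B$-definable functions of $\bar c$ as for $b$, whence $tp_{L_\partial}(c_0/A)=tp_{L_\partial}(b/A)$. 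Your last paragraph points toward stabilisation but misses the decisive observation that the stabilised jet is itself full-dimensional, which is exactly what makes Fact~6.1(2) apply without any compatible-extension machinery.
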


We now work in a saturated model $(M,\partial)$ of $T_{\partial}$. $k$ typically denotes a (small) differential subfield of $(M,\partial)$.  We will use $dim_{L}$,  $L$-generic etc. to mean in the sense of $o$-minimal structure $M$.

As in the earlier sections we will use  the notation $\nabla^{(N)}({\bar a})$ for $({\bar a}, \partial {\bar a}, ...,\partial^{(N)}({\bar a}))$.  Similarly for $X$ a definable set in $(M,\partial)$, $\nabla^{(N)}(X) = \{\nabla^{(N)}({\bar a}): {\bar a}\in X\}$

\begin{Definition} By $dim_{\partial}({\bar a}/k)$ we mean $\infty$ or $max\{dim(\nabla^{(N)}({\bar a})/k): N=0,1,....\}$ if there is a maximum. 
\end{Definition}

Note that  $dim_{\partial}({\bar a}/k)$ is an invariant of $dcl_{L_{\partial}}(k({\bar a}))$ (and $k$). So if ${\bar a}$ and ${\bar b}$ are $L_{\partial}$-interdefinable over $k$ then $dim_{\partial}({\bar a}/k) = dim_{\partial}({\bar b}/k)$. 

As in the earlier sections we will define an $(L_{\partial})_{k}$ definable  set to be {\em finite-dimensional} if there is a finite bound on $dim_{\partial}({\bar a}/k)$ for ${\bar a}\in X$ and will define $dim_{\partial}(X)$ to be this finite bound.

\begin{Definition} For $X$ an $(L_{\partial})_{k}$-definable set of  finite-dimension,  and ${\bar a}\in X$ we say that ${\bar a}$ is $\partial$-generic in $X$ over $k$, or $tp_{L_{\partial}}({\bar a}/k)$ is a $\partial$-generic type of $X$, if $dim_{\partial}({\bar a}/k) = dim_{\partial}(X)$. 
\end{Definition}

We have to discuss  the application of $\partial$ to elements of the form $f({\bar a})$ where $f$ is an $L$- definable {\em with parameters} function from some open subset $U$ of $M^{n}$ to $M$.
First we can write $f({\bar x})$ as $g({\bar x}, {\bar b})$ where $g({\bar x}, {\bar y})$ is some $L$-definable over $\emptyset$ function from an open subset of $M^{n+m}$ to $M$ and ${\bar b} = (b_{1},..,b_{m})$ consists of independent nonalgebraic over $\emptyset$ elements of $M$.   With this notation and using the compatibility of $\partial$, we have:
\begin{Remark}  (With the conventions above.) For ${\bar a}\in U$, 

$$\partial (f({\bar a})) =  \sum_{i=1,..,n}(\partial f/\partial x_{i})({\bar a}){\partial}(a_{i}) + f^{\partial}({\bar a}),$$ where $f^{\partial}({\bar a})$ is  $\sum_{j=1,..,m} (\partial g/\partial y_{j})({\bar a}, {\bar b}){\partial}(b_{j})$. 
\end{Remark}

\begin{Corollary} Suppose $\partial^{(n+1)}({\bar a})\in dcl_{L}(k(\nabla^{(n)}({\bar a})))$. 
Then, for all $m$,  $\partial^{(m)}({\bar a})\in dcl_{L}(k(\nabla^{(n)}({\bar a})))$.
\end{Corollary}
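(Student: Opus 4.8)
The plan is to prove this by induction on $m$, mirroring the algebraic argument of Fact 3.2 but with $dcl_{L}$ replacing field-theoretic algebraic closure and with the derivative identity of the preceding Remark replacing the naive differentiation of a minimal polynomial. Write $\bar a = (a_{1},\dots,a_{r})$ and set $D = dcl_{L}(k(\nabla^{(n)}(\bar a)))$. For $m \le n$ the tuple $\partial^{(m)}(\bar a)$ is literally a subtuple of $\nabla^{(n)}(\bar a)$, so $\partial^{(m)}(\bar a)\in D$ trivially, and for $m = n+1$ the containment $\partial^{(n+1)}(\bar a)\in D$ is exactly the hypothesis. It therefore suffices to show, for each $m \ge n+1$, that $\partial^{(m)}(\bar a)\in D$ implies $\partial^{(m+1)}(\bar a)\in D$.

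For the inductive step, fix a coordinate $a_{i}$ and use $\partial^{(m)}(a_{i})\in D$ to write $\partial^{(m)}(a_{i}) = f_{i}(\nabla^{(n)}(\bar a))$ for some $L$-definable (partial) function $f_{i}$, whose parameters I would take to lie in $k$ (harmless, since $f_{i}$ is $k$-definable). Applying $\partial$ to both sides, the left-hand side becomes $\partial^{(m+1)}(a_{i})$, and the preceding Remark expands the right-hand side as
\begin{equation*}
\partial\bigl(f_{i}(\nabla^{(n)}(\bar a))\bigr) = \sum_{l,j} \frac{\partial f_{i}}{\partial x_{l,j}}(\nabla^{(n)}(\bar a))\,\partial\bigl(\partial^{(l)}(a_{j})\bigr) + f_{i}^{\partial}(\nabla^{(n)}(\bar a)),
\end{equation*}
where $x_{l,j}$ indexes the coordinate of $\nabla^{(n)}(\bar a)$ carrying $\partial^{(l)}(a_{j})$, with $0\le l\le n$. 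Each partial derivative $\tfrac{\partial f_{i}}{\partial x_{l,j}}(\nabla^{(n)}(\bar a))$ is the value at $\nabla^{(n)}(\bar a)$ of an $L$-definable over $k$ function, hence lies in $D$. Each factor $\partial(\partial^{(l)}(a_{j})) = \partial^{(l+1)}(a_{j})$ also lies in $D$: for $l<n$ it is a coordinate of $\nabla^{(n)}(\bar a)$, and for $l=n$ it is $\partial^{(n+1)}(a_{j})$, which is in $D$ by hypothesis. Finally, since the parameters of $f_{i}$ were chosen inside the differential field $k$ and $\partial(k)\subseteq k$, the correction term $f_{i}^{\partial}(\nabla^{(n)}(\bar a))$ is again an $L$-definable over $k$ function of $\nabla^{(n)}(\bar a)$, so lies in $D$. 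Thus $\partial^{(m+1)}(a_{i})\in D$ for every $i$, closing the induction.

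The step I expect to be the main obstacle is justifying the use of the preceding Remark, which is stated only for points in the open set on which the relevant function is $C^{1}$: a priori $\nabla^{(n)}(\bar a)$ need not lie in such an open set, and $f_{i}$ need not be $C^{1}$ there. I would resolve this via $o$-minimal $C^{1}$-cell decomposition, passing to the $L$-locus $V$ of $\nabla^{(n)}(\bar a)$ over $k$ (the smallest $k$-definable set containing the point), in which $\nabla^{(n)}(\bar a)$ is by definition generic. Near this generic point $V$ is a $C^{1}$-submanifold of dimension $d = dim_{L}(\nabla^{(n)}(\bar a)/k)$, and a suitable projection onto $d$ of the ambient coordinates is, near $\nabla^{(n)}(\bar a)$, a local $C^{1}$-diffeomorphism between $V$ and an open subset of $M^{d}$. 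Composing $f_{i}$ with the inverse chart exhibits $\partial^{(m)}(a_{i})$ as a function that is genuinely $C^{1}$ on an open subset of $M^{d}$, to which the Remark applies directly. The chosen $d$ coordinates are themselves among the $\partial^{(l)}(a_{j})$ with $l\le n$, so their images under $\partial$ again lie in $D$ and the chain-rule bookkeeping above goes through verbatim.
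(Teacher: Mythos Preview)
Your proof is correct and is exactly the argument the paper has in mind: the Corollary is stated without proof, as an immediate consequence of Remark 6.5, and you have supplied precisely the intended induction, applying the chain-rule identity of Remark 6.5 to pass from $\partial^{(m)}(\bar a)\in D$ to $\partial^{(m+1)}(\bar a)\in D$. Your final paragraph correctly flags and resolves the one genuine technicality the paper leaves implicit, namely reducing to a $C^{1}$ function on an open set via a $k$-definable $C^{1}$-cell decomposition so that Remark 6.5 applies; this is entirely in the spirit of how the paper later uses cell decomposition in the proof of Theorem 6.8.
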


Finally we need the analogue of Lemma 2.7:
\begin{Lemma} Let $p({\bar x})$ be an $L$-generic type of $M^{n}$ over $k$ (which just means that $p = tp_{L}({\bar a}/k)$ where $dim_{L}({\bar a}/k)  = n$), and let $s({\bar x})$ be a partial function from $K^{n}$ to $K^{n}$ definable in $L$ over $k$ and defined at ${\bar a}$. Then there is a realization ${\bar b}$ of $p$ such that $\partial({\bar b}) = s({\bar b})$.
\end{Lemma}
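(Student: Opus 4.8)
The plan is to follow the overall shape of the proof of Lemma 2.7, but rather than extend the derivation by hand (which in the $o$-minimal setting is delicate, since the extension must remain \emph{compatible} with the structure), I would exploit the geometric axioms of Fact 6.1(2) together with saturation of $(M,\partial)$. Concretely, fix a realization ${\bar a}$ of $p$, so that $dim_{L}({\bar a}/k)=n$ and $s$ is defined at ${\bar a}$. Consider the partial $L_{\partial}$-type over $k$
\[
\Sigma({\bar x}) \;=\; p({\bar x}) \cup \{\partial({\bar x}) = s({\bar x})\}.
\]
It suffices to show that $\Sigma$ is finitely satisfiable in $(M,\partial)$, since then saturation yields a realization ${\bar b}$, which necessarily realizes the complete type $p$ and satisfies $\partial({\bar b}) = s({\bar b})$.

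For finite satisfiability, take finitely many formulas of $p$ and let $\phi({\bar x})\in p$ be their conjunction. Let $\Gamma_{s} = \{({\bar x}, s({\bar x})) : {\bar x}\in \mathrm{dom}(s)\}\subseteq M^{2n}$ be the graph of $s$, an $L$-definable over $k$ set, and put $X = \Gamma_{s} \cap (\phi(M)\times M^{n})$. The projection of $X$ to the first $n$ coordinates is $\{{\bar x}\in \mathrm{dom}(s) : \phi({\bar x})\}$; it contains ${\bar a}$ (as $s$ is defined at ${\bar a}$ and $\phi\in tp_{L}({\bar a}/k)$), so its $o$-minimal dimension is at least $dim_{L}({\bar a}/k)=n$, hence exactly $n$, hence it contains an open set. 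By Fact 6.1(2) there is ${\bar b}\in M^{n}$ with $({\bar b},\partial({\bar b}))\in X$; that is, $\phi({\bar b})$ holds and $\partial({\bar b}) = s({\bar b})$. This realizes the chosen finite fragment of $\Sigma$.

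Invoking saturation of $(M,\partial)$ (we may assume it is $|k|^{+}$-saturated, as $k$ is small) then completes the argument. The only point requiring care, and the reason compactness is genuinely needed, is that $p$ is a \emph{complete} $L$-type while there are in general many $L$-generic types of $M^{n}$ over $k$; thus one cannot isolate $p$ by a single definable set and apply the geometric axiom just once. Routing through the partial type $\Sigma$ and finite satisfiability is exactly what lets the single-instance geometric axiom deliver a realization of all of $p$ simultaneously. This is also the step that replaces the derivation-extension-plus-quantifier-elimination argument of Lemma 2.7, cleanly bypassing the compatibility issue.
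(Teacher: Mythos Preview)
Your proof is correct and follows essentially the same approach as the paper: show that for each $\phi\in p$ the set $X=\{({\bar x},s({\bar x})):\phi({\bar x}),\ {\bar x}\in\mathrm{dom}(s)\}$ has full-dimensional projection (since it contains ${\bar a}$), apply Fact 6.1(2) to get a realization of $\phi$ with $\partial({\bar b})=s({\bar b})$, then use compactness/saturation. The paper's proof is the same argument in two lines, with the minor cosmetic addition of first passing to an open $U$ on which $s$ is defined and $C^{1}$; your commentary on why this route sidesteps the compatibility issue that would arise in a Lemma~2.7-style derivation-extension argument is apt.
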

\begin{proof} Let $U\subseteq K^{n}$ be open and $L$-definable over $k$ such that ${\bar a}\in U$ and $s$ is defined and $C^{1}$ on $U$. By Fact 6.1 (2), for every formula $\phi({\bar x})\in p({\bar x})$ there is a realization ${\bar b}$ of $\phi$ such that $\partial({\bar b}) = s({\bar b})$. Apply compactness (in the saturated model $(M,\partial)$). 
\end{proof}

We also have of course $o$-minimal independence in $M$ and  corresponding notion of $\partial$-independence in 
$(M,{\partial})$ where  ${\bar a}$ and ${\bar b}$ are $\partial$-independent over $A$ if the $L_{\partial}$-definable closures of ${\bar a}$ and ${\bar b}$ are $L$-independent over the $L_{\partial}$-definable closure of $A$. We will use freely the obvious properties of $\partial$-independence, as in the $RCF_{\partial}$ case. 

We can now give the proof of the following, where items (ii) and (iii) are useful for further work. 
\begin{Theorem} (Assume $T$ is a model-complete, complete, $o$-minimal expansion of $RCF$ in language $L$, and let $(M,\partial)$ be a saturated model of $T_{\partial}$.
Suppose that $\Gamma$ is a finite-dimensional group definable over $k$ in $(M,\partial)$. Assume $k$ is a (small) elementary substructure of $(M,\partial)$. Then 
there is a group $G$, $L$-definable over $k$ in $M$ such that
\newline
(i) there is an $L_{\partial}$-definable (over $k$) embedding $h$ of $\Gamma$ into $G$.
\newline
(ii) Every $L$-generic over $k$ type of $G$ is realized by an element of $h(\Gamma)$. 
\newline
(iii) There are $L$-definable over $k$ sets $X_{1},..,X_{r}\subseteq G$  $L$-definable functions $s_{i}: X_{i}\to M^{n}$ for suitable $n$, such that  $\cup_{i}X_{i} = G$, and for each $L$-generic over $k$ element $a\in G$, if $a\in X_{i}$ then $a\in \Gamma$ iff $s_{i}(a) = \partial(a)$. 
\end{Theorem}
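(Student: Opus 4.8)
The plan is to transfer the argument of Sections 3 and 4 essentially verbatim. Since an $o$-minimal structure is a geometric structure, once a generically defined group is produced on an $L$-definable set, Theorem 5.4 supplies the $L$-definable group $G$ directly. The only genuine point of departure from the $RCF_\partial$ proof is the very first step: where Section 3 used an affine variety $V_{\bar a}$ together with a rational section $s_{\bar a}\colon V_{\bar a}\to T_\partial(V_{\bar a})$, here the relevant variety is simply an affine space and the section is replaced by an honest $L$-definable function expressing $\partial$, as already licensed by Corollary 6.6 and Lemma 6.7.

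First I would establish the analogue of Proposition 3.3 for $\Gamma$. By Fact 6.1(3), $\Gamma$ is defined by $\theta(\bar x,\partial\bar x,\dots,\partial^{(m)}\bar x)$ for an $L$-formula $\theta$, and by finite-dimensionality together with Corollary 6.6 there is $N\geq m$ with $\partial^{(N+1)}(\bar a)\in dcl_L(k(\nabla^{(N)}(\bar a)))$ for every $\bar a\in\Gamma$; write $\partial^{(N+1)}(\bar a)=f(\nabla^{(N)}(\bar a))$, an $L_k$-definable function depending only on $tp_{L_\partial}(\bar a/k)$. Replacing $\Gamma$ by the $L_\partial$-isomorphic group $\nabla^{(N)}(\Gamma)$, a point is $\bar b=(\bar b_0,\dots,\bar b_N)$ and one has $\partial(\bar b)=(\bar b_1,\dots,\bar b_N,f(\bar b))=:s(\bar b)$, an $L_k$-definable function on an open set. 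Letting $Y$ be the $L_k$-definable set cut out by $\theta$ and running the compactness argument of Proposition 3.3, I would obtain finitely many open $L_k$-definable $U_1,\dots,U_r\subseteq Y$ and $L_k$-definable functions $s_1,\dots,s_r$ with $\Gamma=\bigcup_i\{\bar b\in Y\cap U_i:\partial(\bar b)=s_i(\bar b)\}$.

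Next I would record the $o$-minimal versions of Lemma 3.4 and Proposition 3.5. Since $k\langle g\rangle=dcl_L(k(g))$ for $g\in\Gamma$ (by Fact 6.2), $dim_\partial$, $\partial$-genericity and $\partial$-independence over $k$ coincide with $dim_L$, $L$-genericity and $L$-independence; the nontrivial direction, that each $L$-generic type of $Y$ is the $L$-reduct of a $\partial$-generic type of $\Gamma$, is exactly Lemma 6.7 applied to the pairs $(U_i,s_i)$. Reducing the group operation of $\Gamma$ via Fact 6.1(3) and cell decomposition (the $o$-minimal analogue of Fact 2.1(iii), with Fact 6.2 keeping generic values in the right definable closure) yields partial $L_k$-definable operations $*$ and $inv$ on $Y$ whose restrictions to $\Gamma$ are the group operation and inversion. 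Thus $(Y,*,inv)$ is a generically defined group in the geometric structure $M$ in the sense of Definition 5.2. Applying Theorem 5.4 with $A=k$ (legitimate since $k\prec(M,\partial)$ forces $k\prec M$ as $L$-structures) produces the $L_k$-definable group $G$ with $dim(G)=dim(Y)$ and the partial map $a\mapsto$ germ of $F_a$; the proof of Lemma 4.7 then transfers unchanged to give the $L_\partial$-definable embedding $h$ of $\Gamma$ into $G$, proving (i), while the argument of Remark 4.8 gives (ii) and (iii), with the $X_i$ and $s_i$ arising as in Remark 4.8 from the covering $\{U_i\}$ and the sections constructed above.

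The main obstacle is the one place where the $o$-minimal argument genuinely differs from the algebraic one, namely controlling $\partial$ on terms $f(\bar a)$ with $f$ an $L$-definable function carrying parameters. In $RCF_\partial$ the prolongation $T_\partial(V)$ silently records the action of $\partial$ on the coefficients of the defining polynomials through the twisted terms $P^\partial$; here the corresponding bookkeeping is Remark 6.5, whose correction term $f^\partial(\bar a)=\sum_j(\partial g/\partial y_j)(\bar a,\bar b)\,\partial(b_j)$ must be folded into the $L_k$-definable sections $s_i$. The delicate verification is that, after writing each function over $k$ with its parameters in the differential field $k$ (so that each $\partial(b_j)$ again lies in $k$ and is a fixed datum from the $L$-point of view), the resulting $s_i$ remains $L_k$-definable and $C^1$ on an open set meeting the relevant $L$-generic type; compatibility of $\partial$ with the $o$-minimal structure is precisely what guarantees these functions exist and behave well enough for Lemma 6.7 to apply. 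Once this is in place, the remainder is a routine transfer of the generic-germ construction of Sections 4 and 5.1.
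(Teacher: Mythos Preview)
Your proposal follows essentially the same route as the paper's proof. The one place where the paper is more explicit is in the application of Lemma 6.7: since that lemma is stated only for $L$-generic types of the full affine space $M^{n}$ (i.e.\ tuples whose $L$-dimension equals their length), the paper takes a $C^{1}$-cell decomposition of $Y$ compatible with the formulas $\phi_{p_{i}}$ and functions $s_{p_{i}}$, selects the cells $C_{j}$ of maximal dimension $n=\dim_{L}(Y)$, and projects each $C_{j}$ bijectively (via some coordinate projection $\pi^{j}$) onto an open $U_{j}\subseteq M^{n}$. Remark 6.5 is then used to verify that the constraint $\partial(\bar a)=s_{j}(\bar a)$ on $C_{j}$ is equivalent, under this identification, to a constraint $\partial(a_{1},\dots,a_{n})=\pi^{j}(s_{j})(a_{1},\dots,a_{n})$ on $U_{j}$, and only now does Lemma 6.7 apply. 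You invoke Lemma 6.7 directly on your pairs $(U_{i},s_{i})\subseteq Y$, which is not quite licensed because a generic point of $Y$ is not a generic point of the ambient affine space; but the cell-decomposition-and-projection step fills this gap immediately, and everything else in your outline matches the paper.
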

\begin{proof} As remarked above, the only change from the proof of Theorem 4.1 concerns some of the preparatory material in Section 3. This approach will also work for $RCF_{\partial}$ which will be a useful observation for later work. 

Fix the group $\Gamma$, which by Fact 6.1(3) is defined by a $(L_{\partial})_{k}$-formula of the form $\theta({\bar x}, \partial({\bar x}),...,\partial^{(N)}({\bar x}))$ for some $L$-formula $\theta$ over $k$ and some $N$.
By finite-dimensionality of $\Gamma$, we may assume that  for all ${\bar a}$,  $\theta({\bar a}, \partial({\bar a}),...,\partial^{(N)}({\bar a}))$ implies  $\partial^{(N)}({\bar a})\in dcl_{L}(k((\nabla^{(N-1)}({\bar a})))$.
By Corollary 6.6 for each ${\bar a}\in \Gamma$, $\partial^{(N+1)}({\bar a})\in dcl_{L}(\nabla^{(N)}({\bar a}))$, so 
equals $f(\nabla^{(N)}({\bar a}))$ for some $L_{k}$-definable  partial function $f$ defined at $\nabla^{(N)}({\bar 
a})$ so defined on the realizations of some $L_{k}$-formula $\psi_{\bar a}(x_{0},...,x_{N})$ in $tp_{L}(\nabla^{N}({\bar a})/k)$. 
Replace $\Gamma$ by  $\nabla^{(N)}\Gamma$.  So the new $\Gamma$ is defined by  $\theta(x_{0},..,x_{N})$ together with $\partial(x_{i}) = x_{i+1}$ for $i=0,..,N-1$.
The new ${\bar x}$ is $({\bar x}_{0},..,{\bar x}_{N})$.
\newline
We let $Y$ be the $L$-definable over $k$ set defined by $\theta({\bar x})$.
\newline
 $\bar a$ will now denote a tuple from  $\Gamma$, so of the form 
${\bar a} = ({\bar a}_{0},...,{\bar a}_{N})$.  And for ${\bar a}$ in  $\Gamma$, $\partial({\bar a}) = ({\bar a}_{1},..,
{\bar a}_{N}, \partial({\bar a}_{N}))$.  Moreover by the previous paragraph, for each such ${\bar a}\in \Gamma$, $
\partial({\bar a}) = s_{{\bar a}}({\bar a})$ for some $L$-definable over $k$ function $s_{\bar a}$ which is defined on 
some formula $\phi_{\bar a}({\bar x})\in tp_{L}({\bar a}/k)$.  We may assume $dim_{L}(\phi_{\bar a}({\bar x}))$ equals $dim_{L}({\bar a}/k)$. 

 Note that for ${\bar a}\in \Gamma$, $\phi_{\bar a}$ and $s_{\bar a}$ depend only on $p = tp_{L_{\partial}}({\bar a}/k)$ so can be written as $\phi_{p}$, $s_{p}$. We let $P$ be the collection of $tp_{L_{\partial}}({\bar a}/k)$, for ${\bar a}\in \Gamma$.

As in the proof of  the first part of 3.3, we have:
\newline
{\em Claim 1.}  For $p\in P$, and ${\bar b}$ satisfying $\theta({\bar x})\wedge \phi_{p}({\bar x})\wedge ``\partial({\bar x}) = s_{p}({\bar x})"$, we have that ${\bar b}\in \Gamma$. 

\vspace{2mm}
\noindent
By compactness, we conclude,
\newline
{\em Claim 2.} There are $p_{1},..,p_{s}\in P$, such that $$\Gamma = \cup_{i=1,..,s}\{{\bar b}: \models \theta({\bar b})\wedge \phi_{p_{i}}({\bar b}) \wedge ``\partial({\bar b}) = s_{p_{i}}({\bar b})"\}.$$ 

\vspace{2mm}
\noindent
This already gives a kind of analogue of part (I) of Proposition 3.3. 
The analogue of part (II) follows by the same proof.  
\newline
{\em Claim 3.} There are partial $L$-definable over $k$ functions $*:Y\times Y \to Y$, and $inv:Y\to Y$ such that $*$ is defined on $\Gamma\times \Gamma$ and is precisely multiplication in $\Gamma$, and such that $inv$ is defined on $\Gamma$ and is precisely inversion on $\Gamma$. 


Note that:
\newline
{\em Claim 4}. For ${\bar a}\in \Gamma$, $dim_{L}({\bar a}/k) = dim_{\partial}({\bar a}/k)$. 

\vspace{2mm}
\noindent
We now will make use of cell decomposition, focusing on the $V_{p_{i}}$ (and $p_{i}$) of maximal $\partial$-dimension.  Let $n= dim_{\partial}(\Gamma) = dim_{L}(Y)$.

Take a $C^{1}$-cell decomposition of $Y$ by $C^{1}$-cells defined over $k$ which is compatible with the collection of formulas $\phi_{p_{i}}({\bar x})$ and definable functions $s_{p_{i}}$.
Let $C_{1},..,C_{r}$ be the cells of maximal dimension $n$. So by Claim 4, each $p_{i}$ of maximal $\partial$-dimension $n$ is in some $C_{j}$, and also $s_{p_{i}}$ is a $C^{1}$-function on $C_{j}$. 
Moreover each $C_{j}$ contains one of the types $p_{i}$ of maximal $\partial$-dimension.

Let us fix for each $j=1,..,r$, some $p_{i}$ which is in $C_{j}$. So $s_{p_{i}}$ is a $C^{1}$-function on $C_{j}$, and we rewrite $s_{p_{i}}$ as $s_{j}$. 

We conclude
\newline
{\em Claim 5.} ${\bar a}\in \Gamma$ is $\partial$-generic in $\Gamma$ over $k$ iff for some $j=1,..,r$, ${\bar a}\in C_{j}$, $dim_{L}({\bar a}/k) = n$ and $\partial({\bar a}) = s_{j}({\bar a})$. 

\vspace{2mm}
\noindent
Fix $j\in\{1,..,r\}$. As $C_{j}$ is an $n$-dimensional cell, let $\pi^{j}$ be a suitable projection from the ambient space 
to $M^{n}$ giving a bijection between $C_{j}$ and some open $L$-definable over $k$ subset $U_{j}$ of $M^{n}$  
(i.e., such that the map $(\pi^{j})^{-1}|U_{j}$ is $C^{1}$). 
By reordering coordinates, for ${\bar a} \in U_{j}$, write $\pi^{j}({\bar a})$ as $(a_{1},..,a_{n})$. We let 
$\pi^{j}(s_{j}): U_{j}\to M^{n}$ denote the $C^{1}$ function on $U_{j}$ taking $(a_{1},..,a_{n})$ to  the first $n$-coordinates of $s_{j}({\bar a})$ where ${\bar a}\in C_{j}$ and $\pi^{j}({\bar a}) = (a_{1},.,a_{n})$. 
With this notation, for  ${\bar a}\in C_{j}$, $s_{j}({\bar a})$ is determined by $\pi(s_{j})(a_{1},..,a_{n})$, using the 
formula in Remark 6.5.  We now identify  $C_{j}$ with $U_{j}$ via $\pi^{j}$ and also identify $s_{j}$ with 
$\pi^{j}(s_{j}):U_{j}\to M^{n}$.
 We conclude:
\newline
{\em Claim 6.} Let $U$ be the disjoint union of the $U_{j}$, and let $s$ be the disjoint union of the $s_{j}$. So (with the identification of $C_{j}$ and $U_{j}$ as above) the $\partial$-generic over $k$ elements of $\Gamma$ are precisely the elements ${\bar a}\in U$ such that $dim_{L}({\bar a}/k) = n$ and $\partial({\bar a}) = s({\bar a})$. 

\vspace{5mm}
\noindent
Both $*$ and $inv$ from Claim 3 induce partial $L_{k}$-definable operations on $U\times U$ and $U$ respectively, which we still call $*$ and $inv$. 

Note that, by the definition of $\partial$-independence we have that:
\newline
{\em Claim 7.} If $g_{1}, g_{2}\in \Gamma$ are $\partial$-generic  over $k$ and $\partial$-independent over $k$ then the product $g_{1}g_{2}$ is $\partial$-generic in $\Gamma$ over $k$ and $\partial$-independent from each of $g_{1}, g_{2}$ over $k$.  Likewise $g_{1}^{-1}\in \Gamma$ is $\partial$-generic over $k$.

Already Claims 6 and 7 give part of the current analogue of Lemma 3.4. The rest is given by:
\newline
{\em Claim 8.}  Let $a,b\in U$  be such that $dim_{L}(a,b/k) = 2n$ (so $a,b$ are $L$-independent $L$-generics of $U$ over $k$). Then there are  $\partial$-generic and $\partial$-independent over $k$, $g_{1}, g_{2}
\in \Gamma$, such that $tp_{L}(a,b)/k) = tp_{L}(g_{1}, g_{2}/k)$. 
\newline
\begin{proof} Suppose $a\in C_{j}$ and $b\in C_{\ell}$. By Lemma 6.7, there is $(g_{1}, g_{2})$ realizing $tp_{L}(a,b)/k)$ with $\partial(g_{1}) = s_{j}(g_{1})$ and $\partial(g_{2}) = s_{\ell}(g_{2})$. So $(g_{1}, g_{2})\in \Gamma\times\Gamma$, and $g_{1}$, $g_{2}$ are $\partial$-generic, $\partial$-independent over $k$  (as these properties depend only on $tp_{L}(g_{1}, g_{2}/k)$ once we know $g_{1}, g_{2}\in \Gamma$). 
\end{proof}

From Claims 6,7,8, we can prove the  analogue of Proposition 3.5 in our current situation. The construction of an enveloping $L$-definable group $G$ proceeds as in Section 4, or is given directly by Section 5.1.  The definable embeddability of $\Gamma$ in $G$ is as in Lemma 4.7.
This completes the
proof of part (i) of Proposition 6.8.

For the rest, by Theorem 5.4, the image of $U$ in $G$ is large, so by Lemma 6.7 and Claim 6 we get (ii) and (iii). 

\subsection{Open theories of topological fields with a generic derivation}
This is a context developed by the third author and collaborators for which the reader is  referred to \cite{G-P} and \cite{K-P}. It subsumes some new examples such as algebraically closed and real closed valued fields.

We first give a brief description of the relevant theories and then proceed to the main theorem and proof in this new context. It will be closer to the material in Section 2, 3,  4 and Section 5.2 than Section 6.1. 

We first fix a one-sorted language $L$, an extension of the language of rings by relation symbols and constant symbols. Let $T$ be a complete $L$-theory extending the theory of large fields of characteristic $0$ such that:
\newline
(1) For some $L$-formula $\phi(x,{\bar y})$, in any model $K$ of $T$, the set of formulas $\phi(x,{\bar a})$ as ${\bar a}$ ranges over tuples from $K$ of the right length, defines a basis for a (Hausdorff) topology on $K$ which makes $K$ into a topological field. When we say open, closed, etc we mean with respect to this topology.
\newline
(2)  For any model $K$ of $T$, and subfield $k$, any $k$-definable subset of $K^{n}$ is a finite union of sets of the form  $Z\cap U$ where $Z$ is a Zariski closed subset of $K^{n}$ defined by polynomials over $k$ and $U$ is an open subset of $K^{n}$ also defined over $k$. 


A certain many-sorted version of this setup-and the assumptions (1), (2),   with a distinguished topological field sort,
was defined in \cite{K-P}, and called an ``open theory of topological fields".  This many-sorted context was developed  so as to include  henselian valued fields of characteristic zero. But already the one-sorted context includes the complete theories $ACVF$ and $RCVF$ (treated as one-sorted structures).  In any case we will use the expression ``open theory of topological fields"  to refer to theories of (enriched) fields we consider, and the reader is free to restrict attention to the one-sorted context.   

We fix such an open theory of (large) topological fields $T$. There is no harm assuming quantifier elimination although this already holds in $ACVF$, $RCVF$ in the natural languages.

It is important to note that  $T$ is a ``geometric theory" (its models are geometric structures), by virtue of definability of dimension.  And $T$ is ``almost" a theory of geometric fields, except that the language $L$ may contain more than the ring language.

 We will typically denote models of $T$ by $K$ (although the language $L$ may be an expansion of the ring language), and a saturated model by ${\mathcal U}$. 
Nevertheless we see that in models of $T$, model-theoretic algebraic closure coincides with field-theoretic algebraic closure.  The dimension we use in models of $T$ will of course be algebraic dimension $dim({\bar a}/k) = trdeg(k({\bar a})/k)$.  Notice that by our assumptions on $T$, especially (2) above we have:
\begin{Remark} Let $K\models T$, $k$ a subfield and ${\bar a}$ an $n$-tuple from $K$. Then $tp({\bar a}/k)$ is 
axiomatized by  the following information ${\bar x}\in V$, where $V$ is the variety over $k$ with $k$-generic point ${\bar a}$, and $\{{\bar x}\in U\}$ where $U$ ranges over all open $k$-definable subsets of $V(K)$ containing ${\bar a}$. 
\end{Remark}

We have independence,  genericity, etc. in models of $T$, coming from $dim$. So for example, if $K\models T$, and $V$ is an irreducible variety over $k\leq K$ with a Zariski-dense set of $K$-points, then $tp({\bar a}/k)$ is a generic type of $V(K)$ if $dim({\bar a}/k) = dim(V)$. 

In our account of the main theorem below (Proposition 6.12) in the context of models of $T$ with a generic derivation, the topology on models of $T$ will not play much of a role. 

We  have the field-theoretic algebraic closure operation on $K$ giving rise to dimension, independence, on top of which is some additional structure which does not affect dimension, algebraic closure etc. 

It is proved in \cite{K-P} and \cite{G-P}, under additional assumptions to get consistency, that the class of existentially closed models of $T\cup\{\partial$ is a derivation\} is elementary with additional axioms: If $p(x_{0},..., x_{n})$ is a polynomial over $k$ (without loss, irreducible) and ${\bar a}$ is a $K$-point on the hypersurface $H_{p}$ defined by $p({\bar x}) = 0$ such that 
$(\partial p/\partial x_{n}) ({\bar a}) \neq 0$, then  for any open neighbourhood $U$ of ${\bar a}$ in $K^{n+1}$ there is $b\in K$ such that $\nabla^{(n)}(b)\in U\cap H_{p}$. 
If $T$ is a complete theory of topological fields for which the topology is induced by an henselian valuation (not necessarily definable) then $T_{\partial}$ is consistent. See Section 2.3 of \cite{K-P} where this is discussed. 
Here $\nabla^{(n)}$ is as we defined earlier.

We call this theory $T_{\partial}$. $T_{\partial}$ is a complete theory in $L_{\partial}$ which has QE if $T$ does (which we will assume). 

To prove our main theorem on finite-dimensional definable groups in this context (namely a finite-dimensional definable group $\Gamma$  in a model $(K,\partial)$ of $T_{\partial}$ definably embeds in a group $G$ definable in the $L$-structure $K$) we need as before a couple of additional facts:
\begin{Fact} (Lemma 3.1.9 of \cite{K-P}).  In models of $T_{\partial}$ the algebraic closure of a set $A$ coincides with the field-theoretic algebraic closure (equals $L$-algebraic closure) of the differential field generated by $A$, and the definable closure of a set $A$ coincides with the $L$-definable closure of the differential field generated by $A$.
\end{Fact}

For the next lemma we make use of rational $D$-varieties as defined above.
From now on we let $({\cal U},\partial)$ be a saturated model of $T_{\partial}$. $k, K,..$ will typically denote small differential subfields. 

\begin{Lemma} Let $(V,s)$ be a rational $D$-variety over $k$ ($V$ affine, irreducible, with a Zariski-dense set of ${\cal U}$-points) over $k$. Let 
$p({\bar x})$ be a complete generic type of $V$ over $k$,  in the sense of ${\mathcal U}\models T$. Then there is a realization  ${\bar a}$ of $p$ such that $\nabla({\bar a}) = s({\bar a})$.
\end{Lemma}
\begin{proof} 
The proof is like that of  Lemma 2.7.  Let ${\bar c}$ realize $p$ in some model $K$ of $T$ containing $k$. So ${\bar c}$ is a generic point of $V$ over $k$.  Apply $s$ to ${\bar c}$ to obtain $({\bar c}, {\bar d})\in T_{\partial}(V)$. The extension theorem for derivations allows us to extend $\partial|k$ to a derivation $\partial^{*}$ of $k({\bar c})$ with $\partial^{*}({\bar c}) = {\bar d}$.  By quantifier elimination of $T_{\partial}$ there is an embedding of the $L_{\partial}$-structure $(k({\bar c}),\partial^{*})$ into $({\mathcal U},\partial)$ over $(k,\partial)$. Let ${\bar a}$ be the image of ${\bar c}$. 

\end{proof}

All the earlier notions of finite-dimensional, $\partial$-independence, $\partial$-generics go over to the new context.

Again we have:
\begin{Theorem} Let $\Gamma$ be a finite-dimensional  group definable over $k$ in $({\mathcal U},\partial)\models T_{\partial}$ where $T$ is an open theory of large topological fields. Then $\Gamma$ is definably (over $k$) embeddable  in a group $G$ definable over $k$ in the $L$-structure ${\mathcal U}$. Remark 4.8 also holds. 
\end{Theorem}
\noindent
{\em Discussion.}   The proof is identical  to that of Theorem 4.1 (or 5.11) going through the same steps, except we replace ``semialgebraic" everywhere by $L$-definable (where we of course still have $dim_{L}$).  
\end{proof}

\end{document}